\newtheorem{theorem}{Theorem}[section]
\newtheorem{claim}[theorem]{Claim}
\newtheorem{lemma}[theorem]{Lemma}
\newtheorem{proposition}[theorem]{Proposition}
\newtheorem{corollary}[theorem]{Corollary}
\newtheorem{definition}[theorem]{Definition}
\begin{document}

\title{Pointwise bounds for Eisenstein series on $\Gamma_0(q)\char`\\SL_2(\mathbb{R})$}
\author[1]{Evgeny Musicantov\thanks{musicantov.evgeny@gmail.com}}
\author[2]{Sa'ar Zehavi\thanks{saarzehavi@gmail.com}}
\affil[1]{The Hebrew University of Jerusalem, Jerusalem, Israel}
\affil[2]{Tel Aviv University, Tel Aviv, Israel}

\maketitle

\begin{abstract}
    We construct pointwise bounds in the weight aspect for Eisenstein series on $X_0(q) = \Gamma_0(q)\char`\\SL_2(\mathbb{R})$, with squarefree level $q$, using a Sobolev technique. More specifically, we show that for an Eisenstein series $E$ on $X_0(q)$ of weight parameter $n$ and type $t$, one has for all $x\in X_0(q)$: $|E(x,1/2 + it)| \ll_{\epsilon} q^{\epsilon}(1 + |n|^{1/2 + \epsilon} + |t|^{1/2 + \epsilon})\sqrt{y(x) + y(x)^{-1}}$, where $y(x)$ is the Iwasawa $y$-coordinate of the point $x$.
\end{abstract}

\section{Introduction}
Let $\Omega$ be a compact, smooth Riemannian manifold, $\Delta$ the Laplace operator, and $f$ a Laplace eigenfunction, i.e. there exists some $\lambda$ such that
\[
-\Delta f = \lambda f.
\]
Since $f$ is smooth (by elliptic theory) and $\Omega$ is compact, $f$ is bounded. Finding an upper bound on the maximal value of $f$ on $\Omega$ in terms of $|\lambda|$ is a classical problem. In quantum mechanics, such upper bounds are a measure for the equidistribution of mass of a high energy particle, which is related to the phenomenon of quantum scarring.

For non-compact manifolds, Laplace eigenfunctions no longer necessarily attain a maximum absolute value. An analogue of the previous problem is studied by the name of the sup-norm problem, which typically asks for an upper bound on the supremum of a function on our manifold, when normalized in a certain way. Other than the quantum mechanical interpretation, the sup-norm problem has connections to the multiplicity problem, zero sets and nodal lines of automorphic functions, and bounds for Faltings’ delta function. See~\cite{RUDNICK},~\cite{MORA},~\cite{NODAL} and~\cite{JORGENSON}.

Our focus in this paper is on a special family of non-compact manifolds, called modular manifolds and on certain Laplace eigenfunctions on such manifolds called Eisenstein series. Modular manifolds are obtained by taking the left quotient of $SL_2(\mathbb{R})$ by a congruence subgroup $\Gamma_0(q)$, or by taking its double quotient by a congruence subgroup from the left, and by a maximal compact subgroup, such as $SO_2(\mathbb{R})$, from the right. The quotient space $SL_2(\mathbb{R})/SO_2(\mathbb{R})$ is called the hyperbolic plane, $\mathbb{H} = \{x + iy\in \mathbb{C}:y > 0\}$, and is equipped with the hyperbolic metric $ds^2 = \dfrac{dx^2 + dy^2}{s^2}$.

The Laplacian on $\mathbb{H}$ is
\[
\Delta_{\mathbb{H}} = y^2(\dfrac{d^2}{dx^2} + \dfrac{d^2}{dy^2}).
\]
Similarly, $SL_2(\mathbb{R})$ is parametrized by the set $\{(x,y,\theta)\in \mathbb{R}\times \mathbb{R}_+\times \mathbb{S}^1\}$. 

Given an element $g \in SL_2(\mathbb{R})$, $g$ can be decomposed uniquely into a product $g = nak$, where 
\[
n\in N =
\left\{
\begin{pmatrix}
1& x\\
0& 1
\end{pmatrix}: x\in\mathbb{R}
\right\},\quad
a\in A =
\left\{
\begin{pmatrix}
y^{1/2}& 0\\
0& y^{-1/2}
\end{pmatrix}: y\in\mathbb{R}_+
\right\},\quad
k\in SO_2(\mathbb{R}) =
\left\{
\begin{pmatrix}
\cos\theta& -\sin\theta\\
\sin\theta& \cos\theta
\end{pmatrix}: \theta\in[0,2\pi]
\right\}.
\]
Given an element $g = nak$, we call $nak$ the Iwasawa decomposition of $g$, and the corresponding $(x,y,\theta)$ are called the Iwasawa coordinates of $g$.

On $SL_2(\mathbb{R})$, we have the Casimir operator, denoted by $\mathfrak{C}$, which is given, in Iwasawa coordinates, by
\[
\mathfrak{C} = -y^2(\dfrac{d^2}{dx^2} + \dfrac{d^2}{dy^2}) + y\dfrac{d}{dx}\dfrac{d}{d\theta}.
\]
There is also a notion of Laplacian on $SL_2(\mathbb{R})$, which is induced by the choice of a metric. For the standard metric $ds^2 = \dfrac{dx^2 + dy^2 + d\theta^2}{y^2}$, the Laplacian is related to the Casimir by
\[
\Delta = -\mathfrak{C} + \dfrac{1}{2}\dfrac{d^2}{d\theta^2}.
\]

Smooth functions on $\Gamma_0(q)\char`\\\mathbb{H}$ lift naturally into smooth functions on $\Gamma_0(q)\char`\\SL_2(\mathbb{R})$ via the pullback of the projection morphism $\pi:\Gamma_0(q)\char`\\SL_2(\mathbb{R})\longrightarrow \Gamma_0(q)\char`\\\mathbb{H}$. Since such functions are independent of the Iwasawa $\theta$ coordinate, the action of the Casimir and the action of the Laplacian coincide on such functions. 

\textbf{Remark}: smooth functions on $\Gamma_0(q)\char`\\SL_2(\mathbb{R})$ independent of $\theta$ are called \textit{spherical}.

\textbf{Remark}: the previous observation implies that the spectrum of the Laplacian on $\Gamma_0(q)\char`\\\mathbb{H}$ injects into the (spherical) spectrum of the Casimir on $\Gamma_0(q)\char`\\SL_2(\mathbb{R})$. 

Both the spectrum of the Casimir on $\Gamma_0(q)\char`\\ SL_2(\mathbb{R})$ and the Laplacian on $\Gamma_0(q)\char`\\ \mathbb{H}$ consist of two types: the discrete and continuous spectrum. The discrete spectrum is comprised of the constant function(s) and cusp forms. The Riemannian 3-fold $\Gamma_0(q)\char`\\ SL_2(\mathbb{R})$ and the 2-fold $\Gamma_0(q)\char`\\ \mathbb{H}$ have cusps, and cusp forms are functions that vanish on those cusps. Maass-cusp forms are square integrable and bounded functions. For a Maass-cusp form $f$ on the modular manifold $X$, we define its sup-norm by 
\[
||f||_{\infty} = \sup_{x\in X}\dfrac{|f(x)|}{||f||_2}.
\]
The supnorm problem for Hecke-Maass cusp newforms on arithmetic quotients of the upper half plane has been widely studied in the literature. Throughout the discussion, let $u_t$ denote such a Hecke-Maass cusp newforms of type $t$ on $\Gamma_0(q)\char`\\\mathbb{H}$, that is with Laplace eigenvalue of $\lambda = 1/4 + t^2$.

Iwaniec and Sarnak (see~\cite{IWASAR}) proved for the level 1 case and Blomer-Holowinsky (see~\cite{BLOMERHOLO}) for the general level case, that for any $\epsilon > 0$,
\[
||u_t||_{\infty} \ll_{q,\epsilon} |t|^{5/12 + \epsilon}.
\]
Blomer and Holowinsky also supplied the first non-trivial bound in the level aspect. For squarefree level $q$
\[
||u_t||_{\infty} \ll_{t} q^{-1/37}.
\]
This bound was subsequently improved by Templier~\cite{TEMP1} and by Templier-Harcos in~\cite{TEMP-HARC1} and~\cite{TEMP-HARC2} to
\[
||u_t||_{\infty} \ll_{t,\epsilon} q^{-1/6 + \epsilon},\quad \forall \epsilon > 0.
\]
It is conjectured that the optimal bound is $||u_t||_{\infty} \ll_{t,\epsilon} q^{-1/2 + \epsilon}$. In~\cite{TEMP2}, Templier obtains the best known hybrid bound, i.e. bound in both level and spectral aspects simultaneously, for squarefree level $q$
\[
||u_t||_{\infty} \ll_{\epsilon} |t|^{5/12 + \epsilon}q^{-1/6 + \epsilon},\quad \forall \epsilon > 0.
\]
The focus of this paper is on the Eisenstein series of $\Gamma_0(q)\char`\\SL_2(\mathbb{R})$ with large weight:

\begin{definition}[Weight $n$ forms]
Let $n\in\mathbb{Z}$. A weight $n$ automorphic form on the modular 3-fold $X_0(q)$, are functions $f:X_0(q)\longrightarrow\mathbb{C}$, with the property that for all 
\[
k = 
\begin{pmatrix}
\cos\theta& -\sin\theta\\
\sin\theta& \cos\theta
\end{pmatrix}\in SO_2(\mathbb{R}),
\]
and all $g\in SL_2(\mathbb{R})$:
\[
f(gk) = e^{in\theta}f(g).
\]
\end{definition}

\textbf{Remark}: we denote the modular 3-fold $\Gamma_0(q)\char`\\SL_2(\mathbb{R})$ by $X_0(q)$ all throughout the paper.

\textbf{Remark}: not all Casimir eigenforms on $X_0(q)$ are weight $n$ forms for any $n\in\mathbb{Z}$. Generally, the sum of two or more forms of different weight will give such an example.

\textbf{Remark}: we shall refer to automorphic forms of integer weight as functions of ``pure-weight".

\textbf{Remark}: spherical functions are weight-0 functions.

Bounds on the absolute values of the pointwise evaluation of Maass cusp forms in the weight aspect have been studied by Bernstein and Reznikov, see~\cite{BERREZ}. Denote by $u_{t,n}(x)$ a Maass cusp form of weight $n$, type $t$ and $L^2$-norm equal to 1. In~\cite[p. 2166]{BERREZ}, Bernstein and Reznikov prove
\[
\forall x\in X_0(q):\quad |u_{t,n}(x)| \ll_{q,\epsilon} (1 + |t|^{1/2 + \epsilon} + |n|^{1/2 + \epsilon})\sqrt{y(x) + y(x)^{-1}},
\]
where $y(x)$ is the Iwasawa $y$-coordinate of $x$. The above bound implies, in particular, that $|||u_{t,n}||_{\infty} \ll_{q,\epsilon} (1 + |t|^{1/2 + \epsilon} + |n|^{1/2 + \epsilon})$.

We introduce our notations. Let $\mathfrak{A}$ be the set of cusps of the modular 3-fold $X_0(q) = \Gamma_0(q)\char`\\SL_2(\mathbb{R})$. We denote by $E_{\mathfrak{a},n}(*,1/2+it)$ the weight $n$ Eisenstein series of the cusp $\mathfrak{a}\in\mathfrak{A}$, with spectral parameter $s = 1/2 + it$, $t\in\mathbb{R}$. The Eisenstein series above, evaluated at a point $x\in X_0(q)$ is written as $E_{\mathfrak{a},n}(x,1/2+it)$, and with abuse of notation, we will consider the same Eisenstein series as a function on $SL_2(\mathbb{R})$ in the standard way (via the pullback of the projection morphism).

The analogue of the sup-norm problem for the weight $0$ (or ``spherical") Eisenstein series was studied by Young, in~\cite[Theorem 1.1]{YOUNG}: Given the unique Eisenstein series $E(z,1/2 + it) = E_{\infty,0}(z,1/2+it)$ of the quotient $SL_2(\mathbb{Z})\char`\\\mathbb{H}$, normalized by the constant term, let $\Omega\subseteq \mathbb{H}$ be any compact subdomain of $\mathbb{H}$, assuming $t \ge 1$ then for any $\epsilon > 0$
\[
\max_{z\in\Omega}|E(z,1/2 + it)| \ll_{\Omega,\epsilon} t^{3/8 + \epsilon}.
\]
At this time, the state of the art is the result of Huang and Xu, who showed~\cite{HUANGXU}.
\begin{theorem}[Huang \& Xu,~\cite{HUANGXU}, Theorem 1.2]
\label{theorem:huang_xu}
Let $z = x + iy\in\mathbb{H}$, $q$ a positive squarefree integer, and $\mathfrak{a}$ a cusp of $\Gamma_0(q)\char`\\\mathbb{H}$. For $t\ge 1$, denote by $E_{\mathfrak{a}}(*,1/2 + it)$ the Eisenstein series of the cusp $\mathfrak{a}$ with spectral parameter $1/2 + it$, normalized by the constant term, then for all $\epsilon > 0$
\[
E_{\mathfrak{a}}(z,1/2 + it) = \delta_{\mathfrak{a}=\infty}y^{1/2 + it} + \phi_{\mathfrak{a},\infty}(1/2 + it)y^{1/2 - it} + O_{\epsilon}(q^{-1/2 + \epsilon}(y^{-1/2} + t^{3/8 + \epsilon})).
\]
\end{theorem}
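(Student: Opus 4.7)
The plan is to derive the bound directly from the Fourier expansion of $E_{\mathfrak{a}}(z,1/2+it)$ at the cusp $\infty$. For $s=1/2+it$ one has
\[
E_{\mathfrak{a}}(z,s) = \delta_{\mathfrak{a}=\infty}\, y^{s} + \phi_{\mathfrak{a},\infty}(s)\, y^{1-s} + \frac{2\sqrt{y}}{\Gamma(s)}\sum_{n\neq 0}\phi_{\mathfrak{a},n}(s)\,|n|^{s-1/2}\,K_{s-1/2}(2\pi|n|y)\,e(nx),
\]
so the two ``main terms'' of the statement are already isolated, and the remaining task is to estimate the non-constant contribution by $O_{\epsilon}(q^{-1/2+\epsilon}(y^{-1/2}+t^{3/8+\epsilon}))$.

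The first step is to extract the level savings. For squarefree $q$, the Fourier coefficients $\phi_{\mathfrak{a},n}(1/2+it)$ admit an explicit expression as a Ramanujan-type sum over the scaling matrix of $\mathfrak{a}$, and a standard factorization shows that they equal $q^{-1/2}$ times a multiplicative coefficient bounded by $(|n|q)^{\epsilon}$. This produces the $q^{-1/2+\epsilon}$ prefactor and reduces matters to bounding
\[
\sqrt{y}\sum_{n\neq 0} c_{n}(t)\,|n|^{it}\,K_{it}(2\pi|n|y)\,e(nx), \qquad c_{n}(t)\ll_{\epsilon} |n|^{\epsilon},
\]
which is essentially the level-$1$ sup-norm problem that Young treated in~\cite{YOUNG}. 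Up to this point the argument is arithmetic bookkeeping, recording how the local factors at primes dividing $q$ contribute exactly the square-root saving.

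The second step is Young's analysis of the resulting oscillatory sum. A dyadic split according to the three regimes of $K_{it}(2\pi|n|y)$ is natural: (i) the range $|n|\gtrsim |t|/y$, where the Bessel kernel decays super-polynomially and contributes negligibly; (ii) the range $|n|\lesssim 1/y$, where trivial bounds together with $K_{it}(u)\ll u^{-1/2}$ for small $u$ already produce the $y^{-1/2}$ term; (iii) the transition range $1/y\lesssim |n|\lesssim |t|/y$, where one replaces $K_{it}$ by its uniform stationary-phase expansion to rewrite the contribution as an exponential sum with phase $t\log(t/(2\pi|n|y))$ modulated by a smooth amplitude. In ranges (i) and (ii) the estimates are routine and match the claim.

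The main obstacle is the transition range (iii), which demands genuine subconvex cancellation. Here I would invoke the delta-symbol method of Young: rewrite the smoothed exponential sum via a $\delta$-expansion or Voronoi summation to produce dual Kloosterman-type sums whose moduli are comfortably smaller than $t$, then bound these using the Weil bound together with stationary phase on the resulting oscillatory integrals. This is precisely the mechanism that converts the trivial exponent $1/2$ into $3/8$, and since the $q$-dependence has already been factored out in the first step, the argument transfers to the reduced sum without loss of uniformity. Collecting the three dyadic estimates and multiplying by the previously extracted $q^{-1/2+\epsilon}$ prefactor yields the stated bound.
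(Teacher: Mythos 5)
You should note first that the paper does not prove this statement at all: Theorem~\ref{theorem:huang_xu} is quoted from Huang and Xu purely as a point of comparison for the main theorem, so your proposal has to stand on its own as a proof of their result, and as written it does not. The two steps you label as routine are in fact the substance of the theorem. The claim that the Fourier coefficients at an arbitrary cusp $\mathfrak{a}$ of $\Gamma_0(q)$ are $q^{-1/2}$ times a factor of size $(|n|q)^{\epsilon}$ is not mere bookkeeping: with the constant-term normalization the coefficients are Ramanujan-type sums attached to the cusp $\mathfrak{a}=v$ (with $vw=q$), divided by zeta-type factors, and obtaining the $q^{-1/2+\epsilon}$ saving uniformly in the cusp and in $n$ is one of the main contributions of Huang--Xu. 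Moreover the Bessel estimate you use in range (ii) is not correct: for $u\ll t$ one has, after the $\Gamma(s)^{-1}$ normalization, that the Bessel factor $e^{\pi t/2}K_{it}(u)$ is of size roughly $(t^2-u^2)^{-1/4}\approx t^{-1/2}$ (oscillating), not $u^{-1/2}$, so the ``routine'' ranges do not literally produce the $y^{-1/2}$ term as you describe, and the uniformity in $y$ (Young's level-one theorem is stated only for $z$ in a fixed compact $\Omega$) has to be re-derived rather than inherited.

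More seriously, the analytic heart of the theorem --- the exponent $3/8$ in the transition range $1/y\lesssim |n|\lesssim t/y$ --- is not proved but invoked, and the mechanism you cite is not the one that yields it. Young's argument (and Huang--Xu's extension) does not pass through a delta-symbol or Voronoi expansion into Kloosterman sums bounded by Weil; it exploits the explicit divisor structure $\sigma_{2it}(n)|n|^{-it}$ of the coefficients, the uniform asymptotics of $K_{it}$ near the transition point, and cancellation in the resulting exponential sums with phase $t\log(\cdot)$ (subconvexity-type input tied to $\zeta(1/2+it)$ and van der Corput estimates); the exponent $3/8$ emerges from that specific balance. Deferring exactly this step to a one-line citation, and to a different mechanism at that, leaves the stated bound unestablished. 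If the intent is only to use the result, it should simply be cited, as the paper does; if the intent is to prove it, the transition-range analysis and the cusp-uniform treatment of the coefficients must be carried out in detail.
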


Although Huang and Xu's dependence on $q$ seems optimal, and gives an improvement in the spectral aspect (similar to Young's), their theorem says nothing about the non-spherical case (non-zero weight).

We will show:
\begin{theorem}[Main Theorem]
\label{theorem:main_theorem}
Let $g\in SL_2(\mathbb{R})$ have Iwasawa coordinates $x,y,\theta$, let $q$ be a positive squarefree integer, $n\in 2\mathbb{Z}$, and $\mathfrak{a}$ be a cusp of $X_0(q)$. For $t\in\mathbb{R}$, denote by $E_{\mathfrak{a},n}(*,1/2 + it)$ the Eisenstein series of the cusp $\mathfrak{a}$, weight $n$, with spectral parameter $1/2 + it$, normalized by the constant term. Then
\[
|E_{\mathfrak{a},n}(g,1/2 + it)| \ll_{\epsilon} q^{\epsilon}\left(1 + |n|^{1/2 + \epsilon} + |t|^{1/2 + \epsilon}\right)\sqrt{y + y^{-1}},\quad \forall 0 < \epsilon < 1/2.
\]
\end{theorem}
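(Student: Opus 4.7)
The plan is to adapt the Sobolev technique of Bernstein-Reznikov, stated in \cite{BERREZ} for Maass cusp forms, to the non-$L^2$ setting of Eisenstein series. The conceptual obstacle is that $E_{\mathfrak{a},n}(\cdot,1/2+it)$ is not in $L^2(X_0(q))$, so a local Sobolev bound of the form $|f(g)|\ll\|f\|_{L^2}$ is not directly applicable. The workaround is to subtract the constant terms at each cusp above a fixed truncation height $T$, giving a decomposition
\[
E_{\mathfrak{a},n}(\cdot,1/2+it) = E^{\mathrm{CT}} + E^{\#},
\]
with $E^{\#}\in L^2(X_0(q))$, and to treat the two summands separately.

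For the explicit piece $E^{\mathrm{CT}}$: at each cusp $\mathfrak{b}\in\mathfrak{A}$ with scaling matrix $\sigma_{\mathfrak{b}}$, the contribution of $E^{\mathrm{CT}}$ in Iwasawa coordinates adapted to $\mathfrak{b}$ is
\[
E^{\mathrm{CT}}(\sigma_{\mathfrak{b}} n_x a_y k_\theta) = \eta(y)\bigl(\delta_{\mathfrak{a}=\mathfrak{b}}\,y^{1/2+it}+\phi_{\mathfrak{a}\mathfrak{b},n}(1/2+it)\,y^{1/2-it}\bigr)e^{in\theta},
\]
for a smooth cutoff $\eta$ supported in $y\ge T$. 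Combining the polynomial bound $|\phi_{\mathfrak{a}\mathfrak{b},n}(1/2+it)|\ll q^\epsilon$ on the scattering matrix entries, the bound $|\mathfrak{A}|=2^{\omega(q)}\ll q^\epsilon$ on the number of cusps, and the fact that the $y$-coordinate at any cusp is controlled by $y(g)+y(g)^{-1}$, one gets $|E^{\mathrm{CT}}(g)|\ll q^\epsilon\sqrt{y(g)+y(g)^{-1}}$ pointwise.

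For the remainder $E^{\#}$: up to a smooth perturbation supported in the truncation annuli (from $\eta'$), this is a weight-$n$ Casimir eigenfunction of eigenvalue $1/4+t^2$, and the Maass-Selberg relations give $\|E^{\#}\|_{L^2(X_0(q))}\ll q^\epsilon$. Since the Bernstein-Reznikov proof is local and only requires that $f$ be an $L^2$ weight-$n$ Casimir eigenfunction, applying their bound to $E^{\#}$ yields
\[
|E^{\#}(g)|\ll_\epsilon q^\epsilon\bigl(1+|n|^{1/2+\epsilon}+|t|^{1/2+\epsilon}\bigr)\sqrt{y(g)+y(g)^{-1}};
\]
the factor $\sqrt{y+y^{-1}}$ comes from the volume distortion between hyperbolic Iwasawa coordinates and the local Euclidean unit ball as $g$ approaches a cusp. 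Adding the two bounds gives the theorem.

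The main obstacle will be establishing the uniform estimate $\|E^{\#}\|_{L^2(X_0(q))}\ll q^\epsilon$ via Maass-Selberg, which itself depends on the polynomial bound $|\phi_{\mathfrak{a}\mathfrak{b},n}(1/2+it)|\ll q^\epsilon$ uniformly in $q$, $n$, $t$. Proving this requires an explicit computation of the Fourier expansion of $E_{\mathfrak{a},n}$ at each cusp, together with bounds on the resulting Ramanujan and character sums at squarefree modulus; once this input is in hand, the Sobolev step proceeds in parallel to \cite{BERREZ}, and the $\sqrt{y+y^{-1}}$ factor emerges uniformly in $g$.
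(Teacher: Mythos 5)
Your decomposition into a constant-term piece plus an $L^2$ remainder, the pointwise bound on the constant terms via unitarity of the scattering matrix, and the use of Maass--Selberg to get $\|\Lambda^T E\|_{L^2}\ll_\epsilon q^\epsilon$ all match the paper's strategy. The genuine gap is the sentence ``the Bernstein--Reznikov proof is local and only requires that $f$ be an $L^2$ weight-$n$ Casimir eigenfunction, so apply their bound to $E^{\#}$.'' This is not true, for two reasons. First, $E^{\#}$ is not a Casimir eigenfunction: sharp truncation destroys smoothness on the truncation boundary, and your smooth cutoff introduces an inhomogeneous term of size roughly $(1+|t|)^2\sqrt{T}$ in the annulus, which you never control (nor do you arrange, as the paper does by letting $T$ depend on $h(x)$, that the evaluation point stays away from the annulus). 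Second, and more fundamentally, the Bernstein--Reznikov argument is not a local elliptic/Sobolev estimate applied to one function: it is global and representation-theoretic. It requires the vector to sit inside a $\rho$-invariant irreducible subspace $V\subseteq L^2$, forms the trace $\sum_n P(e_n)/Q(e_n)$ over the whole weight basis of $V$, and crucially uses that the $L^2$ norm is invariant under right translation to produce a ball of definite volume on which $\|I_y\|_Q$ is comparable to $\|I_{x_0}\|_Q$. A purely local Sobolev embedding on the $3$-fold needs more than $3/2$ derivatives and would only yield an exponent like $(\lambda+n^2)^{3/4+\epsilon}$, i.e.\ $(|t|+|n|)^{3/2+\epsilon}$, far from the claimed $1/2+\epsilon$; the sharp exponent is exactly what the averaging over the representation buys.

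For the truncated object both prerequisites fail: right translates of truncated Eisenstein series are not truncated Eisenstein series, and the truncated norm is not right-shift invariant because $\Lambda^T\rho(g)\neq\rho(g)\Lambda^T$. Repairing this is the actual content of the paper: one defines the norms $P$, $R$, $Q$ through the truncation, and then must prove $\|\rho(g)\|_P\ll 1$ for $g$ in a small rectangle $R(\delta)$ (Claim~\ref{claim:right_shift_bound}), which is done by dominating the $T$-truncated norm of a translate by the $2T$-truncated norm and bounding the ratio of the constant-term integrals $\int_1^{T'}|1+\alpha(n,1/2+it)\phi_{\mathfrak{a},\mathfrak{a}}(1/2+it)y^{-2it}|^2\,dy/y$ over $\int_1^{T}(\cdots)\,dy/y$ uniformly in $n$ and $t$; this needs the explicit scattering matrix for squarefree $q$, the Maass--Selberg relations, and lower bounds on $P(e_n)$ with a case analysis in $|t|$ (Claims~\ref{claim:reduction_to_constant_term_ratio}--\ref{claim:s_and_I_relations} and~\ref{claim:p_norm_lower_bound}), together with the interpolation step for $Q$ and the $R$-norm comparison (Claims~\ref{claim:real_q_norm_right_shift_bound} and~\ref{claim:q_norm_right_shift_bound}). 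Your proposal identifies the correct arithmetic inputs (scattering matrix bounds, Maass--Selberg) but assigns them only to the $L^2$ bound, while the harder place they are needed --- making the Sobolev/trace machinery run at all for a non-invariant truncated norm --- is precisely the step your outline treats as free.
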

Although Theorem~\ref{theorem:huang_xu} is better for weight $n = 0$, we need a version with varying weight for an arithmetic application. In~\cite{MUSICZEHAVI}, the authors studied an arithmetic problem involving the distribution of the roots of a quadratic polynomial to prime moduli, which requires such uniform bounds on the nonspherical Eisenstein series, which is the main motivation behind this work.

All other authors' results cited above (except for Bernstein and Reznikov's) make use of the amplification technique already exhibited in~\cite{IWASAR}. Our result uses a completely different technique, using Sobolev norms, originating in the work of Bernstein and Reznikov~\cite{BERREZ}, who prove an analogue bound to ours for Maass cusp forms in a somewhat greater generality. It should also be mentioned that the nonspherical spectrum arises organically in Bernstein and Reznikov's representation theoretic approach.

In Appendix~\ref{appendix:blomer}, we sketch an alternative proof of Theorem~\ref{theorem:main_theorem} provided by Valentin Blomer, using the approximate functional equation.

\subsection{A high level comparison of the Bernstein-Reznikov paper and ours}
The reader who is acquainted with Bernstein and Reznikov's result and/or with the language of representation theory may ask itself why does Bernstein and Reznikov's result not apply verbatim to the case of the Eisenstein series. This will be discussed from the ground up in the next Chapter, so the reader who is unacquainted with the jargon is advised to skip the following discussion in his or her first reading and jump directly to the next chapter.

Broadly speaking, Bernstein and Reznikov use a Sobolev technique, which takes as input an irreducible representation, an embedding of the representation into the $L^2$-space of the modular variety, and a pair of norms satisfying certain properties, among which, one wants the the norm of right translations coming from a small ball around the identity would be bounded. As their pair of norms, Bernstein and Reznikov choose the standard norm (one which comes from the standard Haar measure), and another norm, coming from the Lie algebra. The first norm has the property of being translation invariant, while the second norm has the property of having the right translation norm (relative to it) very easily bounded. In fact, it reduces to the norm of the adjoint action on the Lie algebra.

Since the Eisenstein series is not square integrable, we consider the truncated Eisenstein series as its $L^2$ embedding. The truncated Eisenstein series is square integrable and has square integrable Lie derivatives, which allows us to construct a pair of norms in an analogue way to that of Bernstein and Reznikov. However, since right translations and truncations don't commute, our standard norm is no longer right shift invariant. It happens to be that a great deal of this paper is dedicated to bounding the norm of right translations coming from a small ball around the identity relative to the standard norm for the truncated Eisenstein series. In particular, this is where we require some arithmetic information regarding the constant term of the Eisenstein series, and is the reason why our work, as opposed to Bernstein and Reznikov's, do not generalize immediately to non-arithmetic quotients of $SL_2(\mathbb{R})$ for which there are no known explicit expressions for the constant terms of Eisenstein series.

On the topic of bounding the norm of right translations coming from a small ball around the identity relative to the standard norm, it is of interest to say that our key insight is that we bound the standard norm of a truncated \textit{translated} Eisenstein series with truncation parameter $T$ using the standard norm of a truncated \textit{non-translated} Eisenstein series with larger truncation parameter, $2T$. Abstractly speaking, one could think about a 1-parameter family of norms over an irreducible Eisenstein representations $V$, which are the standard norms of their truncated embeddings with truncation parameter $T$, indexed by the truncation parameter $T\ge 1$, denoted $||\cdot||_T$. If we denote the right translation operator by $\rho$, ($\rho:SL_2(\mathbb{R})\longrightarrow GL(L^2(\Gamma_0(q)\char`\\SL_2(\mathbb{R})))$), then we managed to prove that, at least for $g\in SL_2(\mathbb{R})$ close enough to the identity, one has for all $v\in V$:
\[
||\rho(g)v||_T \ll ||v||_{2T}.
\]
A question of interest, posed by Reznikov, is whether there exists some function $f:SL_2(\mathbb{R})\longrightarrow \mathbb{R}_{\ge 1}$, with $T(I) = 1$,
having the property that
\[
||\rho(g)v||_T \ll ||v||_{T\cdot f(g)}.
\]
It would probably be even more interesting if one could find such a function that is only dependent on, say, the hyperbolic distance of $g$ from the identity, or any other ``nice" norm on $SL_2(\mathbb{R})$.

\subsection*{Acknowledgements}
This research was supported by the European Research Council (ERC) under the European Union's Horizon 2020 research and innovation programme (Grant agreement No. 786758)

The authors would like to thank Ze\'{e}v Rudnick for his invaluable support, suggestions and very many discussions on this problem, as well as to Andre Reznikov for taking interest in this project and for very illuminating conversations and helpful suggestions. The authors would also like to thank Valentin Blomer for helpful remarks and the alternative proof idea using the approximate functional equation (see Appendix~\ref{appendix:blomer}).

\section{Bernstein and Reznikov's Sobolev technique}
\label{appendix:bernstein_reznikov}
In~\cite{BERREZ}, Bernstein and Reznikov construct Sobolev type bounds on square integrable automorphic forms. Roughly speaking, given a square integrable automorphic form $f$ on $X_0(q)$, and a point $x\in X_0(q)$,
they are able to produce a bound on $|f(x)|$ in terms of the standard norm of $f$ and the norm of its Lie derivatives. While this idea applies seamlessly for cuspidal eigenforms, since these are square integrable functions with square integrable Lie derivatives, it requires some adaptation in order to work for the Eisenstein series.

Before we discuss how we bypass the issue coming from the non-square integrability of the Eisenstein series, we describe how Bernstein and Reznikov's technique applies in the square integrable case.

\subsection{An overview of the Bernstein and Reznikov technique (the square integrable case)}
This Chapter will adhere to the language of representation theory used by Bernstein and Reznikov.

Denote by $X_0(q)$ the space $\Gamma_0(q)\char`\\SL_2(\mathbb{R})$. Let $V$ be a vector space of smooth, square integrable functions over $X_0(q)$. Let $I_x$ be the ``evaluation at $x$" functional, i.e., for $f\in V$, and $x\in X_0(q)$ we define $I_x(f) := f(x)$.

\begin{definition}[The $N$-norm of $I_x$]
Let $N$ be some arbitrary norm on $V$, the $N$-norm of $I_x$ is given by
\[
||I_x||_N = \sup_{v\in V}\dfrac{|I_x(v)|}{||v||_N}.
\]
\end{definition}

\begin{definition}[The P-Norm]
Let $v\in V$, we define the $P$-norm of $v$, denoted $P(v)$, by
\[
P(v) = \int_{X_0(q)}|I_x(v)|^2d\chi(x).
\]
\end{definition}
\textbf{Remark}: when $I_x$ is the pointwise evaluation functional, $P(v)$ is equal to the standard norm on $L^2(X_0(q),\chi)$.
\begin{definition}[The Norm Trace]
Let $P$ be the Hermitian norm defined by the functional $I_x$, and let $Q$ be another Hermitian norm on $V$. Let $(e_n)_{n=1}^{\infty}\subseteq V$ be a basis of $V$ that is orthogonal relative to the norm $Q$. The Norm Trace is
\[
tr(P|Q) = \sum_n \dfrac{P(e_n)}{Q(e_n)}.
\]
\end{definition}
\textbf{Remark}: we do not assume here that the trace is finite, but we will be interested only in pairs of norms for which it will be.

The idea is that the quantity $||I_x||_Q$ may be bounded in terms of the number $tr(P|R)$ as we are about to see. This is done in virtue of The Norm Trace Formula.

\begin{theorem}[The Norm Trace Formula, see~\cite{BERREZ}, Proposition 1.2]
\label{theorem:norm_trace_formula}
Let $(e_n)_{n}\subseteq V$ be a basis of $V$ that is orthogonal relative to the norm $Q$. The Norm Trace Formula (relative to the norms $P$ and $Q$) is the identity
\[
\sum_n \dfrac{P(e_n)}{Q(e_n)} = \int_{X_0(q)}||I_x||_{Q}^2d\chi.
\]
\end{theorem}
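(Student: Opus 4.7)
The plan is to unpack the right-hand side $\int_{X_0(q)} \|I_x\|_Q^2\,d\chi$ by invoking Riesz representation with respect to the Hermitian form $Q$, and then to interchange integration and summation. Specifically, for every $x$ at which $\|I_x\|_Q$ is finite, $I_x$ is a continuous linear functional on the pre-Hilbert space $(V,Q)$, so Riesz representation produces a vector $r_x$ (in $V$ or its $Q$-completion) with
\[
I_x(v) = \langle v, r_x\rangle_Q \qquad \text{for all } v\in V, \qquad \|I_x\|_Q^2 = Q(r_x).
\]

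Next, I would normalize the given basis, setting $\tilde e_n = e_n/\sqrt{Q(e_n)}$ so that $(\tilde e_n)$ is a $Q$-orthonormal basis of the $Q$-completion of $V$. Expanding $r_x$ against this basis and applying Parseval yields
\[
r_x = \sum_n \langle r_x, \tilde e_n\rangle_Q\, \tilde e_n = \sum_n \overline{I_x(\tilde e_n)}\, \tilde e_n, \qquad \|I_x\|_Q^2 = Q(r_x) = \sum_n |I_x(\tilde e_n)|^2 = \sum_n \frac{|I_x(e_n)|^2}{Q(e_n)}.
\]
This is the pointwise form of the identity: it decomposes the operator norm $\|I_x\|_Q^2$ as a positive sum of evaluations of the orthogonal basis vectors.

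Finally, I would integrate both sides against $d\chi$ and swap sum with integral using Tonelli's theorem, which is legitimate because every summand $|I_x(e_n)|^2/Q(e_n)$ is non-negative and measurable in $x$:
\[
\int_{X_0(q)} \|I_x\|_Q^2\,d\chi(x) = \sum_n \frac{1}{Q(e_n)} \int_{X_0(q)} |I_x(e_n)|^2\,d\chi(x) = \sum_n \frac{P(e_n)}{Q(e_n)},
\]
using the definition $P(v) = \int_{X_0(q)} |I_x(v)|^2\,d\chi$ in the last equality. This gives the claimed identity.

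The only subtle point, and the one place where care is required, is the justification of Riesz representation: one needs $I_x$ to be $Q$-bounded for almost every $x$, and ideally for $r_x$ to lie in $V$ itself rather than only in its completion. Both are effectively part of the hypothesis that either side of the formula is finite; if the trace on the left is infinite, the identity is still true as an equality of extended non-negative reals, because Tonelli applies unconditionally. In the application to truncated Eisenstein series, finiteness of the trace will be verified independently via bounds on $P(e_n)$ and $Q(e_n)$, at which point no analytic technicality remains.
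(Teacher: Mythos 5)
Your proposal is correct and is essentially the argument behind the cited result: the paper itself does not reprove this identity (it imports it from Bernstein--Reznikov, Proposition 1.2), and the proof there is exactly your combination of the pointwise Parseval identity $\|I_x\|_Q^2 = \sum_n |I_x(e_n)|^2/Q(e_n)$ for the $Q$-orthogonal basis with Tonelli's theorem and the definition of $P$. The only remark worth adding is that since $V$ here is the algebraic span of the pure-weight Eisenstein series, the pointwise identity holds unconditionally in $[0,\infty]$ by Cauchy--Schwarz/Bessel, so even the Riesz-representation caveat you flag can be dispensed with.
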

\textbf{Remark}: Although the definition of the Norm Trace was contingent on a certain choice of basis for $V$, the Norm Trace Formula shows that it is independent of the choice of the basis.

Now, let $x_0 \in X_0(q)$ be our point of interest. Denote by $S(x_0,d)$ the subset of $X_0(q)$ containing $x_0$, such that for all $y\in S(x_0,d)$ the following inequality holds.
\[
||I_y||_Q^2 \ge \dfrac{||I_{x_0}||_Q^2}{d}.
\]
\textbf{Remark}: Since the $Q$-norm of the pointwise evaluation functional is always lower semi-continuous, for all $d > 1$, the set $S(x_0,d)$ is of positive measure.

In order to bound $||I_{x_0}||_{Q}^2$, we utilize the positivity of the right hand side and lower bound the integral over $X_0(q)$ by an integral over the set $S(x_0,d)$. This is done as follows.

\[
tr(P|Q) = \int_{X_0(q)}||I_x||_{Q}^2 d\chi \ge \int_{S(x_0,d)}||I_x||_{Q}^2 d\chi \ge \int_{S(x_0,d)}\dfrac{||I_{x_0}||_{Q}^2}{d} = ||I_{x_0}||_{Q}^2\dfrac{\chi(S(x_0,d))}{d}.
\]

We obtain Bernstein and Reznikov's pre-Sobolev bound.
\begin{lemma}[pre-Sobolev bound, see~\cite{BERREZ} Theorem 1.1]
\label{lemma:pre_sobolev}
Assume $d > 1$, then
\[
||I_{x}||_Q^2 \le \dfrac{d}{\chi(S(x,d))}tr(P|Q).
\]
\end{lemma}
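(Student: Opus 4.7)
The plan is to derive the inequality as a direct rearrangement of the Norm Trace Formula (Theorem~\ref{theorem:norm_trace_formula}) combined with a simple positivity argument localized to the set $S(x,d)$. The key observation is that the problem of bounding $||I_x||_Q^2$ from above is equivalent to bounding a suitable integral of the function $y\mapsto ||I_y||_Q^2$ from below, and the Norm Trace Formula identifies that integral with $tr(P|Q)$.

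First I would apply Theorem~\ref{theorem:norm_trace_formula} to write
\[
tr(P|Q) = \int_{X_0(q)}||I_y||_{Q}^2\, d\chi(y).
\]
Since the integrand is pointwise nonnegative, I can restrict the domain of integration to the subset $S(x,d)\subseteq X_0(q)$ without increasing the value, obtaining $tr(P|Q)\ge \int_{S(x,d)}||I_y||_{Q}^2\, d\chi(y)$. Next I would invoke the defining property of $S(x,d)$: for every $y\in S(x,d)$, the inequality $||I_y||_Q^2\ge ||I_x||_Q^2/d$ holds by construction. Substituting this pointwise lower bound into the restricted integral yields
\[
tr(P|Q)\ge \frac{||I_x||_Q^2}{d}\cdot \chi(S(x,d)).
\]
Finally, since $d>1$, the remark following the definition of $S(x,d)$ guarantees $\chi(S(x,d))>0$, so dividing through and rearranging produces the stated bound $||I_x||_Q^2 \le \frac{d}{\chi(S(x,d))} tr(P|Q)$.

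There is no genuine obstacle here: the lemma is essentially a packaging of the Norm Trace Formula into a pointwise form via the mean value trick of restricting to a sublevel set. The conceptual content lies entirely in Theorem~\ref{theorem:norm_trace_formula} and the lower semicontinuity of $y\mapsto ||I_y||_Q$ (which ensures $S(x,d)$ has positive measure); the present lemma only harvests a usable inequality from these ingredients.
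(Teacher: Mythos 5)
Your proof is correct and follows exactly the paper's argument: apply the Norm Trace Formula, restrict the integral to $S(x,d)$ by positivity of the integrand, insert the defining lower bound $||I_y||_Q^2\ge ||I_x||_Q^2/d$, and rearrange using the positivity of $\chi(S(x,d))$. No differences worth noting.
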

From the pre-Sobolev bound we derive the inequality, valid for every $v\in V$ and $d > 1$:
\[
|v(x)|^2 \le ||v||_Q^2\dfrac{d}{\chi(S(x,d))}tr(P|Q).
\]
We would like to apply a similar inequality when $v$ is an Eisenstein series of type parameter $t$ and pure even weight $n$. In practice, our bound (the right hand side) would be dependant on our choices of the ambient vector space $V$, the functional $I_x$ and the norm $Q$. In the next section we specialize this abstract bound for specific choices of $V$, $I_x$ and $Q$.

\section{Specializing the pre-Sobolev bound}
\label{section:specialized}
\subsubsection{Fixing $V$}
From this point onward, we assume $q$, the level parameter, is a squarefree integer for reasons which would become clear in Chapter~\ref{section:norm_bound}.

For Bernstein and Reznikov's analysis to work, one requires that the space $V$ is closed under right translations. What this means is that given an automorphic form $f\in V$, and some $g\in SL_2(\mathbb{R})$, the right translate of $f$ by $g$, denoted $\rho(g)(f):X_0(q)\longrightarrow\mathbb{C}$, defined by
\[
\forall h\in SL_2(\mathbb{R}):\quad (\rho(g)f)(h) = f(hg),
\]
is also an element of $V$, i.e. $\rho(g)f\in V$. The smallest possible choices for such vector spaces $V$ are the irreducible representations. These are vector spaces closed under translations that do not have any proper subspaces that are also closed under translations.

It is a fact that the span of the even pure-weight Eisenstein series $E_{\mathfrak{a},2n}(*,1/2 + it)$ forms such an irreducible representation. We denote it by $V^t$, and when there is no confusion simply by $V$, i.e.
\[
V^t = \text{span}_{\mathbb{C}}\{E_{\mathfrak{a},n}(*,1/2 + it)\}_{n\in 2\mathbb{Z}}.
\]
\textbf{Remark}: it is convenient that the even pure-weight Eisenstein series form an irreducible representation, since it means that we can use the same representation $V^t$ to construct point-wise bounds for all pure even-weight Eisenstein series $E_{\mathfrak{a},n}(g,1/2 + it)$ sharing the same type parameter $t$ simultaneously.

\subsubsection{Fixing $I_x$}
Now that we fixed our (irreducible) representation $V$, we would like to construct our functional $I_x:V\longrightarrow\mathbb{C}$. Although we are interested in bounding $|v(x)|$ for $v\in V$ and $x\in X_0(q)$, simply choosing $I_x$ to be the pointwise evaluation functional from the previous Chapter won't work, since then, as in the previous Chpater, the norm $P$ would become the standard norm. This is bad because the Eisenstein series is not square integrable.

The idea is to fix $I_x$ to be the composition of the pointwise evaluation functional with Selberg's truncation operator. The ``truncated Eisenstein series" are functions that are equal to the Eisenstein series up to some height, but are then equal to the Eisenstein series minus its constant term for larger heights, i.e. near the cuspidal zones. We define this rigorously below. The truncated Eisenstein series have the advantage of being square integrable. Before we define the truncation operator we require a few definitions.

We require a definition of height.
\begin{definition}[The height function $h(g)$]
\label{definition:height_function}
Let $g\in SL_2(\mathbb{R})$, we define its height to be
\[
h(g) = \max_{\mathfrak{a}}\max_{\gamma\in\Gamma_0(q)}\{y(\sigma_{\mathfrak{a}}^{-1}\gamma g)\},
\]
where $\sigma_{\mathfrak{a}}$ is the scaling matrix sending the cusp at $\infty$ to $\mathfrak{a}$ (see~\cite[p.40, 2.1]{IWANIECBOOK} for a definition of the scaling matrix), and $y(\sigma_{\mathfrak{a}}^{-1}\gamma g)$ is the Iwasawa $y$-coordinate of $\sigma_{\mathfrak{a}}^{-1}\gamma g$.
\end{definition}

\textbf{Remark}: if $h(g) > 1$, then the maximum over the cusps $\mathfrak{a}$ is obtained for a unique cusp. In this situation we say that $g$ is in the cuspidal zone of that cusp.

Let $g = (x,y,\theta)$ be the Iwasawa coordinates of $g$. Fix $n\in 2\mathbb{Z}$ and $t\in\mathbb{R}$. Then if $\sigma_{\mathfrak{b}}$ is the scaling matrix (taking the cusp at $\infty$ to the cusp at $\mathfrak{b}$), then the Fourier expansion of $E_{\mathfrak{a},n}(\sigma_{\mathfrak{b}}g,1/2 + it)$ in the variable $x$ is given by
\[
E_{\mathfrak{a},n}(\sigma_{\mathfrak{b}}g,1/2 + it) = \sum_ma_{\mathfrak{a},\mathfrak{b},m,n}(y,t)e(mx)e^{in\theta}.
\]

\begin{definition}[The truncation operator]
Let $f \in V$ be an Eisenstein series, and $\mathfrak{a}\in\mathfrak{A}$ be an arbitrary cusp of $X_0(q)$. Denote by
\[
f(\sigma_{\mathfrak{a}}g) = \sum_{m}a_{m,\mathfrak{a}}(y,\theta)e(mx)
\]
the Fourier expansion of $f$ at the cusp $\mathfrak{a}$. The truncation operator with parameter $T\ge 1$, denoted by $\Lambda^T$, where $\Lambda^T:V\longrightarrow L^2(X_0(q),\chi)$, is defined by
\[
\forall f\in V:\quad (\Lambda^Tf)(g) = 
\begin{cases}
f(g)& h(g)\le T,\\
f(g) - a_{0,\mathfrak{b}}(y(\sigma_{\mathfrak{b}}^{-1}\gamma g),\theta(\sigma_{\mathfrak{b}}^{-1}\gamma g))& \gamma\in\Gamma_0(q): y(\sigma_{\mathfrak{b}}^{-1}\gamma g) = h(g) > T.
\end{cases}
\]
\end{definition}
The truncated Eisenstein series, $\Lambda^TE_{\mathfrak{a},n}(g,1/2 + it)$, is equal to the ordinary Eisenstein series, $E_{\mathfrak{a},n}(g,1/2 + it)$, up to height $T$, and to the Eisenstein series minus its constant term for height greater than $T$.

\label{remark:important}
\textbf{Important Remark}: from this point onward, we fix the value of the truncation parameter as follows.
\[
T = 
\begin{cases}
4e^{2\pi}& e^{2\pi}/2\le h(x) \le 2e^{2\pi}\\
e^{2\pi}& \text{otherwise}.
\end{cases}
\]

We are now ready to define the functional $I_x$.
\begin{definition}[The functional $I_x$]
Let $x\in X_0(q)$. We define the function $I_x$, $I_x:V\longrightarrow\mathbb{C}$ by
\[
I_x(v) = (\Lambda^Tv)(x).
\]
\end{definition}

As in the previous Chapter, the norm $P$ is induced from the definition of the functional $I_x$.
\begin{definition}[The norm $P$]
\label{definition:p_norm}
Given an Eisenstein series $f\in V$, we define its $P$ norm by
\[
||f||_P^2 = P(f) := \int_{X_0(q)}|I_x(f)|^2d\chi(x) = \int_{X_0(q)}|\Lambda^Tf(x)|^2d\chi(x).
\]
\end{definition}
We recall that $d\chi(x)$ is ``the standard volume measure" on $X_0(q)$, given by $\dfrac{dxdyd\theta}{y^2}$ in Iwasawa coordinates.

\textbf{Remark}: the norm $P$ is induced by the Hermitian inner product $<\cdot,\cdot>_P:V^2\longrightarrow \mathbb{C}$, defined for a pair of Eisenstein series $\phi,\psi\in V$ by
\[
<\phi,\psi>_P = \int_{X_0(q)}\Lambda^T\phi(x)\overline{\Lambda^T\psi(x)}d\chi(x).
\]

After defining the representation $V$, the functional $I_x$, and the Hermitian norm $P$, we would like to define the norm $Q$. Since our norm $Q$ will be interpolated from an auxiliary norm, $R$, we first define $R$.

\subsubsection{Fixing $R$}
\begin{definition}[The norm $R$]
\label{definition:r_norm}
Denote by $\mathcal{D}$ the differential operator $-2\Delta = \mathfrak{C} - \dfrac{d^2}{d\theta^2}$. Given an Eisenstein series $f\in V$, we define its $R$ norm by
\[
||f||_R^2 = R(f) := \int_{X_0(q)}
\Lambda^T\mathcal{D}f(x)
\overline{\Lambda^Tf(x)}d\chi(x).
\]
\end{definition}
It is easy to verify that $\mathcal{D}:V\longrightarrow V$ is a positive definite operator relative to the inner product $<\cdot,\cdot>_P$, and therefore the norm $R$ is Hermitian as well.

\textbf{Remark}: we denote the pure weight basis, normalized by the constant term, denoted by $(E_{\mathfrak{a},n}(*,1/2+it))_{n\in2\mathbb{Z}}$, by $(e_n)_{n\in 2\mathbb{Z}}$, or $(e_n)_n$ by short.

\textbf{Remark}: it is evident that the norm $R$ has the property that $R(e_n) = (2\lambda + n^2)P(e_n)$ for all even $n$, and that the basis $(e_n)_n$ is orthogonal relative to the norm $R$.

\subsubsection{Fixing $Q$}
Since $\mathcal{D} > 0$, we may define for each real number $s > 0$ the corresponding power of $\mathcal{D}$, i.e. $\mathcal{D}^s$. From this point onward, we fix $s_0 = 1/2 + \epsilon_0$ for some $0 < \epsilon_0 < 1/2$, and denote by $\mathfrak{d}$ the operator $\mathcal{D}^{s_0}$.

We are now ready to define the norm $Q$.
\begin{definition}[The norm $Q$]
\label{definition:q_norm}
Given an Eisenstein series $f\in V$, we define its $Q$ norm by
\[
||f||_Q^2 = Q(f) := \int_{X_0(q)}
\Lambda^T\mathfrak{d}f(x)
\overline{\Lambda^Tf(x)}d\chi(x).
\]
\end{definition}

\textbf{Remark}: the operator $\mathfrak{d}$ is positive by construction. 

\textbf{Remark}: the norm $Q$ preserves the orthogonality of the pure weight basis $(e_n)_n$, and for each $n\in 2\mathbb{Z}$, $Q(e_n) = (2\lambda + n^2)^{1/2 + \epsilon_0}P(e_n)$.

We apply the pre-Sobolev bound with the norms $P$ and $Q$. To simplify our pre-Sobolev bound, we bound $\text{tr}(P|Q)$.

\begin{claim}[Norm Trace bound]
One has
\[
\text{tr}(P|Q) \ll_{\epsilon_0} 1
\]
\end{claim}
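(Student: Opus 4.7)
The plan is to diagonalize both $P$ and $Q$ against the pure-weight basis $(e_n)_{n\in 2\mathbb Z}$ and reduce the Norm Trace to an explicit numerical series. The key inputs are the orthogonality of the basis with respect to $P$ (which I would verify first) and the eigenvalue relation $Q(e_n)=(2\lambda+n^2)^{1/2+\epsilon_0}P(e_n)$ already recorded just before the claim. Granted both, one obtains
\[
\mathrm{tr}(P|Q)=\sum_{n\in 2\mathbb Z}\frac{P(e_n)}{Q(e_n)}=\sum_{n\in 2\mathbb Z}(2\lambda+n^2)^{-(1/2+\epsilon_0)}.
\]

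The remaining work is then purely numerical. Since $\lambda=1/4+t^2\ge 1/4$, the $n=0$ term is bounded by $(1/2)^{-1/2-\epsilon_0}=O(1)$; for $n\ne 0$ in $2\mathbb Z$ I would drop $2\lambda$ and bound the tail by
\[
2\sum_{m\ge 1}(2m)^{-1-2\epsilon_0}=2^{-2\epsilon_0}\zeta(1+2\epsilon_0)\ll 1/\epsilon_0.
\]
Summing gives $\mathrm{tr}(P|Q)\ll_{\epsilon_0}1$, and this also makes transparent the role of the exponent $1/2+\epsilon_0$ fixed in the definition of $\mathfrak{d}$: it is exactly what is needed for the series to converge while keeping the $t$-dependence benign.

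The step I expect to require a bit of care is the $P$-orthogonality of $(e_n)_n$, since $P$ is defined through the truncation operator $\Lambda^T$, and in general truncation does not behave transparently under the representation theory. The argument I would give is that $e_n$ transforms on the right by $e^{in\theta}$, and $\Lambda^T$ only modifies the behaviour in the Iwasawa coordinates $(x,y)$ (subtracting the zero $x$-Fourier coefficient in cuspidal zones), leaving the $\theta$-dependence untouched; hence $\Lambda^T e_n$ still transforms by $e^{in\theta}$. The invariant measure $dx\,dy\,d\theta/y^2$ factors through $\theta$, so $\langle\Lambda^T e_n,\Lambda^T e_m\rangle_P$ contains an inner integral $\int_0^{2\pi}e^{i(n-m)\theta}d\theta$ which vanishes for $n\ne m$. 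If $V^t$ is interpreted as the span over all cusps simultaneously, the same argument runs cusp by cusp, and summing over the $\ll_\epsilon q^\epsilon$ cusps of $\Gamma_0(q)$ contributes only a harmless factor absorbed into the $q^\epsilon$ appearing in the main theorem.
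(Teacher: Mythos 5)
Your argument is correct and follows essentially the same route as the paper: diagonalize against the pure-weight basis $(e_n)_{n\in 2\mathbb Z}$, use the relation $Q(e_n)=(2\lambda+n^2)^{1/2+\epsilon_0}P(e_n)$, and bound $\sum_{n\in 2\mathbb Z}(2\lambda+n^2)^{-(1/2+\epsilon_0)}\ll_{\epsilon_0}1$. The extra details you supply (the explicit $\zeta(1+2\epsilon_0)$ tail bound and the $\theta$-orthogonality of $\Lambda^T e_n$, which justifies the orthogonality the paper records only as a remark) are sound and consistent with the paper's setup.
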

\begin{proof}
By definition,
\[
\text{tr}(P|Q) = \sum_{n\in 2\mathbb{Z}}\dfrac{P(e_n)}{Q(e_n)}.
\]
By the previous remark, $Q(e_n) = (2\lambda + n^2)^{1/2 + \epsilon_0}$, and thus
\[
\sum_{n\in 2\mathbb{Z}}\dfrac{P(e_n)}{Q(e_n)} = 
\sum_{n\in 2\mathbb{Z}}\dfrac{1}{(2\lambda + n^2)^{1/2 + \epsilon_0}} \ll_{\epsilon_0} 1.
\]
\end{proof}

Plugging into the pre-Sobolev bound (see Lemma~\ref{lemma:pre_sobolev}), we obtain
\begin{lemma}[Specialized pre-Sobolev]
\label{lemma:pre_sobolev_specialized}
Let $E_{\mathfrak{a},n}(*,1/2 + it)$ be a pure-weight $n$ Eisenstein series, and let $x\in X_0(q)$. Then, with all notations as above, one has
\[
|\Lambda^TE_{\mathfrak{a},n}(x,1/2 + it)|^2 \ll_{\epsilon_0} (2\lambda + n^2)^{1/2 + \epsilon_0}||E_{\mathfrak{a},n}(*,1/2 + it)||_{P}^2\inf_{d > 1}\dfrac{d}{\chi(S(x,d))}.
\]
\end{lemma}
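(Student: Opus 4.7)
The plan is essentially to package together the three results that immediately precede the statement: the general pre-Sobolev bound (Lemma~\ref{lemma:pre_sobolev}), the Norm Trace bound $\text{tr}(P|Q) \ll_{\epsilon_0} 1$, and the spectral identity $Q(e_n) = (2\lambda + n^2)^{1/2+\epsilon_0} P(e_n)$ noted in the remark following Definition~\ref{definition:q_norm}. Since the specialization is a direct combination of results already in hand, I expect no real obstacle here; the lemma is the ``assembly step'' that prepares for the genuine work that follows (bounding $\chi(S(x,d))$ and the $P$-norm of the Eisenstein series, which is where the arithmetic difficulty lives).

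First I would apply Lemma~\ref{lemma:pre_sobolev} directly to our chosen functional $I_x(v) = (\Lambda^T v)(x)$ and to our norms $P$ and $Q$, obtaining
\[
\|I_x\|_Q^2 \le \frac{d}{\chi(S(x,d))}\,\text{tr}(P|Q)
\]
for every $d > 1$. Combined with the Norm Trace bound, this gives $\|I_x\|_Q^2 \ll_{\epsilon_0} d/\chi(S(x,d))$, and taking the infimum over $d>1$ produces the factor $\inf_{d>1} d/\chi(S(x,d))$ in the target inequality.

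Next I would unfold the defining inequality of $\|I_x\|_Q$ at the specific vector $v = E_{\mathfrak{a},n}(*,1/2+it) = e_n$:
\[
|\Lambda^T E_{\mathfrak{a},n}(x,1/2+it)|^2 \;=\; |I_x(e_n)|^2 \;\le\; \|I_x\|_Q^2 \cdot Q(e_n).
\]
The spectral remark after Definition~\ref{definition:q_norm} identifies $Q(e_n) = (2\lambda + n^2)^{1/2+\epsilon_0} P(e_n) = (2\lambda+n^2)^{1/2+\epsilon_0}\,\|E_{\mathfrak{a},n}(*,1/2+it)\|_P^2$, which follows from $\mathfrak{d} = \mathcal{D}^{s_0}$ acting as the scalar $(2\lambda+n^2)^{s_0}$ on the weight-$n$ basis vector and from the orthogonality of $(e_n)_n$ under both $P$ and $Q$.

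Substituting this eigenvalue identity into the previous display and using the bound on $\|I_x\|_Q^2$ from the first step yields
\[
|\Lambda^T E_{\mathfrak{a},n}(x,1/2+it)|^2 \;\ll_{\epsilon_0}\; (2\lambda+n^2)^{1/2+\epsilon_0}\,\|E_{\mathfrak{a},n}(*,1/2+it)\|_P^2 \cdot \frac{d}{\chi(S(x,d))}
\]
for every $d > 1$; taking the infimum over such $d$ gives the claim. The only conceptual care needed is to verify that $\mathfrak{d}$ is well-defined (which uses the positive-definiteness of $\mathcal{D}$ already noted) and that $e_n$ is indeed a simultaneous $P$- and $Q$-eigenvector, both of which are immediate from the construction.
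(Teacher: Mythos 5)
Your proposal is correct and follows exactly the route the paper takes (the paper simply states "plugging into the pre-Sobolev bound" — your write-up spells out the same assembly: apply Lemma~\ref{lemma:pre_sobolev} with the truncated-evaluation functional, invoke the Norm Trace bound $\text{tr}(P|Q)\ll_{\epsilon_0}1$, use $Q(e_n)=(2\lambda+n^2)^{1/2+\epsilon_0}P(e_n)$, and take the infimum over $d>1$). No gaps.
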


In the next step we discuss structured subsets of $S(x,d)$.

\section{Trading the volume of $S(x,d)$ for structure}
\label{section:volume_for_structure}
We defined the set $S(x,d)$ for some $x\in X_0(q)$ and $d > 1$ to be the set of all $y\in X_0(q)$ satisfying
\[
||I_y||_{Q}^2 \ge \dfrac{||I_{x}||_{Q}^2}{d}.
\]
The set $S(x,d)$ contributes a $1/\chi(S(x,d))$-factor to our pre-Sobolev bound (see Lemma~\ref{lemma:pre_sobolev_specialized}), which means that in order to turn our bound effective, we will require a lower bound on $\chi(S(x,d))$ for some choice of $d$. To obtain such a lower bound, we sacrifice some of the volume of $\chi(S(x,d))$ for structure. This is done as follows.

\begin{definition}[points in the same zone]
\label{definition:same_zone}
Let $x,y\in X_0(q)$. We say that $x$ and $y$ are in the same zone if $h(x),h(y) \le T$, or if $h(x),h(y) > T$, and $x$ and $y$ are in the cuspidal zone of the same cusp.
\end{definition}

Assume $x = yg$ for some $g\in SL_2(\mathbb{R})$ and that $x$ and $y$ are in the same zone. In this case, we have
\[
I_x = I_y\circ \rho(g),
\]
and therefore, for all $v\in V$:
\[
\dfrac{|I_x(v)|^2}{||v||_{Q}^2} = \dfrac{|I_y(\rho(g)v)|^2}{||v||_{Q}^2} = \dfrac{|I_y(\rho(g)v)|^2}{||\rho(g)v||_{Q}^2}\cdot\dfrac{||\rho(g)v||_{Q}^2}{||v||_{Q}^2}\le ||I_y||_{Q}^2||\rho(g)||_{Q}^2.
\]
Taking the supremum on $v$, we obtain the inequality, valid for $x$ and $y$ in the same zone,
\[
||I_x||_Q^2 \le ||I_y||_{Q}^2||\rho(g)||_{Q}^2.
\]
We define the measure $||\rho(g)||_{Q}^2$ for $g\in SL_2(\mathbb{R})$.
\begin{definition}[The $d$-measure]
We define the ``$d$-measure", $d:SL_2(\mathbb{R})\longrightarrow\mathbb{R}_+$ by
\[
\forall g\in SL_2(\mathbb{R}):\quad d(g) = ||\rho(g)||_{Q}^2.
\]
\end{definition}
\textbf{Remark}: the quantity $||\rho(g)||_{Q}$ is defined by $\sup_{0\neq v\in V}\dfrac{||\rho(g)v||_Q}{||v||_Q}$.

We define the ball of radius $d$ centered at $x$ relative to the $d$-measure.
\begin{definition}[The $d$-measure ball]
We define the ball of radius $d$ centered at $x$ relative to the $d$-measure, denoted $B(x,d)$ by
\[
B(x,d) = \{y\in X_0(q)|\exists g\in SL_2(\mathbb{R}) \text{ s.t. } yg = x, d(g) \le d, x\text{ and }y\text{ are in the same zone}\}.
\]
\end{definition}

By definition, for all $y\in B(x,d)$,
\[
||I_y||_Q^2 \ge \dfrac{||I_x||_Q^2}{d},
\]
so that
\[
B(x,d)\subseteq S(x,d),
\]
and therefore
\[
\dfrac{d}{\chi(S(x,d))} \le \dfrac{d}{\chi(B(x,d))}.
\]
We call the ratio $\dfrac{d}{\chi(B(x,d))}$ the continuity ratio. In the next step, we construct an upper bound on $\inf_{d>1}\dfrac{d}{\chi(B(x,d))}$.
\subsection{An upper bound for the continuity ratio}
In this section we prove:
\begin{lemma}
\label{lemma:continuity_constant_full_modular_group}
Let $x\in X_0(q) = \Gamma_0(q)\char`\\ SL_2(\mathbb{R})$, then
\[
\inf_{d>1}\dfrac{d}{\chi(B(x,d))} \ll_{\epsilon} q^{\epsilon}(y(x) + y(x)^{-1}),\quad \forall \epsilon > 0,
\]
where $y(x)$ is the Iwasawa $y$-coordinate of a representative of $x$ taken from the standard fundamental domain for $X_0(q)$.
\end{lemma}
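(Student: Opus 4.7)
The plan is to lower-bound $\chi(B(x,d))$ by producing an explicit subset. Pick a representative $g_x\in SL_2(\mathbb{R})$ of $x$ with Iwasawa coordinates $(x_0,y,\theta_0)$ from the standard fundamental domain. I would construct an anisotropic neighborhood $U$ of the identity in $SL_2(\mathbb{R})$, calibrated to the height $y$, with three properties: (i) $d(g)\le D$ for every $g\in U$, where $D$ is an absolute constant times a $q^\epsilon$ factor; (ii) the map $g\mapsto g_x g^{-1}$ embeds $U$ injectively into a single zone of $X_0(q)$; (iii) the Haar volume of $U$ satisfies $\chi(U)\gg 1/(y+y^{-1})$. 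Combining, $g_x U^{-1}\subseteq B(x,D)$, so $\chi(B(x,D))\gg q^{-\epsilon}/(y+y^{-1})$, and the infimum over $d$ is bounded by evaluating at $d=D$, giving the claim.

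\textbf{Shape of $U$.} The shape of $U$ is dictated by the local injectivity of $SL_2(\mathbb{R})\to X_0(q)$ at $g_x$, which is governed by the closest nontrivial conjugated lattice element $g_x^{-1}\gamma g_x$ for $\gamma\in\Gamma_0(q)\setminus\{1\}$. A direct computation I would do: the $\infty$-cusp stabilizer $n_1$ conjugates to $n_{1/y}$, constraining the $N$-direction at scale $1/y$; the $0$-cusp stabilizer (the lower-triangular element of $\Gamma_0(q)$ with $(2,1)$-entry $-q$) conjugates to the one with $(2,1)$-entry $-qy$, constraining the opposite unipotent $\bar N$-direction at scale $qy$; the remaining $\tau(q)-2 \ll q^\epsilon$ cusps contribute analogously after rescaling by their scaling matrices. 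I would take $U$ to be a box saturating each of these scales up to an absolute multiplicative constant, with sides of order $1$ in the remaining $A$ and $K$ directions. The Haar volume $\int_U\frac{du\,dr\,d\phi}{r^2}$ then works out to $\gg 1/(y+y^{-1})$ after separately checking the $y\ge 1$ regime (binding constraint $|u|\lesssim 1/y$) and the $y<1$ regime (binding constraint of order $qy$ in the $\bar N$-direction), with the $q^\epsilon$ shortfall absorbed by the number of cusps.

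\textbf{Main obstacle.} The delicate step is proving $d(g)\le D$ uniformly on $U$. Unlike the Bernstein--Reznikov setting, where $P$ is right-translation invariant, here $P$ and $Q$ are defined via the truncation $\Lambda^T$, which fails to commute with right translation, so $d(g)=\|\rho(g)\|_Q^2$ is not automatically $O(1)$. The required input is the ``truncation $T\to 2T$'' principle $\|\rho(g)v\|_T\ll\|v\|_{2T}$ advertised in the introduction (to be addressed in a later chapter), together with the analogous $Q$-norm version, which is accessible because $\mathcal{D}$ commutes with right translation and so its positive fractional powers do as well. This is precisely the stage at which arithmetic information on the Eisenstein series' constant term — and the $q^\epsilon$ factor from divisor-type sums — enters the analysis. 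Once the uniform bound on $d(g)$ is granted, the geometric and volume estimates outlined above are essentially mechanical and yield the stated inequality.
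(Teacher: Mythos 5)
Your overall strategy (exhibit an explicit subset of $B(x,d)$ with $d\ll_\epsilon q^\epsilon$ and Haar volume $\gg 1/(y+y^{-1})$) is the same as the paper's, but the set you propose creates a genuine gap at exactly the point you label the ``main obstacle''. Your box $U$ is calibrated to the injectivity radius: sides of order $1$ in the $A$ and $K$ directions, and up to order $\min(1,qy)$ in the $\bar N$ direction. For such $g$ the required bound $d(g)=\|\rho(g)\|_Q^2\ll_\epsilon q^\epsilon$ is not available from the machinery you appeal to: the advertised principle $\|\rho(g)v\|_T\ll\|v\|_{2T}$, and all of the paper's right-shift estimates (the $P$-norm bound of Claim~\ref{claim:right_shift_bound}, the interpolation step of Claim~\ref{claim:real_q_norm_right_shift_bound}, and the $R$-norm bound of Corollary~\ref{corollary:right_shift_r_norm} with its $\|Ad(g)\|$ factor), are proved only for \emph{small translates} $g\in R(\delta)$ with $\delta$ a fixed small constant, precisely because a small translate moves heights by at most a factor of $2$ and preserves cuspidal zones. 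A unit-scale move in $A$ or $\bar N$ need not satisfy either property: $h(xg^{-1})$ can cross the truncation height $T$, in which case $x$ and $xg^{-1}$ are not in the same zone and the point $xg^{-1}$ fails the membership condition in the definition of $B(x,d)$ outright, independently of any bound on $d(g)$. So property (i) of your plan is not merely deferred to a later chapter; as stated for your $U$ it is unproven and, for part of $U$, the conclusion $g_xU^{-1}\subseteq B(x,D)$ is simply false. You also never indicate where the $q^\epsilon$ in $D$ would come from, whereas in the paper it is exactly the content of Corollary~\ref{corollary:right_shift_q_norm}.

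The second point is that the injectivity-radius construction is unnecessary, and this is what makes the paper's proof go through where yours stalls. The paper takes the \emph{fixed} rectangle $R(\delta)$ (the one for which the $d$-measure bound is actually proved), and lower-bounds $\chi(xR(\delta))$ directly: writing $g_x=n_xa_xk_x$ and using $KR(\delta')\subseteq R(\delta)$ and $a_xN_{\delta'}a_x^{-1}=N_{y\delta'}$, the translated box contains $n_xN_{y\delta'}a_xA_{\delta'}K_{\delta'}$, whose volume in the quotient is computed in Iwasawa coordinates to be $\gg y(x)^{-1}$ (the $N$-extent is capped at the cusp width, which is the same phenomenon your $1/y$ calibration encodes, but no injectivity claim about $\bar N$-directions, shortest geodesics, or a cusp count $\tau(q)\ll q^\epsilon$ is ever needed; the $q^\epsilon$ comes solely from the $d$-measure bound). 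The general-level case is then reduced to this computation by moving the representative into the level-one fundamental domain by some $\tau\in SL_2(\mathbb{Z})$ and using left-invariance of Haar measure, which is where $y(x)+y(x)^{-1}$ appears. If you shrink your box so that all sides are $O(\delta)$ (keeping the $\min(\delta,1/y)$ cap in the $N$-direction), your argument collapses into the paper's; as written, with unit- and $qy$-scale sides, it does not close.
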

\begin{proof}
Recall that
\[
B(x,d) = \{y\in X_0(q)|\exists g\in SL_2(\mathbb{R}) \text{ s.t. } yg = x, d(g) \le d, x\text{ and }y\text{ are in the same zone}\}.
\]
Our idea is to consider a certain compact set of translates, $\Omega \subseteq SL_2(\mathbb{R})$, for which the right shift norm, i.e. $||\rho(g)||$, will be uniformly bounded for all $g\in \Omega$.

We require a certain definition.
\begin{definition}[The $\delta$-rectangle]
\label{definition:delta_rectangle}
Let $\delta > 0$, we define the $\delta$-rectangle around the identity, denoted $R(\delta)$ to be the set of all elements $g\in SL_2(\mathbb{R})$, such that if the Iwasawa coordinates of $g$ are $x,y,\theta$, (where we choose a representative for $\theta$ in the interval $[0,2\pi]$), then
\[
\max\{|x|,|y-1|,|\theta|\} \le \delta.
\]
\end{definition}

By Claim~\ref{claim:real_q_norm_right_shift_bound}, there exists a certain $\delta > 0$, such that for all $g\in R(\delta)$, one has
\[
||\rho(g)||_Q^2 \ll ||\rho(g)||_P^{1-2\epsilon_0}||\rho(g)||_R^{1+2\epsilon_0},
\]
where $\epsilon_0 > 0$ is as in Definition~\ref{definition:q_norm}. 
By Corollary~\ref{corollary:right_shift_r_norm}, for all $g\in SL_2(\mathbb{R})$ and $\epsilon > 0$, one has
\[
||\rho(g)||_R^2 \ll_{\epsilon} q^{\epsilon}(||Ad(g)||^2 + 1)||\rho(g)||_P^2.
\]
Plugging this back, we find that for all $g\in R(\delta)$, one has
\[
||\rho(g)||_Q^2 \ll_{\epsilon} q^{\epsilon}(||Ad(g)||^{1+2\epsilon_0} + 1)||\rho(g)||_P^2.
\]
Since $||Ad(g)||$ is a continuous function and $\mathfrak{sl}_2(\mathbb{R})$ is finite dimensional, one has for any compact set and, in particular, for all $g\in R(\delta)$: 
\[
||Ad(g)||^2 \ll 1.
\]
Where the implied constant depends on $\delta$, however, since $\delta > 0$ is a fixed constant, the implied constant is absolute.

Plugging this back, we have
\[
||\rho(g)||_Q^2 \ll_{\epsilon} q^{\epsilon}||\rho(g)||_P^2.
\]
Bounding $||\rho(g)||_P^2$ on $R(\delta)$ is harder, and doesn't follow from compactness since $V$ is infinite dimensional as opposed to $\mathfrak{sl}_2(\mathbb{R})$.

Since, in the case of cuspidal representations, $I_x$ is the pointwise evaluation functional, and therefore the norm $P$ is the standard norm, the identity $||\rho(g)||_P^2 = 1$ follows from the basic fact that the standard norm is $SL_2(\mathbb{R})$-right shift invariant, i.e.
\[
\forall g\in SL_2(\mathbb{R}), v\in L^2(X_0(q),\chi):\quad ||\rho(g)v||_P^2 = ||\rho(g)v||^2 = ||v||^2 = ||v||_P^2.
\]
Since the Eisenstein series is not square integrable, we altered the definition of $I_x$, and thereby of the norm $P$. The functional $I_x$ is defined by first truncating and then evaluating at $x$. Therefore, for all $g\in SL_2(\mathbb{R})$ and $v\in V$, by definition
\[
||\rho(g)v||_P^2 = ||\Lambda^T\rho(g)v||^2,\quad ||v||_P^2 = ||\Lambda^Tv||^2.
\]
However, since right translations and truncations don't commute, i.e. $\Lambda^T\rho(g) \neq \rho(g)\Lambda^T$, the same argument as in the square integrable case, using the invariancy of the standard norm relative to right translations, no longer holds. Not all is lost however, as we manage to bound $||\rho(g)||_P^2 \ll 1$ over our $\delta$-rectangles.

Claim~\ref{claim:right_shift_bound} states that for the same $\delta > 0$, we also have
\[
\forall g\in R(\delta):\quad ||\rho(g)||_P^2 \ll 1.
\]
Plugging back, we obtain the following corollary.
\begin{corollary}
\label{corollary:right_shift_q_norm}
With $R(\delta)$ as in Claim~\ref{claim:right_shift_bound}, one has
\[
\forall g\in R(\delta):\quad ||\rho(g)||_{Q}^2 \ll_{\epsilon} q^{\epsilon},\quad \forall \epsilon > 0.
\]
\end{corollary}

\begin{claim}
With all notations as above, there exists a constant $d \ll_{\epsilon} q^{\epsilon}$, and $0 < \delta' \le \delta$, such that 
\[
xR(\delta')\subseteq B(x,d).
\]
\end{claim}
\begin{proof}
The choice of $\delta > 0$ as in Claim~\ref{claim:right_shift_bound} satisfies the additional, following, property. For all $g\in R(\delta)$, if $h(x) > 2T$ then $h(xg) > T \ge 1$ and $x,xg$ are in the same cuspidal zone, and therefore in the same zone (see Definition~\ref{definition:same_zone}). Otherwise if $h(x) \le T/2$, then $h(xg) \le T$ and again $x$ and $xg$ are in the same zone. See Claim~\ref{claim:delta_existence} for reference.

Since, by Remark~\ref{remark:important}, we fix $T$ to be equal to $e^{2\pi}$ by default, or to $4e^{2\pi}$ if $e^{2\pi}/2 < h(x) \le 2e^{2\pi}$, it follows that $x$ and $xg$ are in the same zone for all $g\in R(\delta)$. 

Since the function $g\rightarrow g^{-1}$ is a continuous automorphism of $SL_2(\mathbb{R})$ that fixes the identity, and since $R(\delta)$ contains an open neighborhood of the identity, so does $R(\delta)^{-1}$. Since the set of $\eta$-rectangles $R(\eta)$ with $\eta > 0$ form a filtration of the open neighborhoods of the identity, i.e. for each open neighborhood $U$ of the identity there exists some $\eta_U > 0$ such that $R(\eta_U)\subseteq U$. Fixing $0 < \delta' = \min\{\delta,\eta_{R(\delta)^{-1}}\}$, we find that $R(\delta') \subseteq R(\delta)\cap R(\delta)^{-1}$. Therefore, we also have $R(\delta')^{-1}\subseteq R(\delta)\cap R(\delta)^{-1}$.

Next, we fix $d \ll_{\epsilon} q^{\epsilon}$ to be a number satisfying $||\rho(g)||_Q^2 \le d$ for all $g\in R(\delta)$, which is guaranteed to exist by Corollary~\ref{corollary:right_shift_q_norm}. It follows that for all $g\in R(\delta')\subseteq R(\delta)^{-1}$, one has
\[
||\rho(g^{-1})||_Q^2\le d,
\]
while by the above argument, since $g\in R(\delta')\subseteq R(\delta)$, $x$ and $xg$ are in the same zone, implying that $xg\in B(x,d)$. Since this is true for all $g\in R(\delta')$, we conclude that $xR(\delta')\subseteq B(x,d)$.
\end{proof}

\textbf{Remark}: from this point onward we fix $d \ll_{\epsilon} q^{\epsilon}$ as in the above claim.

\textbf{Remark}: possibly replacing $\delta > 0$ from Claim~\ref{claim:delta_existence} by, the smaller, $\delta' > 0$ from the above claim, we may assume $xR(\delta)\subseteq B(x,d)$.

We are ready to bound the continuity constant and prove Lemma~\ref{lemma:continuity_constant_full_modular_group}. It is instructive to consider first the case of the full modular group, i.e., when $q = 1$.

\begin{claim}
\label{claim:continuity_ratio_full_modular_group}
With all notations as above, for all $x\in X_0(1) = \Gamma_0(1)\char`\\SL_2(\mathbb{R})$, one has
\[
\inf_{d\in\mathbb{R}_+}\dfrac{d}{\chi(B(x,d))} \ll_{\epsilon} q^{\epsilon}y(x),\quad \forall \epsilon > 0.
\]
\end{claim}
\begin{proof}
Fix $\epsilon > 0$, and let $x\in X_0(1)$ be an arbitrary point. Either $y(x) > 2$ or $y(x) \le 2$. Assume first that $y(x) \le 2$. We define the function $z\rightarrow\chi(zR(\delta))$ sending a point $z$ to the volume of the set $zR(\delta)$ in $X_0(1)$. Since the set of points $C:=\{z\in X_0(1)|h(z) \le 2\}$ is compact, and since the function $z\rightarrow\chi(zR(\delta))$ is continuous and positive for every $z$, it follows that it attains a positive minimum on $C$. Since by our assumption on $d$, it is enough to fix $d\ll_{\epsilon} q^{\epsilon}$ so that $xR(\delta) \subseteq B(x,d)$. Therefore, for all such $x$ one has
\[
\inf_{d > 1}\dfrac{d}{\chi(B(x,d))} \ll_{\epsilon} q^{\epsilon}.
\]
Next, we consider the case where $y(x) > 2$. The proof of claim~\ref{claim:delta_existence} shows that for all $\nu > 0$ there exists $\delta(\nu) > 0$ such that $KR(\delta(\nu))\subseteq R(\nu)$. Applying this statement for $\nu = \delta > 0$, we find that there exists $\delta(\delta) = \delta' > 0$ such that $KR(\delta')\subseteq R(\delta)$.

Keeping the same assumptions on $d$ as before, we know that the set $B(x,d)$ contains the rectangle $xR(\delta)$. Let $g_x \in SL_2(\mathbb{R})$ be a representative of $x$ taken from the standard fundamental domain for $X_0(1)$. 
\begin{definition}[The Standard Fundamental Domain for $X_0(1)$]
\label{definition:standard_fundamental_domain}
We define the standard fundamental domain for $X_0(1)$, denoted $\mathcal{F}(\Gamma_0(1)\char`\\ SL_2(\mathbb{R}))\subseteq SL_2(\mathbb{R})$, as follows
\[
\mathcal{F}(\Gamma_0(1)\char`\\ SL_2(\mathbb{R})) = \{g\in SL_2(\mathbb{R}): -1/2\le x(g)\le 1/2, x(g)^2 + y(g)^2 \ge 1\},
\]
where $x(g), y(g)$ denote the Iwasawa $x$ and $y$ coordinates of $g$, respectively.
\end{definition}

If $g_x = n_xa_xk_x$ is the Iwasawa decomposition of $g_x$, we consider the set $k_xR(\delta)$. The above argument shows that $R(\delta') \subseteq k_xR(\delta)$, so that
\[
xR(\delta) \supseteq n_xa_xR(\delta').
\]
Denote by $N_{\delta'},A_{\delta'}$ and $K_{\delta'}$ the sets
\[
N_{\delta'} = 
\left\{
\begin{pmatrix}
1& u\\
0& 1
\end{pmatrix} \in N : |u| \le \delta'
\right\},\quad
A_{\delta'} = 
\left\{
\begin{pmatrix}
y^{1/2}& 0\\
0& y^{-1/2}
\end{pmatrix} \in A : 1 - \delta' \le y \le 1 + \delta'
\right\},
\]
\[
K_{\delta'} = 
\left\{
\begin{pmatrix}
\cos\theta& -\sin\theta\\
\sin\theta& \cos\theta
\end{pmatrix} \in K : |\theta| \le \delta'
\right\}.
\]
Since $N$ is normal in $NA$, we have
\[
ana^{-1} = 
\begin{pmatrix}
y^{1/2}& 0\\
0& y^{-1/2}
\end{pmatrix}
\begin{pmatrix}
1& u\\
0& 1
\end{pmatrix}
\begin{pmatrix}
y^{-1/2}& 0\\
0& y^{1/2}
\end{pmatrix}
=
\begin{pmatrix}
1& yu\\
0& 1
\end{pmatrix},
\]
implying that
\[
a_xN_{\delta'} = N_{y(x)\delta'}a_x.
\]
By definition, $R(\delta') = N_{\delta'}A_{\delta'}K_{\delta'}$, so that
\[
xR(\delta)\supseteq n_xa_xN_{\delta'}A_{\delta'}K_{\delta'} = n_xN_{y(x)\delta'}a_xA_{\delta'}K_{\delta'}.
\]
Therefore, if by abuse of notation we denote by $n_x$ the Iwasawa $x$-coordinate of the point $x$ (as well as the element of $N$), we have
\[
\chi(B(x,d)) \ge \chi(n_xN_{y(x)\delta'}a_xA_{\delta'}K_{\delta'}) = \int_{n_x}^{n_x + \min\{1,y(x)\delta'\}}\int_{y(x)(1-\delta')}^{y(x)(1+\delta')}\int_{-\delta'}^{\delta'}\dfrac{dxdyd\theta}{y^2}
\]
\[
= \min\{1,y(x)\delta'\}y(x)^{-1}\left(\dfrac{1}{1-\delta'} - \dfrac{1}{1 + \delta'}\right)2\delta' \gg y(x)^{-1}.
\]
So that
\[
y(x) > 2\implies \inf_{d>1}\dfrac{d}{\chi(B(x,d))}\ll_{\epsilon} q^{\epsilon}y(x).
\]
Since for all $x\in X_0(1)$, $y(x) \gg 1$, one has
\[
\forall x\in X_0(1):\quad \inf_{d>1}\dfrac{d}{\chi(B(x,d))}\ll_{\epsilon} q^{\epsilon}y(x).
\]
Thus completing the proof of Lemma~\ref{lemma:continuity_constant_full_modular_group} in the case of the full modular group.
\end{proof}

We are now ready to complete the proof of Lemma~\ref{lemma:continuity_constant_full_modular_group}. 

Fix $\epsilon > 0$ and let $x\in\Gamma_0(q)\char`\\ SL_2(\mathbb{R})$. As in the case of the full modular group, we choose a fundamental domain $\mathcal{F}(\Gamma_0(q)\char`\\ SL_2(\mathbb{R}))$, such that~\label{fundamental_domain}
\[
\mathcal{F}(\Gamma_0(q)\char`\\ SL_2(\mathbb{R})) \supseteq 
\mathcal{F},
\]
where $\mathcal{F} = \mathcal{F}(\Gamma_0(1)\char`\\ SL_2(\mathbb{R}))$ is the standard fundamental domain for $X_0(1)$, see Definition~\ref{definition:standard_fundamental_domain}. We also choose $g_x\in \mathcal{F}(\Gamma_0(q)\char`\\ SL_2(\mathbb{R}))$, a representative of $x$. There exists some $\tau \in \Gamma_0(q)\char`\\SL_2(\mathbb{Z})$ with the property that $\tau g_x\in \mathcal{F}$. Claim~\ref{claim:continuity_ratio_full_modular_group} implies that there exists some choice $d = d(\tau g_x) \ll_{\epsilon} q^{\epsilon}$, such that
\[
\dfrac{d}{\text{vol}_{SL_2(\mathbb{R})}(\tau g_xR(\delta)\cap \mathcal{F})} \ll_{\epsilon}  q^{\epsilon}y(\tau g_x) \ll_{\epsilon} q^{\epsilon}(y(x) + y(x)^{-1}),
\]
where $y(x)$ is the Iwasawa $y$-coordinate of $g_x$, the representative of $x$ that is taken from our chosen fundamental domain.

Since the Haar measure on $SL_2(\mathbb{R})$ is left shift invariant, $\text{vol}_{SL_2(\mathbb{R})}(\tau g_xR(\delta)\cap \mathcal{F}) = \text{vol}_{SL_2(\mathbb{R})}(g_xR(\delta)\cap \tau^{-1}\mathcal{F})$. Since these sets project one to one onto $X_0(q)$, we also have
\[
\chi(xR(\delta)\cap \tau^{-1}\mathcal{F}) = \text{vol}_{SL_2(\mathbb{R})}(g_xR(\delta)\cap \tau^{-1}\mathcal{F}) = \text{vol}_{SL_2(\mathbb{R})}(\tau g_xR(\delta)\cap \mathcal{F}).
\]
Therefore,
\[
\dfrac{d}{\chi(xR(\delta)\cap \tau^{-1}\mathcal{F})}\ll_{\epsilon} q^{\epsilon}(y(x) + y(x)^{-1}).
\]
Since $B(x,d)\supseteq xR(\delta)\supseteq xR(\delta)\cap \tau^{-1}\mathcal{F}$, we trivially have
\[
\chi(B(x,d)) \ge \chi(xR(\delta)\cap \tau^{-1}\mathcal{F}).
\]
Therefore,
\[
\dfrac{d}{\chi(B(x,d))}\ll_{\epsilon} q^{\epsilon}(y(x) + y(x)^{-1}),
\]
which completes the proof.
\end{proof}

\section{The Sobolev bound}
The following is a corollary of Lemma~\ref{lemma:pre_sobolev_specialized} and
Lemma~\ref{lemma:continuity_constant_full_modular_group}.
\begin{corollary}
\label{corollary:sobolev_unplugged}
For all cusps $\mathfrak{a}\in\mathfrak{A}$, $n\in 2\mathbb{Z}$, $t\in\mathbb{R}$, $x\in X_0(q)$, $1/2 > \epsilon_0 > 0$ and $\epsilon > 0$, one has:
\[
|\Lambda^TE_{\mathfrak{a},n}(x,1/2 + it)|^2 \ll_{\epsilon} 
q^{\epsilon}(\lambda + n^2)^{1/2 + \epsilon_0}||E_{\mathfrak{a},n}(*,1/2 + it)||_{P}^2\left(y(x) + y(x)^{-1}\right).
\]
\end{corollary}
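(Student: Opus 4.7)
The plan is to deduce Corollary~\ref{corollary:sobolev_unplugged} directly by combining Lemma~\ref{lemma:pre_sobolev_specialized} with Lemma~\ref{lemma:continuity_constant_full_modular_group}, linked by the containment $B(x,d)\subseteq S(x,d)$ established in Section~\ref{section:volume_for_structure}. First, I would invoke Lemma~\ref{lemma:pre_sobolev_specialized} to write
\[
|\Lambda^T E_{\mathfrak{a},n}(x,1/2+it)|^2 \ll_{\epsilon_0} (2\lambda + n^2)^{1/2+\epsilon_0}\,\|E_{\mathfrak{a},n}(*,1/2+it)\|_P^2 \cdot \inf_{d>1}\dfrac{d}{\chi(S(x,d))}.
\]
Because $B(x,d)\subseteq S(x,d)$, one has $\chi(B(x,d))\le\chi(S(x,d))$ for every $d>1$, hence $\dfrac{d}{\chi(S(x,d))}\le\dfrac{d}{\chi(B(x,d))}$ pointwise in $d$. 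Taking infima on both sides and feeding in Lemma~\ref{lemma:continuity_constant_full_modular_group} yields
\[
\inf_{d>1}\dfrac{d}{\chi(S(x,d))} \;\le\; \inf_{d>1}\dfrac{d}{\chi(B(x,d))} \;\ll_{\epsilon}\; q^{\epsilon}\left(y(x)+y(x)^{-1}\right).
\]

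Substituting this back into the previous display produces the claimed inequality, up to one cosmetic simplification: the factor $(2\lambda+n^2)^{1/2+\epsilon_0}$ is replaced by $(\lambda+n^2)^{1/2+\epsilon_0}$, which is legitimate because $2\lambda+n^2\le 2(\lambda+n^2)$ and the resulting constant $2^{1/2+\epsilon_0}$ can be absorbed into the implicit $\ll_\epsilon$ (the parameter $\epsilon_0\in(0,1/2)$ having been fixed once and for all in Definition~\ref{definition:q_norm}). I do not anticipate any real obstacle here: all of the substantive work sits inside the pre-Sobolev bound of Section~\ref{section:specialized} and the continuity-ratio estimate of Section~\ref{section:volume_for_structure}, and the corollary is a mechanical concatenation of the two.
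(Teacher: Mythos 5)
Your proposal is correct and follows exactly the route the paper intends: the corollary is stated as an immediate consequence of Lemma~\ref{lemma:pre_sobolev_specialized} and Lemma~\ref{lemma:continuity_constant_full_modular_group}, linked through the containment $B(x,d)\subseteq S(x,d)$ from Section~\ref{section:volume_for_structure}, with the harmless replacement of $(2\lambda+n^2)^{1/2+\epsilon_0}$ by $(\lambda+n^2)^{1/2+\epsilon_0}$ absorbed into the implied constant. No gaps.
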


By Claim~\ref{claim:truncated_eisenstein_norm_bound}, one has
\[
||E_{\mathfrak{a},n}(*,1/2 + it)||_P^2 \ll_{\epsilon} q^{\epsilon}\left(1 + \log(1 + |n/2|) + \log(1 + |t|)\right).
\]
Plugging into the above Corollary, we obtain:
\begin{corollary}
With the same notations as in Corollary~\ref{corollary:sobolev_unplugged}, one has
\[
|\Lambda^TE_{\mathfrak{a},n}(x,1/2 + it)|^2 \ll_{\epsilon,\epsilon_0} 
q^{\epsilon}\left(\lambda^{1/2 + \epsilon_0} + |n|^{1 + \epsilon_0}\right)\left(y(x) + y(x)^{-1}\right).
\]
\end{corollary}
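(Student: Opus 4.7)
The statement is an immediate consequence of substituting Claim~\ref{claim:truncated_eisenstein_norm_bound} into Corollary~\ref{corollary:sobolev_unplugged}; the only real content is elementary algebraic bookkeeping to repackage the resulting expression in the stated form. Starting from Corollary~\ref{corollary:sobolev_unplugged} applied with an auxiliary parameter $\epsilon_0' \in (0,\epsilon_0)$ to be chosen below, and inserting the bound $\|E_{\mathfrak{a},n}(*,1/2+it)\|_P^2 \ll_{\epsilon} q^{\epsilon}(1 + \log(1 + |n|) + \log(1 + |t|))$, one obtains (after renaming $2\epsilon$ as $\epsilon$)
\[
|\Lambda^T E_{\mathfrak{a},n}(x,1/2+it)|^2 \ll_{\epsilon,\epsilon_0'} q^{\epsilon}(\lambda + n^2)^{1/2 + \epsilon_0'}\bigl(1 + \log(1+|n|) + \log(1+|t|)\bigr)\bigl(y(x) + y(x)^{-1}\bigr).
\]
It remains to absorb the first two factors into $\lambda^{1/2 + \epsilon_0} + |n|^{1+\epsilon_0}$.

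Since $s = 1/2 + \epsilon_0' < 1$, the subadditivity inequality $(a+b)^s \le a^s + b^s$ for $a,b \ge 0$ gives
\[
(\lambda + n^2)^{1/2 + \epsilon_0'} \le \lambda^{1/2 + \epsilon_0'} + |n|^{1 + 2\epsilon_0'}.
\]
Because $\lambda = 1/4 + t^2 \asymp 1 + t^2$, one has $\log(1+|t|) \ll_{\delta} \lambda^{\delta/2}$ and $\log(1+|n|) \ll_{\delta} (1 + |n|)^{\delta}$ for every $\delta > 0$. Choosing $\epsilon_0'$ and $\delta$ so that $\epsilon_0' + \delta/2 \le \epsilon_0$ and $2\epsilon_0' + \delta \le \epsilon_0$ (for instance $\epsilon_0' = \epsilon_0/4$, $\delta = \epsilon_0/2$), multiplying through and bounding crudely yields
\[
(\lambda + n^2)^{1/2 + \epsilon_0'}\bigl(1 + \log(1+|n|) + \log(1+|t|)\bigr) \ll_{\epsilon_0} \lambda^{1/2 + \epsilon_0} + |n|^{1 + \epsilon_0},
\]
which plugged back gives the claimed corollary.

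No conceptual obstacle arises; the only care needed is to reserve enough slack between $\epsilon_0'$ and $\epsilon_0$ so that both the power-expansion loss (producing $|n|^{1+2\epsilon_0'}$ rather than $|n|^{1+\epsilon_0'}$) and the logarithmic loss can be traded simultaneously against the polynomial slack in the exponents. All implied constants depend only on $\epsilon$ and $\epsilon_0$, as stated.
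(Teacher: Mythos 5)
Your proposal is correct and follows essentially the same route as the paper: the paper simply substitutes the bound of Claim~\ref{claim:truncated_eisenstein_norm_bound} into Corollary~\ref{corollary:sobolev_unplugged} and absorbs the logarithmic factors and the expansion of $(\lambda + n^2)^{1/2+\epsilon_0}$ into the exponents, which is exactly what you do, only with the $\epsilon$-bookkeeping (auxiliary $\epsilon_0'$, $\delta$) spelled out explicitly rather than left implicit.
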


\begin{theorem}[Sobolev bound for pure-weight $n$ Eisenstein series]
\label{theorem:sobolev_bound}
Let $E_{\mathfrak{a},n}(*,1/2 + it)$ be a pure-even-weight $n$ Eisenstein series of the cusp $\mathfrak{a}\in\mathfrak{A}$. Let $x\in X_0(q)$, and let $1/2 > \epsilon > 0$. Then
\[
|E_{\mathfrak{a},n}(x,1/2 + it)|^2 \ll_{\epsilon} q^{\epsilon}\left(\lambda^{1/2 + \epsilon} + |n|^{1 + \epsilon}\right)\left(y(x) + y(x)^{-1}\right).
\]
\end{theorem}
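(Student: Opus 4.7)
The plan is to deduce Theorem~\ref{theorem:sobolev_bound} from the preceding corollary by adding back the constant term contribution that was removed by $\Lambda^T$. Setting $\epsilon_0 = \epsilon$ in that corollary gives
\[
|\Lambda^T E_{\mathfrak{a},n}(x,1/2+it)|^2 \ll_{\epsilon} q^{\epsilon}\bigl(\lambda^{1/2+\epsilon} + |n|^{1+\epsilon}\bigr)\bigl(y(x) + y(x)^{-1}\bigr),
\]
so via $(a+b)^2 \le 2a^2+2b^2$ it suffices to bound $|E_{\mathfrak{a},n}(x,1/2+it) - \Lambda^T E_{\mathfrak{a},n}(x,1/2+it)|^2$ by a quantity of the same order (or better).

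The choice of $T$ in Remark~\ref{remark:important} is tailored precisely so that the only case where the truncation acts nontrivially at $x$ is $h(x) > 2e^{2\pi}$, in which $T = e^{2\pi}$ and $x$ lies in the cuspidal zone of a unique cusp $\mathfrak{b}$. When instead $h(x)\le T$, one has $\Lambda^T E(x) = E(x)$ and the theorem is immediate. In the remaining case, by definition of $\Lambda^T$,
\[
E_{\mathfrak{a},n}(x,1/2+it)-\Lambda^T E_{\mathfrak{a},n}(x,1/2+it) = a_{0,\mathfrak{b}}\bigl(y(\sigma_{\mathfrak{b}}^{-1}\gamma x),\theta(\sigma_{\mathfrak{b}}^{-1}\gamma x)\bigr)
\]
for some $\gamma\in\Gamma_0(q)$ with $y(\sigma_{\mathfrak{b}}^{-1}\gamma x) = h(x)$.

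Now, the constant term of the normalized pure-weight-$n$ Eisenstein series at $\mathfrak{b}$ has the standard form
\[
a_{0,\mathfrak{b}}(y,\theta) = \bigl(\delta_{\mathfrak{a}=\mathfrak{b}}\,y^{1/2+it} + \phi_{\mathfrak{a},\mathfrak{b},n}(1/2+it)\,y^{1/2-it}\bigr)e^{in\theta},
\]
and unitarity of the weight-$n$ scattering matrix on the critical line yields $|\phi_{\mathfrak{a},\mathfrak{b},n}(1/2+it)|\le 1$. Consequently $|a_{0,\mathfrak{b}}|^2 \ll h(x) \ll y(x)+y(x)^{-1}$, where the last comparison between the height function and the Iwasawa $y$-coordinate of a representative in the chosen fundamental domain is the same one exploited in the proof of Lemma~\ref{lemma:continuity_constant_full_modular_group}. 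Since this extra term carries no $\lambda$ or $|n|$ factor, it is dominated by the bound on $|\Lambda^T E|^2$ above, and combining the two estimates yields the theorem.

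The only genuine thing to verify is therefore the explicit form of the constant term and the unitarity of the weight-$n$ scattering matrix at $s=1/2+it$; both are standard and the estimate on $|a_{0,\mathfrak{b}}|^2$ is trivial once they are in hand, so the step from the previous corollary to the final statement is essentially bookkeeping and should not be the main difficulty of the paper.
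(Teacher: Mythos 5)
Your proposal is correct and follows essentially the same route as the paper: bound $|\Lambda^T E_{\mathfrak{a},n}(x,1/2+it)|^2$ by the specialized Sobolev corollary, note that truncation only acts nontrivially when $h(x)$ exceeds the chosen $T$, identify the discrepancy with the constant term at the relevant cusp, bound it by $h(x)^{1/2}$ via $|\phi_{\mathfrak{a},\mathfrak{b}}(1/2+it)|\le 1$ and $|\alpha(n,1/2+it)|=1$, and finish with $h(x)\ll y(x)+y(x)^{-1}$ from the scaling-matrix/fundamental-domain comparison. The only cosmetic difference is that the paper spells out the last comparison (widths $w_{\mathfrak{b}}\ge 1$ and $y(\tau x)\ll y(x)+y(x)^{-1}$ for $\tau\in SL_2(\mathbb{Z})$) inside the theorem's proof rather than citing it, so your argument matches the paper's in substance.
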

\begin{proof}
By the previous corollaries, assume $E_{\mathfrak{a},n}(*,1/2 + it)$ is a pure-even-weight $n$ Eisenstein series of the cusp $\mathfrak{a}\in\mathfrak{A}$. Let $x\in X_0(q)$, and let $1/2 > \epsilon > 0$. Then
\[
|\Lambda^TE_{\mathfrak{a},n}(x,1/2 + it)|^2 \ll_{\epsilon} q^{\epsilon}\left(\lambda^{1/2 + \epsilon} + |n|^{1 + \epsilon}\right)\left(y(x) + y(x)^{-1}\right).
\]
By the definition of the truncation operator, if $x\in X_0(q)$ satisfies $h(x) \le T$, then $\Lambda^TE_{\mathfrak{a},n}(x,1/2 + it) = E_{\mathfrak{a},n}(x,1/2 + it)$, and the theorem follows.

Otherwise, assume $h(x) > T$. Then there exists some cusp $\mathfrak{b}\in\mathfrak{A}$ such that the point $x$ is in the cuspidal zone of the cusp $\mathfrak{b}$. By the definition of the truncation operator, if $g_x\in SL_2(\mathbb{R})$ is a representative of $x\in X_0(q)$ in some fundamental domain, then
\[
|E_{\mathfrak{a},n}(x,1/2 + it) - \Lambda^TE_{\mathfrak{a},n}(x,1/2 + it)| = |c_{\mathfrak{a},\mathfrak{b},n}(\sigma_{\mathfrak{b}}^{-1}\gamma g_x,1/2 + it)|,
\]
where $c_{\mathfrak{a},\mathfrak{b},n}(*,1/2 + it)$ is the constant term of the Fourier series of $E_{\mathfrak{a},n}(*,1/2 + it)$, when expanded around the cusp $\mathfrak{b}$, and $\gamma\in\Gamma_0(q)$ is chosen such that $y(\sigma_{\mathfrak{b}}^{-1}\gamma g_x) = h(g_x) = h(x)$. By definition (see Section~\ref{section:norm_bound}), one has
\[
c_{\mathfrak{a},\mathfrak{b},n}(x,1/2 + it) = e^{in\theta(x)}\left(\delta_{\mathfrak{a} = \mathfrak{b}}y(x)^{1/2 + it} + \phi_{\mathfrak{a},\mathfrak{b}}(1/2 + it)\alpha(n,1/2 + it)y(x)^{1/2 - it}\right).
\]
Since $|\phi_{\mathfrak{a},\mathfrak{b}}(1/2 + it)| \le 1$, and $|\alpha(n,1/2 + it)| = 1$ (see Claim~\ref{claim:truncated_eisenstein_norm_bound} and Proposition~\ref{proposition:scattering_matrix} for reference), we find that
\[
|c_{\mathfrak{a},\mathfrak{b},n}(\sigma_{\mathfrak{b}}^{-1}\gamma g_x,1/2 + it)| \ll h(x)^{1/2}.
\]
By the triangle and mean inequality,
\[
|E_{\mathfrak{a},n}(x,1/2 + it)|^2 \ll |\Lambda^TE_{\mathfrak{a},n}(x,1/2 + it)|^2 + |E_{\mathfrak{a},n}(x,1/2 + it) - \Lambda^TE_{\mathfrak{a},n}(x,1/2 + it)|^2,
\]
plugging in our bounds for $|\Lambda^TE_{\mathfrak{a},n}(x,1/2 + it)|^2$ and $|E_{\mathfrak{a},n}(x,1/2 + it) - \Lambda^TE_{\mathfrak{a},n}(x,1/2 + it)|^2$, we obtain
\[
|E_{\mathfrak{a},n}(x,1/2 + it)|^2 \ll_{\epsilon} q^{\epsilon}\left(\lambda^{1/2 + \epsilon} + |n|^{1 + \epsilon}\right)\left(y(x) + y(x)^{-1}\right) + h(x).
\]
Finally, we note that $h(x) = \max_{\mathfrak{b}}\max_{\gamma\in\Gamma_0(q)}\{y(\sigma_{\mathfrak{b}}^{-1}\gamma x)\}$, while
\[
\sigma_{\mathfrak{b}}^{-1}\gamma = 
\begin{pmatrix}
\sqrt{w_{\mathfrak{b}}}^{-1}& 0\\
0& \sqrt{w_{\mathfrak{b}}}
\end{pmatrix}\tau,
\]
where $w_{\mathfrak{b}}$ is the width of the cusp $\mathfrak{b}$ and $\tau\in SL_2(\mathbb{Z})$. Since for all $\tau\in SL_2(\mathbb{Z})$, $y(\tau x) \ll y(x) + y(x)^{-1}$, and since for all $z\in \mathbb{H}$,
\[
y\left(\begin{pmatrix}
\sqrt{w_{\mathfrak{b}}}^{-1}& 0\\
0& \sqrt{w_{\mathfrak{b}}}
\end{pmatrix}z
\right) = y(z)/w_{\mathfrak{b}},
\]
and since for all cusps $\mathfrak{b}\in\mathfrak{A}$, $w_{\mathfrak{b}} \ge 1$, we find that $h(x) \ll y(x) + y(x)^{-1}$. 

Plugging this back, our proof is complete.
\end{proof}
This proves our Main Theorem, see Theorem~\ref{theorem:main_theorem}.

\section{Bounding the norm of the truncated Eisenstein series}
\label{section:norm_bound}
In this section we prove Claim~\ref{claim:truncated_eisenstein_norm_bound}, used in the proof of Theorem~\ref{theorem:sobolev_bound}.
\begin{claim}
\label{claim:truncated_eisenstein_norm_bound}
One has for all $n\in 2\mathbb{Z}$ and all $\epsilon > 0$
\[
||E_{\mathfrak{a},n}(*,1/2 + it)||_P^2 \ll_{\epsilon} q^{\epsilon}\left(1 + \log(1 + |n/2|) + \log(1 + |t|)\right).
\]
\end{claim}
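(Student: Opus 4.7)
The plan is to evaluate $\|\Lambda^T E_{\mathfrak{a},n}(*,1/2+it)\|_2^2$ explicitly via a weight-$n$ version of the Maass--Selberg inner product formula, then bound each piece using the explicit shape of the constant term recalled just above the statement, namely
\[
c_{\mathfrak{a},\mathfrak{b},n}(g,1/2+it) = e^{in\theta(g)}\Bigl(\delta_{\mathfrak{a}=\mathfrak{b}}\,y(g)^{1/2+it} + \phi_{\mathfrak{a},\mathfrak{b}}(1/2+it)\,\alpha(n,1/2+it)\,y(g)^{1/2-it}\Bigr).
\]
Combined with $|\alpha(n,1/2+it)|=1$ and $|\phi_{\mathfrak{a},\mathfrak{b}}(1/2+it)|\le 1$ (Proposition~\ref{proposition:scattering_matrix}), this puts the weight-$n$ case on the same footing as the classical spherical case, with the only new arithmetic coming from the derivatives of $\alpha(n,s)$.

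Concretely, I would first compute, for nearby distinct parameters $s,s'$ near $1/2+it$, the pairing $\langle \Lambda^T E_{\mathfrak{a},n}(\cdot,s),\Lambda^T E_{\mathfrak{a},n}(\cdot,s')\rangle$ by unfolding one factor against the $\Gamma_0(q)$-summation defining the other: the integral over $X_0(q)$ splits into an unfolded piece over a half-strip at each cusp $\mathfrak{b}$, and a correction over $h(g)>T$ where the constant term has been subtracted. Orthogonality in the $x$-variable collapses everything to the constant Fourier coefficients, and the $\theta$-integral trivializes because both series have the same weight $n$. The result is a Maass--Selberg type identity of the usual shape,
\[
\sum_{\mathfrak{b}}\frac{T^{s+\bar{s}'-1}(\ldots) - T^{1-s-\bar{s}'}(\ldots) + T^{s-\bar{s}'}(\ldots) - T^{\bar{s}'-s}(\ldots)}{\text{linear combinations of } s\pm\bar{s}'},
\]
where the ``$(\ldots)$'' are products of Kronecker deltas, $\phi_{\mathfrak{a},\mathfrak{b}}$, and $\alpha(n,\cdot)$ evaluated at $s$ or $\bar{s}'$.

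Second, I would pass to the diagonal $s'\to s=1/2+it$. The off-diagonal ratios have finite limits; the diagonal ratios $0/0$ require L'Hôpital, producing $\log T$ plus logarithmic derivatives of $\phi_{\mathfrak{a},\mathfrak{a}}(s)\alpha(n,s)$ evaluated at $s=1/2+it$. Since $T$ is bounded ($T\le 4e^{2\pi}$ by the important remark on page~\pageref{remark:important}), every pure $T$-term contributes $O(1)$. What survives is a bound of the form
\[
\|E_{\mathfrak{a},n}(*,1/2+it)\|_P^2 \ll 1 + \Bigl|\tfrac{\phi'_{\mathfrak{a},\mathfrak{a}}}{\phi_{\mathfrak{a},\mathfrak{a}}}(1/2+it)\Bigr| + \Bigl|\tfrac{\partial_s\alpha(n,s)}{\alpha(n,s)}\Big|_{s=1/2+it}\Bigr|,
\]
plus off-diagonal terms controlled by $|\phi_{\mathfrak{a},\mathfrak{b}}|\le 1$.

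Third, I would bound these two logarithmic derivatives. The factor $\alpha(n,s)$ is a ratio of gamma functions coming from the weight-raising operator, so digamma/Stirling gives $|(\partial_s\alpha/\alpha)(1/2+it)|\ll \log(1+|n/2|)+\log(1+|t|)$. For $\phi_{\mathfrak{a},\mathfrak{a}}$, squarefreeness of $q$ is essential: it allows the scattering matrix to be written explicitly as a finite product over primes $p\mid q$ of a local factor, times a global ratio of completed Dirichlet $L$-functions. Standard bounds for $L'/L$ on the critical line then yield $|\phi'_{\mathfrak{a},\mathfrak{a}}/\phi_{\mathfrak{a},\mathfrak{a}}(1/2+it)|\ll_\epsilon q^\epsilon(1+\log(1+|t|))$, which is the main obstacle and the reason the hypothesis on $q$ enters. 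Assembling the three ingredients produces the claimed bound.
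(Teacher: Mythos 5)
Your overall strategy is the paper's: a weight-$n$ Maass--Selberg relation (the paper states it as Theorem~\ref{theorem:maass_selberg}; your unfolding derivation is the standard proof of it), followed by bounds on derivatives of $\alpha(n,s)$ and of the scattering entries using the explicit squarefree formula of Proposition~\ref{proposition:scattering_matrix}. However, two steps as you describe them do not go through. First, after L'H\^opital the surviving derivative term is not the diagonal logarithmic derivative plus harmless off-diagonal remnants: the $0/0$ ratio is the one with denominator $s+\bar s'-1$, whose numerator involves $\sum_{\mathfrak{b}}\alpha(n,s)\phi_{\mathfrak{a}\mathfrak{b}}(s)\overline{\alpha(n,s')\phi_{\mathfrak{a}\mathfrak{b}}(s')}$ (it vanishes on the diagonal precisely because unitarity gives $\sum_{\mathfrak{b}}|\phi_{\mathfrak{a}\mathfrak{b}}|^2=1$), so differentiating produces the full row sum $\sum_{\mathfrak{b}}\partial_t\bigl(\alpha\phi_{\mathfrak{a}\mathfrak{b}}\bigr)\overline{\alpha\phi_{\mathfrak{a}\mathfrak{b}}}$, i.e.\ the $(\mathfrak{a},\mathfrak{a})$ entry of $(\alpha\Phi)'(\alpha\Phi)^*$. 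The bound $|\phi_{\mathfrak{a}\mathfrak{b}}|\le 1$ only controls the undifferentiated factor; you still need $|\phi'_{\mathfrak{a}\mathfrak{b}}(1/2+it)|\ll \log(1+|t|)+\log^2(1+q)$ for \emph{every} cusp $\mathfrak{b}$, together with the count of cusps $2^{\omega(q)}\ll_\epsilon q^\epsilon$ to sum them. This is exactly the content of the paper's Claim~\ref{claim:second_term}; your explicit-product argument for the diagonal entry extends to all entries (the off-diagonal local factors $p^s-p^{1-s}$ contribute an extra $\sqrt{p}\log p/(p-1)$ per prime), but it must actually be carried out, and it is where most of the work and the squarefree hypothesis live.

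Second, the term with denominator $s-\bar s'\to 2it$ is not ``a pure $T$-term contributing $O(1)$'': for $|t|\le 1$ the trivial bound is $O(1/|t|)$, and the boundedness of $T$ does not remove the singularity at $t=0$. One needs to exploit that the numerator $\overline{\alpha\phi_{\mathfrak{a}\mathfrak{a}}}\,T^{2it}-\alpha\phi_{\mathfrak{a}\mathfrak{a}}\,T^{-2it}$ vanishes at $t=0$, e.g.\ via the mean value theorem, which reintroduces $\partial_t(\alpha\phi_{\mathfrak{a}\mathfrak{a}})$ and $\log T$ and yields $O\bigl(\log^2(1+q)+\log(1+|n/2|)\bigr)$; this is the paper's Claim~\ref{claim:third_term}. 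Both repairs use only tools you already invoke, so the plan is salvageable, but as written these two steps fail.
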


\begin{proof}
A critical identity used in the proof is the Maass-Selberg relations, which involve the scattering matrix.

The scattering matrix is the matrix of constant terms of the spherical Eisenstein series, that is the matrix $\Phi(s) = (\phi_{\mathfrak{a},\mathfrak{b}}(s))$, indexed by pairs of cusps of $X_0(q)$, such that for each pair of cusps $\mathfrak{a},\mathfrak{b}$, the Fourier expansion of the Eisenstein series $E_{\mathfrak{a},0}(*,1/2 + it)$ at the cusp $\mathfrak{b}$ has the form
\[
E_{\mathfrak{a},0}(\sigma_{\mathfrak{b}}g,1/2 + it) = \delta_{\mathfrak{a} = \mathfrak{b}}y^{1/2 + it} + \phi_{\mathfrak{a},\mathfrak{b}}(1/2 + it)y^{1/2 - it} + \text{ higher order terms}.
\]
It is easily deduced from the above expansion, using the raising and lowering operators, that the Fourier expansion of the weight $n$ Eisenstein series of the cusp $\mathfrak{a}$ at the cusp $\mathfrak{b}$ has the form
\[
E_{\mathfrak{a},n}(\sigma_{\mathfrak{b}}g,1/2 + it) = e^{in\theta}\left(\delta_{\mathfrak{a} = \mathfrak{b}}y^{1/2 + it} + \phi_{\mathfrak{a},\mathfrak{b}}(1/2 + it)\alpha(n,1/2 + it)y^{1/2 - it} + \text{ higher order terms}\right),
\]
where 
\[
\alpha(n,1/2 + it) = \dfrac{\Gamma(1/2 - it + |n/2|)}{\Gamma(1/2 - it)}\cdot\dfrac{\Gamma(1/2 + it)}{\Gamma(1/2 + it + |n/2|)}.
\]
\textbf{Remark}: clearly, $|\alpha(n,1/2 + it)| = 1$. 

In order to deduce Claim~\ref{claim:truncated_eisenstein_norm_bound} we require explicit expressions for the $\phi_{\mathfrak{a},\mathfrak{b}}(s)$. For squarefree $q$, these are given by
\begin{proposition}[see e.g. Cakoni, Chanillo and Fioralba~\cite{CCF}]
\label{proposition:scattering_matrix}
Let $q$ be a squarefree positive integer and define the matrix $N_p(s)$ by
\[
N_p(s) := (p^{2s} - 1)^{-1}
\begin{pmatrix}
p - 1& p^s - p^{1-s}\\
p^s - p^{1-s}& p - 1
\end{pmatrix}.
\]
Then, the scattering matrix, $\Phi(s)$, is given by
\[
\Phi(s) = \psi(s)\bigotimes_{p|q}N_p(s),
\]
where $\psi(s)$ is given by 
\[
\psi(1/2 + it) = \pi^{1/2}\dfrac{\Gamma(it)}{\Gamma(1/2 + it)}\dfrac{\zeta(2it)}{\zeta(1 + 2it)},
\]
and $\bigotimes$ denotes the matrix tensor product.
\end{proposition}
\textbf{Remark}: by the functional equation for $\zeta(s)$, the function $\psi(1/2 + it)$ is smooth for $t\in\mathbb{R}$, and $|\psi(1/2 + it)| = 1$.

\textbf{Remark}: the matrix $\Phi(s)$ is unitary on the critical line and equals to the identity at the central point (see Kubota~\cite[Page 43, Theorem 4.3.4]{KUBOTA}).

Adhering to the above notations, we may sometimes identify cusps as divisors of $q$, e.g. we may write $\mathfrak{a} = p_1\cdot...\cdot p_k$. 

For all $\mathfrak{a} \in\mathfrak{A}$ one has
\[
\phi_{\mathfrak{a},\mathfrak{a}}(s) = \psi(s)\prod_{p|q}\dfrac{p-1}{p^{2s} - 1}.
\]

We are ready to state the Maass-Selberg relations.
\begin{theorem}[Maass-Selberg relations]
\label{theorem:maass_selberg}
With all notations as above, one has
\[
||\Lambda^TE_{\mathfrak{a},n}(*,1/2 + it)||^2 =
2\log T - \sum_{\mathfrak{b}\in \mathfrak{A}}\dfrac{d}{dt}\left(\alpha(n,1/2 + it)\phi_{\mathfrak{a},\mathfrak{b}}(1/2 + it)\right)\overline{\alpha(n,1/2 + it)\phi_{\mathfrak{a},\mathfrak{b}}(1/2 + it)}
\]
\[
+ \dfrac{\overline{\alpha(n,1/2 + it)\phi_{\mathfrak{a},\mathfrak{a}}(1/2 + it)}T^{2it} - \alpha(n,1/2 + it)\phi_{\mathfrak{a},\mathfrak{a}}(1/2 + it)T^{-2it}}{2it}.
\]
\end{theorem}
\textbf{Remark}: for $t=0$ the corresponding Maass-Selberg relation is obtained by taking the limit.

The proof of Claim~\ref{claim:truncated_eisenstein_norm_bound} involves a very straight forward process of bounding the second and third terms of the right hand side of the Maass-Selberg relations.

We begin with the third term.
\begin{claim}
\label{claim:third_term}
\[
\left|\dfrac{\overline{\alpha(n,1/2 + it)\phi_{\mathfrak{a},\mathfrak{a}}(1/2 + it)}T^{2it} - \alpha(n,1/2 + it)\phi_{\mathfrak{a},\mathfrak{a}}(1/2 + it)T^{-2it}}{2it}\right| \ll \log^2(1 + q) + \log(1 + |n/2|).
\]
\end{claim}
\begin{proof}
By the definition of $\psi$ and $\alpha$, we have
\[
\overline{\alpha(n,1/2 + it)} = \alpha(n,1/2 - it),\quad \overline{\phi_{\mathfrak{a},\mathfrak{a}}} = \phi_{\mathfrak{a},\mathfrak{a}}(1/2 - it).
\]
Therefore, our objective reduces to bounding
\[
\left|\dfrac{\alpha(n,1/2 - it)\phi_{\mathfrak{a},\mathfrak{a}}(1/2 - it)T^{2it} - \alpha(n,1/2 + it)\phi_{\mathfrak{a},\mathfrak{a}}(1/2 + it)T^{-2it}}{2it}\right|.
\]
We start off by assuming that $|t| > 1$. By the remark after Theorem~\ref{theorem:maass_selberg}, $|\alpha(n,1/2 - it)\phi_{\mathfrak{a},\mathfrak{a}}(1/2 - it)T^{2it}| = 1$, so that
\[
\left|\dfrac{\overline{\alpha(n,1/2 + it)\phi_{\mathfrak{a},\mathfrak{a}}(1/2 + it)}T^{2it} - \alpha(n,1/2 + it)\phi_{\mathfrak{a},\mathfrak{a}}(1/2 + it)T^{-2it}}{2it}\right| \ll 1.
\]
Otherwise, assume $|t|\le 1$. By Lagrange's theorem, there exists some $|r| \le |t|$ such that
\[
\left|\dfrac{\alpha(n,1/2 - it)\phi_{\mathfrak{a},\mathfrak{a}}(1/2 - it)T^{2it} - \alpha(n,1/2 + it)\phi_{\mathfrak{a},\mathfrak{a}}(1/2 + it)T^{-2it}}{2it}\right| = \left|\dfrac{d}{dr}\alpha(n,1/2 + ir)\phi_{\mathfrak{a},\mathfrak{a}}(1/2 + ir)T^{-2ir}\right|.
\]
We may naively bound this expression by the absolute value of the supremum of the derivative of $\alpha(n,1/2 + ir)\phi_{\mathfrak{a},\mathfrak{a}}(1/2 + ir)T^{-2ir}$ by $r$ where $|r| < 1$. Clearly, we have
\[
\dfrac{d}{dr}\alpha(n,1/2 + ir)\phi_{\mathfrak{a},\mathfrak{a}}(1/2 + ir)T^{-2ir} = 
(\dfrac{d}{dr}\alpha(n,1/2 + ir))\phi_{\mathfrak{a},\mathfrak{a}}(1/2 + ir)T^{-2ir} + 
\]
\[
\alpha(n,1/2 + ir)(\dfrac{d}{dr}\phi_{\mathfrak{a},\mathfrak{a}}(1/2 + ir))T^{-2ir} - 2i\log T\alpha(n,1/2 + ir)\phi_{\mathfrak{a},\mathfrak{a}}(1/2 + ir)T^{-2ir} = (1) + (2) + (3).
\]
We begin by upper bounding $|(3)|$. Since $|\alpha(n,1/2 + ir)\phi_{\mathfrak{a},\mathfrak{a}}(1/2 + ir)T^{-2ir}| \le 1$, we have
\[
|(3)| = |2i\log T\alpha(n,1/2 + ir)\psi(1/2 + ir)T^{-2ir}| \ll \log T \ll 1.
\]
As for $|(2)|$, $|\alpha(n,1/2 + ir)| = |T^{-2ir}| = 1$, so that $|(2)| \le |\dfrac{d}{dr}\phi_{\mathfrak{a},\mathfrak{a}}(1/2 + ir)|$. By the definition of $\phi_{\mathfrak{a},\mathfrak{a}}(1/2 + ir)$, we have
\[
\phi_{\mathfrak{a},\mathfrak{a}}(1/2 + ir) = \psi(1/2 + ir)\prod_{p|q}\dfrac{p-1}{p^{1 + 2ir} - 1},
\]
and therefore
\[
\dfrac{d}{dr}\phi_{\mathfrak{a},\mathfrak{a}}(1/2 + ir) = (\dfrac{d}{dr}\psi(1/2 + ir))\prod_{p|q}\dfrac{p-1}{p^{1 + 2ir} - 1} \]
\[
+ \psi(1/2 + ir)\sum_{p_1|q}(\dfrac{d}{dr}(p_1^{1 + 2ir} - 1)^{-1})\prod_{p|q/p_1}(p^{1 + 2ir} - 1)^{-1}
\prod_{p|q}(p - 1) = (2.1) + (2.2).
\]
As for $(2.1)$, we notice that
\[
(\dfrac{d}{dr}\psi(1/2 + ir))\prod_{p|q}\dfrac{p-1}{p^{1 + 2ir} - 1} = \dfrac{\frac{d}{dr}\psi(1/2 + ir)}{\psi(1/2 + ir)}\phi_{\mathfrak{a},\mathfrak{a}}(1/2 + ir),
\]
and therefore, since $|\phi_{\mathfrak{a},\mathfrak{a}}(1/2 + ir)| \le 1$, we obtain the bound
\[
|(2.1)| \le \left|\dfrac{\frac{d}{dr}\psi(1/2 + ir)}{\psi(1/2 + ir)}\right|,
\]
which is the absolute value of the logarithmic derivative of the $\psi$ function. By the triangle inequality and the definition of $\psi$, we have
\[
\left|\dfrac{\psi'(1/2 + ir)}{\psi(1/2 + ir)}\right| \le \left|\dfrac{\Gamma(ir)'}{\Gamma(ir)} - \dfrac{\Gamma(1/2 + ir)'}{\Gamma(1/2 + ir)} + 2\dfrac{\zeta(2ir)'}{\zeta(2ir)} - 2\dfrac{\zeta(1 + 2ir)'}{\zeta(1 + 2ir)}\right|.
\]
Since $\Gamma(1/2 + ir)$ and $\zeta(2ir)$ are non-vanishing analytic functions for $|r|\le 1$, their logarithmic derivatives are analytic as well, and are, therefore, bounded by a constant on the compact set $|r| \le 1$. As for $\Gamma(ir)$ and $\zeta(1 + 2ir)$, the two functions have a simple pole at $r=0$, and are otherwise smooth and non-vanishing for $0 < |r| \le 1$. For the Gamma function this is true since it is smooth and nowhere vanishing. Thus, their ratio can be extended into a non-vanishing analytic function on $|r|\le 1$, and its logarithmic derivative is therefore bounded on $|r|\le 1$ as well. We conclude that 
\[
|\dfrac{\Gamma(ir)'}{\Gamma(ir)} - \dfrac{\Gamma(1/2 + ir)'}{\Gamma(1/2 + ir)} + 2\dfrac{\zeta(2ir)'}{\zeta(2ir)} - 2\dfrac{\zeta(1 + 2ir)'}{\zeta(1 + 2ir)}| \ll 1
\]
for $|r| < 1$, and therefore that $|(2.1)| \ll 1$. 

We proceed to analyze $(2.2)$. Since
\[
\dfrac{d}{dr}(p^{1+2ir} - 1)^{-1} = -2i\log p(p^{1+2ir} - 1)^{-2}p^{1+2ir},
\]
we may write $(2.2)$ as
\[
-2i\psi(1/2 + ir)\prod_{p|q}\dfrac{p-1}{p^{1 + 2ir} - 1}\sum_{p|q}\dfrac{p^{1+2ir}\log p}{p^{1+2ir} - 1} = -2i\phi_{\mathfrak{a},\mathfrak{a}}(1/2 + ir)\sum_{p|q}\dfrac{p^{1+2ir}\log p}{p^{1+2ir} - 1}.
\]
Therefore, since $|\phi_{\mathfrak{a},\mathfrak{a}}(1/2 + ir)| \le 1$, we may bound
\[
|(2.2)| \ll |\sum_{p|q}\dfrac{p^{1+2ir}\log p}{p^{1+2ir} - 1}| \le \sum_{p|q}\dfrac{p\log p}{p - 1} \ll \log^2(1 + q),
\]
where we bound $\log p$ by $\log(1 + q)$ instead of $\log q$ in order to accommodate the case where $q = 1$. Combining our bounds on $|(2.1)|$ and $|(2.2)|$ we obtain
\[
|(2)| \ll \log^2(1 + q).
\]
To handle $(1)$, we first observe that
\[
|(1)| \ll |\dfrac{d}{dr}\alpha(n,1/2 + ir)|,
\]
and thus we are left with bounding $|\dfrac{d}{dr}\alpha(n,1/2 + ir)|$. Since we shall need a bound on $|\dfrac{d}{dr}\alpha(n,1/2 + ir)|$ with $r$ arbitrary in the next section, we prove a more general claim.
\begin{claim}
\label{claim:alpha_bound}
For all $n\in\mathbb{Z}$ and $r\in\mathbb{R}$, one has
\[
|\dfrac{d}{dr}\alpha(n,1/2 + ir)| \ll 1 + \log(1 + |n/2|).
\]
\end{claim}
\begin{proof}
We recall the definition of $\alpha(n,s)$.
\[
\alpha(n,s) = \dfrac{\Gamma(s)}{\Gamma(1 - s)}\cdot\dfrac{\Gamma(1 - s + |n/2|)}{\Gamma(s + |n/2|)}.
\]
So that using the recurrence $\Gamma(s + 1) = s\Gamma(s)$, we have
\[
\alpha(n,s) = \prod_{k=0}^{|n|/2 - 1}\dfrac{1 - s + k}{s + k}.
\]
Therefore,
\[
\frac{d}{ds}\alpha(n,s) = \dfrac{\frac{d}{ds}\alpha(n,s)}{\alpha(n,s)}\cdot\alpha(n,s) = -\alpha(n,s)\cdot\sum_{k=0}^{|n|/2-1}\left(\dfrac{1 + 2k}{(1 - s + k)(s + k)}\right).
\]
For $r\in \mathbb{R}$, $|\alpha(n,s)| = 1$, and therefore,
\[
\left|\frac{d}{dr}\alpha(n,1/2 + ir)\right| \le \sum_{k=0}^{|n|/2-1}\left(\dfrac{1 + 2k}{(1/2 - ir + k)(1/2 + ir + k)}\right) = \sum_{k=0}^{|n|/2-1}\left(\dfrac{1 + 2k}{r^2 + (1/2 + k)^2}\right)
\]
\[
\le 2\sum_{k = 0}^{|n/2| - 1}\dfrac{1}{1/2 + k} \ll 1 + \log(1 + |n|/2),
\]
which is what we wanted to prove.
\end{proof}
Applying the previous claim, we find that $|(1)| \ll 1 + \log(1 + |n/2|)$.

Combining our bounds for $|(1)|, |(2)|$ and $|(3)|$, we conclude that for $|r| < 1$,
\[
|\dfrac{d}{dr}\alpha(n,1/2 + ir)\psi(1/2 + ir)T^{-2ir}| \ll \log^2(1 + q) + \log(1 + |n/2|),
\]
which completes the proof of Claim~\ref{claim:third_term}.
\end{proof}
Next, we would like to construct a bound for the second term in the Maass-Selberg formula, see Theorem~\ref{theorem:maass_selberg}.
\begin{claim}
\label{claim:second_term}
One has for all $t\in\mathbb{R}$ and $\epsilon > 0$
\[
\left|\sum_{\mathfrak{b}\in \mathfrak{A}}\dfrac{d}{dt}\left(\alpha(n,1/2 + it)\phi_{\mathfrak{a},\mathfrak{b}}(1/2 + it)\right)\overline{\alpha(n,1/2 + it)\phi_{\mathfrak{a},\mathfrak{b}}(1/2 + it)}\right| \ll_{\epsilon} q^{\epsilon}\left(1 + \log(1 + |t|) + \log(1 + |n/2|)\right).
\]
\end{claim}
\begin{proof}
Applying the triangle inequality, we seek to bound
\[
\sum_{\mathfrak{b}\in \mathfrak{A}}\left|\dfrac{d}{dt}\left(\alpha(n,1/2 + it)\phi_{\mathfrak{a},\mathfrak{b}}(1/2 + it)\right)\overline{\alpha(n,1/2 + it)\phi_{\mathfrak{a},\mathfrak{b}}(1/2 + it)}\right|.
\]
Since $|\phi_{\mathfrak{a},\mathfrak{b}}(1/2 + it)| \le |\alpha(n,1/2 + it)| = 1$, we bound by
\[
\sum_{\mathfrak{b}\in \mathfrak{A}}\left|\dfrac{d}{dt}\left(\alpha(n,1/2 + it)\phi_{\mathfrak{a},\mathfrak{b}}(1/2 + it)\right)\right|.
\]
And since
\[
\dfrac{d}{dt}\left(\alpha(n,1/2 + it)\phi_{\mathfrak{a},\mathfrak{b}}(1/2 + it)\right) = (\dfrac{d}{dt}\alpha(n,1/2 + it))\phi_{\mathfrak{a},\mathfrak{b}}(1/2 + it) + \alpha(n,1/2 + it)(\dfrac{d}{dt}\phi_{\mathfrak{a},\mathfrak{b}}(1/2 + it)),
\]
we may bound
\[
|\dfrac{d}{dt}\left(\alpha(n,1/2 + it)\phi_{\mathfrak{a},\mathfrak{b}}(1/2 + it)\right)| \le |\dfrac{d}{dt}\alpha(n,1/2 + it)| + |\dfrac{d}{dt}\phi_{\mathfrak{a},\mathfrak{b}}(1/2 + it)|.
\]
By Claim~\ref{claim:alpha_bound}, we have
\[
|\dfrac{d}{dt}\alpha(n,1/2 + it)| \ll 1 + \log(1 + |n/2|),
\]
as for $|\dfrac{d}{dt}\phi_{\mathfrak{a},\mathfrak{b}}(1/2 + it)|$, we first write out $\phi_{\mathfrak{a},\mathfrak{b}}(s)$
\[
\phi_{\mathfrak{a},\mathfrak{b}}(s) = \psi(s)\prod_{p|q}(p^{2s} - 1)^{-1}\prod_{\substack{p|\mathfrak{a}\wedge p|\mathfrak{b}\\ \vee \\ p\nmid\mathfrak{a}\wedge p\nmid\mathfrak{b}}}(p-1)
\prod_{\substack{p\mid \mathfrak{a}\wedge p\nmid\mathfrak{b}\\\vee \\
p\mid\mathfrak{b}\wedge p\nmid\mathfrak{a}}}(p^s - p^{1-s}),
\]
and therefore
\[
\phi_{\mathfrak{a},\mathfrak{b}}'(s) = \psi'(s)\prod_{p|q}(p^{2s} - 1)^{-1}\prod_{\substack{p|\mathfrak{a}\wedge p|\mathfrak{b}\\ \vee \\ p\nmid\mathfrak{a}\wedge p\nmid\mathfrak{b}}}(p-1)
\prod_{\substack{p\mid \mathfrak{a}\wedge p\nmid\mathfrak{b}\\\vee \\
p\mid\mathfrak{b}\wedge p\nmid\mathfrak{a}}}(p^s - p^{1-s})
\]
\[
+ \psi(s)\sum_{p_1|q}\dfrac{d}{ds}(p_1^{2s} - 1)^{-1}\prod_{p|(q/p_1)}(p^{2s} - 1)^{-1}\prod_{\substack{p|\mathfrak{a}\wedge p|\mathfrak{b}\\ \vee \\ p\nmid\mathfrak{a}\wedge p\nmid\mathfrak{b}}}(p-1)
\prod_{\substack{p\mid \mathfrak{a}\wedge p\nmid\mathfrak{b}\\\vee \\
p\mid\mathfrak{b}\wedge p\nmid\mathfrak{a}}}(p^s - p^{1-s})
\]
\[
+ \psi(s)\prod_{p|q}(p^{2s} - 1)^{-1}\prod_{\substack{p|\mathfrak{a}\wedge p|\mathfrak{b}\\ \vee \\ p\nmid\mathfrak{a}\wedge p\nmid\mathfrak{b}}}(p-1)
\sum_{\substack{p_1\mid \mathfrak{a}\wedge p_1\nmid\mathfrak{b}\\\vee \\
p_1\mid\mathfrak{b}\wedge p_1\nmid\mathfrak{a}}}\dfrac{d}{ds}(p_1^s - p_1^{1-s})
\prod_{\substack{p\mid \mathfrak{a}\wedge p\nmid\mathfrak{b}\\\vee \\
p\mid\mathfrak{b}\wedge p\nmid\mathfrak{a}\\ p\neq p_1}}(p^s - p^{1-s})
\]
\[
= (1) + (2) + (3).
\]
We begin by estimating $(1)$. Note that
\[
(1) = \dfrac{\psi'(s)}{\psi(s)}\phi_{\mathfrak{a},\mathfrak{b}}(s),
\]
and since $\Phi(s)$ is a unitary matrix on the critical line, $|\phi_{\mathfrak{a},\mathfrak{b}}(s)| \le 1$, which allows us to bound $|(1)|$ by the logarithmic derivative of $\psi(s)$. In Claim~\ref{claim:third_term} we analyzed the logarithmic derivative of $\psi(s)$ for $s = 1/2 + it$ with $|t| \le 1$. The same calculation is valid for all $t\in\mathbb{R}$ and yields,
\[
\left|\dfrac{\psi'(1/2 + it)}{\psi(1/2 + it)}\right| = 
\left|\dfrac{\Gamma(it)'}{\Gamma(it)} - \dfrac{\Gamma(1/2 + it)'}{\Gamma(1/2 + it)} + 2\dfrac{\zeta(2it)'}{\zeta(2it)} - 2\dfrac{\zeta(1 + 2it)'}{\zeta(1 + 2it)}\right|.
\]
The same reasoning as in Claim~\ref{claim:third_term} implies that this expression is upper bounded by a constant for $|t| \le 1$. As for $|t| > 1$, we appeal to the logarithmic derivative of the functional equation of the Riemann zeta function, which reads off as
\[
\dfrac{\zeta'(s)}{\zeta(s)} + \dfrac{\zeta'(1-s)}{\zeta(1-s)} = \log\pi -\dfrac{1}{2}\dfrac{\Gamma'(\frac{s}{2})}{\Gamma(\frac{s}{2})}-\dfrac{1}{2}\dfrac{\Gamma'(\frac{1-s}{2})}{\Gamma(\frac{1-s}{2})}.
\]
Substituting $s = 2it$, and inserting the above identity into our expression for the absolute value of the logarithmic derivative of $\psi(1/2 + it)$, we obtain
\[
\left|\dfrac{\psi'(1/2 + it)}{\psi(1/2 + it)}\right| =
\left|- \dfrac{\Gamma(1/2 + it)'}{\Gamma(1/2 + it)} - \dfrac{\Gamma(1/2 - it)'}{\Gamma(1/2 - it)} - 2\dfrac{\zeta(1 + 2it)'}{\zeta(1 + 2it)} - 2\dfrac{\zeta(1 - 2it)'}{\zeta(1 - 2it)} + 2\log \pi\right|.
\]
Using the bound $|\dfrac{\zeta(1 + 2it)'}{\zeta(1 + 2it)}| = O(\log |t|)$, valid for $|t| \ge 1$, we simplify the above into
\[
\left|\dfrac{\psi'(1/2 + it)}{\psi(1/2 + it)}\right| \ll 1 + \log|t| + \left|\dfrac{\Gamma(1/2 + it)'}{\Gamma(1/2 + it)}\right| \ll 1 + \log(1 + |t|).
\]
Therefore,
\[
|(1)| \le \left|\dfrac{\psi'(1/2 + it)}{\psi(1/2 + it)}\right| \ll 1 + \log(1 + |t|).
\]
We proceed to analyze $(2)$.
\[
(2) = \psi(s)\sum_{p_1|q}\dfrac{d}{ds}(p_1^{2s} - 1)^{-1}\prod_{p|(q/p_1)}(p^{2s} - 1)^{-1}\prod_{\substack{p|\mathfrak{a}\wedge p|\mathfrak{b}\\ \vee \\ p\nmid\mathfrak{a}\wedge p\nmid\mathfrak{b}}}(p-1)
\prod_{\substack{p\mid \mathfrak{a}\wedge p\nmid\mathfrak{b}\\\vee \\
p\mid\mathfrak{b}\wedge p\nmid\mathfrak{a}}}(p^s - p^{1-s}).
\]
Notice that
\[
\dfrac{d}{ds}(p_1^{2s} - 1)^{-1} = -2\log p_1(p_1^{2s} - 1)^{-2}p_1^{2s}
\]
and so we may simplify the above expression into
\[
-\sum_{p|q}\dfrac{p^{2s}\log p}{p^{2s} - 1}\phi_{\mathfrak{a},\mathfrak{b}}(s),
\]
and since $|\phi_{\mathfrak{a},\mathfrak{b}}(1/2 + ir)| \le 1$, substituting back $s = 1/2 + it$, we upper bound $|(2)|$ by
\[
\sum_{p|q}|\dfrac{p^{1 + 2it}\log p}{p^{1 + 2it} - 1}| \le \sum_{p|q}|\dfrac{p\log p}{p - 1}| \ll \log^2 (1 + q).
\]
which implies that $(2) \ll \log^2(1 + q)$. 

We proceed to analyze $(3)$. Recall the definition
\[
(3) = \psi(s)\prod_{p|q}(p^{2s} - 1)^{-1}\prod_{\substack{p|\mathfrak{a}\wedge p|\mathfrak{b}\\ \vee \\ p\nmid\mathfrak{a}\wedge p\nmid\mathfrak{b}}}(p-1)
\sum_{\substack{p_1\mid \mathfrak{a}\wedge p_1\nmid\mathfrak{b}\\\vee \\
p_1\mid\mathfrak{b}\wedge p_1\nmid\mathfrak{a}}}\dfrac{d}{ds}(p_1^s - p_1^{1-s})
\prod_{\substack{p\mid \mathfrak{a}\wedge p\nmid\mathfrak{b}\\\vee \\
p\mid\mathfrak{b}\wedge p\nmid\mathfrak{a}\\ p\neq p_1}}(p^s - p^{1-s}).
\]
Since $\left|\dfrac{p-1}{p^{2s} - 1}\right|,\left|\dfrac{p^s - p^{1-s}}{p^{2s} - 1}\right| \ll 1$, and since $|\psi(s)| = 1$, we upper bound
\[
|(3)| \le \sum_{\substack{p\mid \mathfrak{a}\wedge p\nmid\mathfrak{b}\\\vee \\
p\mid\mathfrak{b}\wedge p\nmid\mathfrak{a}}}\left|\dfrac{\dfrac{d}{ds}(p^s - p^{1-s})}{p^{2s} - 1}\right|.
\]
Taking derivative, we obtain
\[
\dfrac{d}{ds}(p^s - p^{1-s}) = \log p(p^s + p^{1-s}),
\]
and substituting $s = 1/2 + ir$, we may upper bound $|\log p(p^{1/2 + it} + p^{1/2 - it})| \ll \sqrt{p}\log p$. 

Therefore
\[
|(3)| \le \sum_{\substack{p\mid \mathfrak{a}\wedge p\nmid\mathfrak{b}\\\vee \\
p\mid\mathfrak{b}\wedge p\nmid\mathfrak{a}}}\dfrac{\sqrt{p}\log p}{p - 1} \ll \log (1 + q).
\]
Summing the bounds on $|(1)|$, $|(2)|$ and $|(3)|$, we obtain
\[
|\dfrac{d}{dt}\phi_{\mathfrak{a},\mathfrak{b}}(1/2 + it)| \ll \log(1 + |t|) + \log^2(1 + q),
\]
combined with the bound for $|\dfrac{d}{dt}\alpha(n,1/2 + it)|$, we obtain
\[
|\dfrac{d}{dt}\left(\alpha(n,1/2 + it)\phi_{\mathfrak{a},\mathfrak{b}}(1/2 + it)\right)| \ll \log(1 + |t|) + \log^2(1 + q) + \log(1 + |n/2|).
\]
Summing over the set of cusps $\mathfrak{A}$, which has cardinality $O(q^{\epsilon})$, we obtain the bound
\[
\sum_{\mathfrak{b}\in \mathfrak{A}}\left|\dfrac{d}{dt}\left(\alpha(n,1/2 + it)\phi_{\mathfrak{a},\mathfrak{b}}(1/2 + it)\right)\right| \ll_{\epsilon} q^{\epsilon}\left(1 + \log(1 + |t|) + \log(1 + |n/2|)\right),
\]
which completes the proof of Claim~\ref{claim:second_term}.
\end{proof}
Combining the bounds obtained in Claim~\ref{claim:third_term} and Claim~\ref{claim:second_term}, we find that
\[
||\Lambda^TE_{\mathfrak{a},n}(*,1/2 + it)||^2 \ll_{\epsilon} q^{\epsilon}\left(1 + \log(1 + |n/2|) + \log(1 + |t|)\right),
\]
which completes the proof of Claim~\ref{claim:truncated_eisenstein_norm_bound}.
\end{proof}

\section{Bounding the $P$-right shift norm}
In this section we prove Claim~\ref{claim:right_shift_bound}, used in the proof of Theorem~\ref{theorem:sobolev_bound}. Before we are ready to state Claim~\ref{claim:right_shift_bound}, we require the definition of a ``small translate".

\begin{definition}[Small Translate]
We say that an element $g\in SL_2(\mathbb{R})$ is a small translate for $X_0(q)$ if for all $x\in X_0(q)$ one has
\[
h(x)/2 \le h(xg) \le 2h(x),
\]
and if, moreover, $h(x) > 2$, then $x$ and $xg$ are in the same cuspidal zone (see Definition~\ref{definition:same_zone}) for truncation height 1.
\end{definition}
Now that we have defined a property of elements in $SL_2(\mathbb{R})$, we would like to prove that there exist elements which exhibit this property. In fact, we would like to show that there exists a $\delta > 0$ such that the corresponding $\delta$-rectangle, $R(\delta)$, (see Definition~\ref{definition:delta_rectangle}) is a set consisting of small translates.

\begin{claim}
\label{claim:delta_existence}
There exists a $\delta > 0$, such that all $g \in R(\delta)$ are small translates.
\end{claim}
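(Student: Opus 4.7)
The plan is to exploit a clean multiplicative formula for how the Iwasawa $y$-coordinate transforms under right translation, and then pass to a uniform statement via compactness of $K = SO_2(\mathbb{R})$.

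First I would record the key identity. If $h \in SL_2(\mathbb{R})$ has Iwasawa decomposition $h = n a k$ and $g \in SL_2(\mathbb{R})$ is arbitrary, then writing $kg = n' a' k'$ for the Iwasawa decomposition of $kg$, and using the fact that $N$ is normalized by $A$, one obtains $hg = n(a n' a^{-1}) \cdot a a' \cdot k'$, so that $y(hg) = y(h) \cdot y(kg)$. Apply this with $h = \sigma_{\mathfrak{a}}^{-1} \gamma x$ (for any cusp $\mathfrak{a}$, any $\gamma \in \Gamma_0(q)$, any representative of $x$): the ratio $y(\sigma_{\mathfrak{a}}^{-1}\gamma x g) / y(\sigma_{\mathfrak{a}}^{-1}\gamma x)$ equals $y(k_{\mathfrak{a},\gamma,x}\, g)$, where $k_{\mathfrak{a},\gamma,x} \in K$ is the Iwasawa $K$-part of $\sigma_{\mathfrak{a}}^{-1}\gamma x$.

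Next I would extract a uniform bound. The map $(k,g) \mapsto y(kg)$ is continuous on $K \times SL_2(\mathbb{R})$ and equals $1$ on $K \times \{I\}$, because $y(k) = 1$ for every $k \in K$. Since $K$ is compact, uniform continuity yields a $\delta > 0$ such that for all $k \in K$ and all $g \in R(\delta)$ one has $1/2 \le y(kg) \le 2$. Combined with the identity from the previous step, this gives, uniformly in $\mathfrak{a}, \gamma$ and $x$,
\[
\tfrac{1}{2}\, y(\sigma_{\mathfrak{a}}^{-1}\gamma x) \le y(\sigma_{\mathfrak{a}}^{-1}\gamma x g) \le 2\, y(\sigma_{\mathfrak{a}}^{-1}\gamma x).
\]
Taking the maximum over $\mathfrak{a}$ and $\gamma$ on all three sides yields $h(x)/2 \le h(xg) \le 2 h(x)$, which is the first condition in the definition of a small translate.

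Finally I would verify the cuspidal-zone condition. Suppose $h(x) > 2$, and let $(\mathfrak{a}, \gamma \Gamma_{\mathfrak{a}})$ be the pair realising $h(x)$; by the uniqueness remark following Definition~\ref{definition:height_function}, this pair is unique since $h(x) > 1$. From the inequality above applied to this particular $(\mathfrak{a}, \gamma)$ we get $y(\sigma_{\mathfrak{a}}^{-1}\gamma x g) \ge h(x)/2 > 1$, so in particular $h(xg) > 1$ and hence the cusp realising $h(xg)$ is uniquely determined. Since the pair $(\mathfrak{a}, \gamma \Gamma_{\mathfrak{a}})$ already witnesses a value strictly greater than $1$ for the point $xg$, this uniqueness forces it to be the winning pair for $xg$ as well. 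Therefore $x$ and $xg$ lie in the cuspidal zone of the same cusp $\mathfrak{a}$ with respect to truncation height $1$, completing the verification.

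The only nontrivial ingredient is the continuity/compactness step together with the transformation formula for $y$; once those are in place the rest is formal. No delicate estimate on the implied $\delta$ is needed since the claim is purely qualitative.
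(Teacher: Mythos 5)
Your proposal is correct and follows essentially the same route as the paper: the identity $y(hg) = y(h)\,y(kg)$, obtained from $N$ being normalized by $A$, is exactly the paper's computation $y(g_xg) = y(x)y(g_\epsilon)$, and your compactness-of-$K$ uniformity step plays the role of the paper's construction of $\delta$ with $KR(\delta)\subseteq R(\epsilon)$. The concluding cuspidal-zone argument (a witnessed value $>1$ at the cusp realising $h(x)$ forces $x$ and $xg$ into the same zone) relies on the same standard disjointness fact about cuspidal zones above height $1$ that the paper itself uses implicitly, so it is at the same level of rigor.
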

\begin{proof}
We would like to show that there exists a $\delta > 0$ such that for all $x\in X_0(q)$:
\[
\forall g\in R(\delta): h(x)/2 \le h(xg) \le 2h(x),
\]
and that if, moreover, $h(x) > 2$, then $x, xg$ are in the same cuspidal zone.

We reduce our claim to the following statement. We would like to find some $\delta > 0$ such that for all $x\in X_0(q)$, $\sigma_{\mathfrak{a}}$ a scaling matrix and $\gamma\in\Gamma_0(q)$, one has
\[
\forall g\in R(\delta): y(\sigma_{\mathfrak{a}}^{-1}\gamma g_x)/2 \le y(\sigma_{\mathfrak{a}}^{-1}\gamma g_xg) \le 2y(\sigma_{\mathfrak{a}}^{-1}\gamma g_x),
\]
where $g_x$ is an arbitrary representative of $x\in X_0(q)$. We note that under this statement, if $\sigma_{\mathfrak{b}}$ and $\gamma'\in\Gamma$ are such that $h(x) = y(\sigma_{\mathfrak{b}}^{-1}\gamma'g_x) > 2$, then, in particular, $y(\sigma_{\mathfrak{b}}^{-1}\gamma' g_xg) \ge h(x)/2 > 1$, and it follows that both $x$ and $xg$ are in the cuspidal zone of the same cusp (cusp $\mathfrak{b}$).

The rest of our claim also follows from the reduction. As for the upper bound, we note that if 
\[
\max_{\mathfrak{a}}\max_{\gamma\in\Gamma_0(q)}\{y(\sigma_{\mathfrak{a}}^{-1}\gamma g_xg)\}
\]
is attained for the cusp $\mathfrak{b}$ and $\gamma'\in\Gamma_0(q)$, then it follows that
\[
h(xg) \le 2y(\sigma_{\mathfrak{b}}^{-1}\gamma' g_x) \le 2\max_{\mathfrak{a}}\max_{\gamma\in\Gamma_0(q)}\{y(\sigma_{\mathfrak{a}}^{-1}\gamma g_x)\} = 2h(x).
\]
As for the lower bound, if $h(x) = y(\sigma_{\mathfrak{b}^{-1}}\gamma' g_x)$, then
\[
h(x)/2 \le y(\sigma_{\mathfrak{b}^{-1}}\gamma' g_x g) \le \max_{\mathfrak{a}}\max_{\gamma\in\Gamma_0(q)}\{y(\sigma_{\mathfrak{a}}^{-1}\gamma g_x g)\} = h(xg).
\]

Possibly replacing $x$ by $(\sigma_{\mathfrak{a}}^{-1}\gamma)^{-1}x$ without loss of generality, it suffices to find a $\delta > 0$ such that for all $x\in X_0(q)$, one has
\[
\forall g\in R(\delta): y(x)/2 \le y(xg) \le 2y(x).
\]
Let $g_x$ again denote a representative for $x$ in $SL_2(\mathbb{R})$. Since $K$ stabilizes the identity and acts continuously via left translations on $SL_2(\mathbb{R})$, and since for all $\epsilon > 0$, $\text{Int}(R(\epsilon))$ is an open neighborhood of the identity, for each $k$ we have some $\delta_k' > 0$, guaranteed to exist by continuity, such that
\[
k\text{Int}(R(\delta_k'))\subseteq \text{Int}(R(\epsilon)).
\]
We denote by $\delta_k > 0$ the supremum over all positive real numbers with such property. It is clear that the function $\delta : K \longrightarrow \mathbb{R}$ defined by $\delta(k) = \delta_k$ for all $k$ is continuous, and since it is defined over a compact set and attains only positive values, it has a positive minimum, we define it to be $\delta > 0$. Possibly replacing $\delta$ by $\delta / 2 > 0$, we may assume
\[
KR(\delta) \subseteq R(\epsilon).
\]
Next, if $g_x = n_x a_x k_x$ is the Iwasawa decomposition of $g_x$ and if $g\in R(\delta)$, then $g_xg = n_xa_x (k_xg) \in n_xa_x R(\epsilon)$. Now, if $g_{\epsilon} \in R(\epsilon)$ has Iwasawa decomposition $n_{\epsilon} a_{\epsilon} k_{\epsilon}$, then
\[
n_xa_xn_{\epsilon}a_{\epsilon}k_{\epsilon} = n_xn_{\epsilon}'a_xa_{\epsilon}k_{\epsilon},
\]
since $N$ is normal in $NA$. Therefore, $y(g_xg) = y(a_xa_{\epsilon}) = y(x)y(g_{\epsilon})$. If we choose $\epsilon < 1/2$, then 
\[
y(g_x)/2 \le y(g_xg) \le 2y(g_x),
\]
which is what we wanted to show. Since this is true for all $x$, $g$ is a small translate. Since this is true for all $g \in R(\delta)$, we found that there exists a $\delta > 0$, namely $\delta_{\epsilon}$ for some $0 < \epsilon \le 1/2$, such that all $g\in R(\delta)$ are small translates.
\end{proof}

We are now ready to state Claim~\ref{claim:right_shift_bound}.

\begin{claim}
\label{claim:right_shift_bound}
For all small translates $g'\in R(\delta)$ with $\delta > 0$ as in Claim~\ref{claim:delta_existence}, one has
\[
||\rho(g')||_P^2 \ll 1.
\]
where $||\rho(g')||_P = \sup_{v\in V}\dfrac{||\rho(g')v||_P}{||v||_P}$, $V$ is the Eisenstein even principal series (irreducible) representation, and $||\cdot||_P$ is as in Definition~\ref{definition:p_norm}.
\end{claim}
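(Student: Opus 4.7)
The plan is to split the defining integral $\|\rho(g')v\|_P^2=\int_{X_0(q)}|\Lambda^T\rho(g')v(x)|^2\,d\chi(x)$ into a bulk region $\{h(x)\le T\}$ and a cuspidal region $\{h(x)>T\}$, analyze each via a change of variables $y=xg'$ (which is measure-preserving since $d\chi$ is right $SL_2(\mathbb{R})$-invariant), and reduce to truncated norms of $v$ itself at slightly different truncation heights. The small-translate hypothesis on $g'$ ensures that heights are preserved up to a factor of $2$ and that cuspidal zones do not mix.

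The crucial input is the identity
\[
\mathrm{CT}_{\mathfrak{b}}\bigl(\rho(g')v\bigr)(x)=\mathrm{CT}_{\mathfrak{b}}(v)(xg')
\]
for $x$ in the cuspidal zone of $\mathfrak{b}$. I would prove this by direct computation in Iwasawa coordinates: if $\sigma_{\mathfrak{b}}^{-1}\gamma x$ has coordinates $(u_0,y_0,\theta_0)$, then $\sigma_{\mathfrak{b}}^{-1}\gamma xg'$ has coordinates $(u_0+y_0u'',\,y_0y'',\,\theta'')$ with $u'',y'',\theta''$ depending only on $\theta_0$ and $g'$ (from the Iwasawa decomposition of $k_{\theta_0}g'$). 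Averaging over the horocycle variable is unaffected by the translation $u_0\mapsto u_0+y_0u''$ because $v$ is periodic in this variable under the cusp stabilizer; both sides then collapse to $\mathrm{CT}_{\mathfrak{b}}(v)(y_0y'',\theta'')$.

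With this identity in hand, for $h(x)>T$ (hence $x\in$ cusp zone of some $\mathfrak{b}$), $\Lambda^T\rho(g')v(x)=v(xg')-\mathrm{CT}_{\mathfrak{b}}v(xg')$; since $g'$ is a small translate and $T>2$, $xg'$ is in the same cusp zone with $h(xg')\ge h(x)/2>T/2$, so this equals $\Lambda^{T/2}v(xg')$. Changing variables gives cuspidal contribution $\le\|\Lambda^{T/2}v\|_{L^2}^2$. For $h(x)\le T$, $\Lambda^T\rho(g')v(x)=v(xg')$, and changing variables yields an integral over $\{y:h(yg'^{-1})\le T\}\subseteq\{y:h(y)\le 2T\}$, where $v=\Lambda^{2T}v$; the bulk contribution is $\le\|\Lambda^{2T}v\|_{L^2}^2$. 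Thus $\|\rho(g')v\|_P^2\ll\|\Lambda^{T/2}v\|_{L^2}^2+\|\Lambda^{2T}v\|_{L^2}^2$.

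The remaining step, which is the main technical obstacle, is to establish the comparability $\|\Lambda^{T'}v\|_{L^2}^2\asymp\|\Lambda^T v\|_{L^2}^2$ for $T'\in\{T/2,2T\}$ and $v\in V$. Decomposing $v=\sum c_n e_n$ in the pure-weight basis, $L^2$-orthogonality of distinct weights (inherited from orthogonality of $e^{in\theta}$) gives $\|\Lambda^{T'}v\|^2=\sum|c_n|^2\|\Lambda^{T'}e_n\|^2$ at every $T'$. The Maass–Selberg formula (Theorem~\ref{theorem:maass_selberg}) shows that $\|\Lambda^{T'}e_n\|^2-\|\Lambda^T e_n\|^2=2\log(T'/T)+O(1)$ uniformly in $n,t$, where the $O(1)$ comes from explicit oscillatory terms of the form $(\bar X T'^{2it}-XT'^{-2it})/(2it)$ whose differences across $T'\in[T/2,2T]$ are bounded using the elementary estimate $|(2^{2it}-1)/(2it)|\ll 1$. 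Combined with the Parseval cross-term identity (the inner product of $\Lambda^T v$ with $\mathrm{CT}\,v\cdot\mathbf{1}_{\text{annulus}}$ vanishes by horocycle orthogonality), one extracts $\|\Lambda^{T/2}v\|^2\le\|\Lambda^T v\|^2$ for free, while controlling $\|\Lambda^{2T}v\|^2$ reduces to showing a uniform lower bound $\|\Lambda^T e_n\|^2\gg 1$ — which is where the explicit formulas from Proposition~\ref{proposition:scattering_matrix} and the specific choice $T\in\{e^{2\pi},4e^{2\pi}\}$ (making $2\log T$ large enough to dominate the bounded perturbations) must be used. Combining these estimates yields $\|\rho(g')v\|_P^2\ll\|v\|_P^2$ uniformly over $g'\in R(\delta)$.
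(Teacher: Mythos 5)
Your first half (splitting $\|\Lambda^T\rho(g')v\|^2$ into the bulk $\{h\le T\}$ and the cuspidal region, using the small-translate property to keep heights within a factor $2$ and zones unmixed, commuting the constant term past the right translation, and arriving at $\|\rho(g')v\|_P^2\ll\|\Lambda^{T/2}v\|^2+\|\Lambda^{2T}v\|^2\le\|\Lambda^{T}v\|^2+\|\Lambda^{2T}v\|^2$) is essentially the paper's Claim~\ref{claim:bound_on_shifted_p_norm}. The gap is in your final step. From Maass--Selberg (Theorem~\ref{theorem:maass_selberg}) you correctly get the \emph{additive} comparison $\|\Lambda^{2T}e_n\|^2=\|\Lambda^{T}e_n\|^2+O(1)$ uniformly in $n,t$ (the derivative term cancels and the oscillatory terms differ by $O(\log(T'/T))$), but to convert this into the \emph{multiplicative} bound $\|\Lambda^{2T}v\|^2\ll\|\Lambda^{T}v\|^2$ you invoke a uniform lower bound $\|\Lambda^{T}e_n\|^2\gg1$, and this is not available --- in fact it fails. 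Your parenthetical justification is also not viable: the Maass--Selberg ``perturbations'' are not $O(1)$; the scattering-derivative term has size up to $\log(1+|n/2|)+\log(1+|t|)+\log^2(1+q)$ with uncontrolled sign, so the fixed quantity $2\log T=4\pi$ cannot dominate it. Concretely, when $\alpha(n,1/2+it)\phi_{\mathfrak{a},\mathfrak{a}}(1/2+it)$ is close to $-1$ and $|t|$ is small, the constant term $1+\alpha\phi_{\mathfrak{a},\mathfrak{a}}y^{-2it}$ stays of size $O(|t|)$ on all of $[1,T]$, and the truncated norm degenerates: the paper's own Claim~\ref{claim:p_norm_lower_bound} only proves $P(e_n)\gg t^2$ in that range, and at level $1$, weight $0$, one even has $E(z,1/2)\equiv0$ (since $\phi(1/2)=-1$ and the functional equation gives $E(\cdot,1/2)=\phi(1/2)E(\cdot,1/2)$), so $\|\Lambda^T e_n\|^2\to0$ as $t\to0$. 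In this regime an additive $O(1)$ error is enormous compared with the norm, and your argument collapses exactly where the difficulty lies.

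The paper's route avoids needing any absolute lower bound: Claim~\ref{claim:reduction_to_constant_term_ratio} lower-bounds $\|\Lambda^T v\|^2$ by the constant-term contribution on $1\le y\le T$ alone and reduces everything (via the mediant inequality over the weight decomposition) to bounding, for each fixed $n$, the \emph{ratio}
\[
\frac{\int_1^{T'}\bigl|1+\alpha(n,1/2+it)\phi_{\mathfrak{a},\mathfrak{a}}(1/2+it)y^{-2it}\bigr|^2\,\frac{dy}{y}}{\int_1^{T}\bigl|1+\alpha(n,1/2+it)\phi_{\mathfrak{a},\mathfrak{a}}(1/2+it)y^{-2it}\bigr|^2\,\frac{dy}{y}},
\]
and Claims~\ref{claim:constant_term_ratio}--\ref{claim:s_and_I_relations} show this ratio is $\ll1$ even when both integrals are tiny: one proves $\sup_{y\in[1,e]}|1+\alpha\phi_{\mathfrak{a},\mathfrak{a}}y^{-2it}|\gg|t|$, compares each integral to the square of the sup over its interval, and checks the sup barely grows from $T$ to $T'$ because the extra oscillation contributes only $O(|t|\log(T'/T))$ relative to a sup that is already $\gg|t|$; the case $|t|>1$ is handled by the equidistribution-of-phase argument using $T\ge e^{2\pi}$. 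So to repair your proof you must replace the step ``$\|\Lambda^Te_n\|^2\gg1$'' by a per-$n$ ratio (or relative) estimate of this kind; as written, the proposal has a genuine gap at its main technical point.
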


\begin{proof}
As a first step towards the proof of Claim~\ref{claim:right_shift_bound}, we prove the upper bound.
\begin{claim}
\label{claim:bound_on_shifted_p_norm}
For all small translates $g'\in R(\delta)$ with $\delta > 0$ as in Claim~\ref{claim:delta_existence} and $\phi\in V$, one has
\[
||\rho(g')\phi||_P^2 \le ||\Lambda^{T}\phi||^2 + ||\Lambda^{2T}\phi||^2.
\]
\end{claim}
\begin{proof}
Let $g'\in R(\delta)$ and $\phi\in V$ be as in the claim. Let $\mathcal{F}(\Gamma_0(q)\char`\\SL_2(\mathbb{R}))$ be the standard fundamental domain for $X_0(q)$, which we denote by $\mathcal{F}$ in short where there is no confusion with the $q=1$ case.

By definition,
\[
||\rho(g')\phi||_P^2 = \int_{\mathcal{F}}|\Lambda^T(\rho(g')\phi)(g)|^2d\chi(g) = \int_{\mathcal{F}_T}|\Lambda^T(\rho(g')\phi)(g)|^2d\chi(g) + \int_{\mathcal{F}\setminus \mathcal{F}_T}|\Lambda^T(\rho(g')\phi)(g)|^2d\chi(g),
\]
where $\mathcal{F}_T$ is the compact subset of the fundamental domain, defined by
\[
\mathcal{F}_T = \{x\in \mathcal{F}|h(x)\le T\},
\]
where $h(x)$ is the height function, as in Definition~\ref{definition:height_function}. We call $\mathcal{F}_T$ the truncated fundamental domain.

Note that the truncation operator acts as the identity on the truncated fundamental domain, so that
\[
\int_{\mathcal{F}_T}|(\Lambda^T(\rho(g')\phi))(g)|^2d\chi(g) = 
\int_{\mathcal{F}_T}|\phi(gg')|^2d\chi(g).
\]
We would like to find $T' = T'(g',T)$ such that
\[
\int_{\mathcal{F}_T}|\phi(gg')|^2d\chi(g) \le \int_{\mathcal{F}_{T'}}|\phi(g)|^2d\chi(g).
\]
This would follow if we fix $T'\ge T$ such that for all $g$ satisfying $h(g) \le T$ one has also $h(gg') \le T'$.

Since $g'\in R(\delta)$, by Claim~\ref{claim:delta_existence}, for all $g\in SL_2(\mathbb{R})$, one has
\[
h(gg') \le 2h(g).
\]
Implying that it suffices to choose $T' = 2T$.

In this case, we have
\[
\int_{\mathcal{F}_T}|(\Lambda^T(\rho(g')\phi))(g)|^2d\chi(g) \le \int_{\mathcal{F}_{2T}}|\phi(g)|^2d\chi(g) \le \int_{\mathcal{F}}|\Lambda^{2T}\phi(g)|^2d\chi(g) = ||\Lambda^{2T}(\phi)||^2.
\]
Next, assume $h(g) > T$. Since $g'$ is a small translate, $h(gg') \ge T/2$. Since $T > 2$, by Claim~\ref{claim:delta_existence}, $g$ and $gg'$ are in the same cuspidal zone when one decreases the truncation height from $T$ to $T/2$. 

Therefore, 
\[
\int_{\mathcal{F}\setminus \mathcal{F}_T}|(\Lambda^T(\rho(g')\phi))(g)|^2d\chi(g) \le \int_{\mathcal{F}\setminus \mathcal{F}_{T/2}}|(\Lambda^{T/2}(\phi))(g)|^2d\chi(g) \le ||\Lambda^{T/2}\phi||^2,
\]
where the last inequality is obtained by extending the integral over $\mathcal{F}_{T/2}$.

We note that since $||\Lambda^{T}\phi||^2 - ||\Lambda^{T/2}\phi||^2$ is exactly equal to the the integral of the absolute value squared of the constant term on the domain $\mathcal{F}_T\setminus\mathcal{F}_{T/2}$, which is a nonnegative number, we trivially have
\[
||\Lambda^{T/2}\phi||^2 \le ||\Lambda^{T}\phi||^2.
\]
which completes the proof.
\end{proof}

As a next step, we would like to reduce the problem to the analysis of the ratio of the integrals of the constant terms.
\begin{claim}
\label{claim:reduction_to_constant_term_ratio}
For all $g'\in R(\delta)$, with all notations as above, one has
\[
||\rho(g')||_P^2 \ll 1 + \sup_{n\in 2\mathbb{Z}}
\dfrac{\int_1^{T'}\left|1 + \alpha(n,1/2 + it)\phi_{\mathfrak{a},\mathfrak{a}}(1/2 + it)y^{- 2it}\right|^2\dfrac{dy}{y}}{\int_1^{T}\left|1 + \alpha(n,1/2 + it)\phi_{\mathfrak{a},\mathfrak{a}}(1/2 + it)y^{- 2it}\right|^2\dfrac{dy}{y}}.
\]
\end{claim}
\begin{proof}
By the previous claim, we have
\[
\dfrac{||\Lambda^{T}\rho(g')\phi||^2}{||\Lambda^{T}\phi||^2} \le 1 + \dfrac{||\Lambda^{T'}\phi||^2}{||\Lambda^{T}\phi||^2},
\]
where $T\le T' \le 2T$. As a next step, we note that
\[
||\Lambda^{T'}\phi||^2 = ||\Lambda^{T}\phi||^2 + \int_{\mathcal{F}_{T'}\setminus\mathcal{F}_{T}}|a_{\phi,0}(g)|^2d\chi(g),
\]
where $\mathcal{F}_{T'}\setminus\mathcal{F}_{T}$ is the rectangular subdomain of the cuspidal zones between heights $T'$ and $T$, and $a_{\phi,0}(g)$ denotes the constant term of
$\phi$. 

Writing out $\mathcal{F}_{T'}\setminus\mathcal{F}_{T} = \cup_{\mathfrak{b}}R_{\mathfrak{b}}$, where $R_{\mathfrak{b}}$ is the cuspidal zone of the cusp $\mathfrak{b}$ between heights $T$ and $T'$, denoting by $\sigma_{\mathfrak{b}}$ the scaling matrix, sending the cusp at $\infty$ to the cusp at $\mathfrak{b}$, we have
\[
\int_{\mathcal{F}_{T'}\setminus\mathcal{F}_{T}}|a_{\phi,0}(g)|^2d\chi(g) = \sum_{\mathfrak{b}}\int_{R_{\mathfrak{b}}}|a_{\phi,\mathfrak{b},0}(g)|^2d\chi(g),
\]
where $a_{\phi,\mathfrak{b},0}(g)$ is the constant term of $\phi$ when expanded around the cusp $\mathfrak{b}$. We further write,
\[
\sum_{\mathfrak{b}}\int_{\sigma_{\mathfrak{b}}^{-1}R_{\mathfrak{b}}}|a_{\phi,\mathfrak{b},0}(\sigma_{\mathfrak{b}}g)|^2d\chi(g) = 
\int_0^1\int_T^{T'}\int_0^{2\pi}|a_{\phi,\mathfrak{b},0}(\sigma_{\mathfrak{b}}g)|^2\dfrac{dxdyd\theta}{y^2}.
\]
Let $\phi = \sum_{n}\beta_ne_n$, then, since the constant term of $e_n$, when expanded around the cusp $\mathfrak{b}$, is given by the formula
\[
c_{\mathfrak{a},\mathfrak{b},0}(\sigma_{\mathfrak{b}}g) = e^{in\theta}\left(\delta_{\mathfrak{a} = \mathfrak{b}}y^{1/2 + it} + \alpha(n,1/2 + it)\phi_{\mathfrak{a},\mathfrak{b}}(1/2 + it)y^{1/2 - it}\right),
\]
by orthogonality relations in the $\theta$-coordinate, we obtain
\[
\int_0^1\int_T^{T'}\int_0^{2\pi}|a_{\phi,\mathfrak{b},0}(\sigma_{\mathfrak{b}}g)|^2\dfrac{dxdyd\theta}{y^2} = 2\pi\sum_n|\beta_n|^2\int_T^{T'}\left|\delta_{\mathfrak{a} = \mathfrak{b}}y^{1/2 + it} + \alpha(n,1/2 + it)\phi_{\mathfrak{a},\mathfrak{b}}(1/2 + it)y^{1/2 - it}\right|^2\dfrac{dy}{y^2}.
\]
Hence,
\[
\dfrac{||\rho(g')\phi||_P^2}{||\phi||_P^2} \le 2 + \dfrac{2\pi\sum_{\mathfrak{b}}\sum_n|\beta_n|^2\int_T^{T'}\left|\delta_{\mathfrak{a} = \mathfrak{b}}y^{1/2 + it} + \alpha(n,1/2 + it)\phi_{\mathfrak{a},\mathfrak{b}}(1/2 + it)y^{1/2 - it}\right|^2\dfrac{dy}{y^2}}{||\Lambda^T\phi||^2}.
\]
We lower bound the denominator by
\[
||\Lambda^T\phi||^2 = \int_{\mathcal{F}}|(\Lambda^T\phi)(g)|^2d\chi(g) \ge \int_{\mathcal{F}_T\setminus\mathcal{F}_1}|(\Lambda^T\phi)(g)|^2d\chi(g),
\]
and using orthogonality relations in the $x$-coordinate, together with the rectangularity of the domain, following a similar computation for the numerator, we obtain the lower bound
\[
||\Lambda^T\phi||^2\ge 2\pi\sum_{\mathfrak{b}}\sum_n|\beta_n|^2\int_1^{T}\left|\delta_{\mathfrak{a} = \mathfrak{b}}y^{1/2 + it} + \alpha(n,1/2 + it)\phi_{\mathfrak{a},\mathfrak{b}}(1/2 + it)y^{1/2 - it}\right|^2\dfrac{dy}{y^2},
\]
plugging the lower bound back in the ratio while extending the integral in the numerator over the set $[1,T']\supseteq[T,T']$, we obtain the upper bound
\[
\dfrac{||\rho(g')\phi||_P^2}{||\phi||_P^2} \le 2 + \dfrac{\sum_n|\beta_n|^2\sum_{\mathfrak{b}}\int_1^{T'}\left|\delta_{\mathfrak{a} = \mathfrak{b}}y^{1/2 + it} + \alpha(n,1/2 + it)\phi_{\mathfrak{a},\mathfrak{b}}(1/2 + it)y^{1/2 - it}\right|^2\dfrac{dy}{y^2}}{\sum_n|\beta_n|^2\sum_{\mathfrak{b}}\int_1^{T}\left|\delta_{\mathfrak{a} = \mathfrak{b}}y^{1/2 + it} + \alpha(n,1/2 + it)\phi_{\mathfrak{a},\mathfrak{b}}(1/2 + it)y^{1/2 - it}\right|^2\dfrac{dy}{y^2}}.
\]
A simple application of the mediant inequality shows that
\[
\dfrac{\sum_n|\beta_n|^2\sum_{\mathfrak{b}}\int_1^{T'}\left|\delta_{\mathfrak{a} = \mathfrak{b}}y^{1/2 + it} + \alpha(n,1/2 + it)\phi_{\mathfrak{a},\mathfrak{b}}(1/2 + it)y^{1/2 - it}\right|^2\dfrac{dy}{y^2}}{\sum_n|\beta_n|^2\sum_{\mathfrak{b}}\int_1^{T}\left|\delta_{\mathfrak{a} = \mathfrak{b}}y^{1/2 + it} + \alpha(n,1/2 + it)\phi_{\mathfrak{a},\mathfrak{b}}(1/2 + it)y^{1/2 - it}\right|^2\dfrac{dy}{y^2}}
\]
\[
\le \sup_{n\in 2\mathbb{Z}: \beta_n\neq 0}
\dfrac{\sum_{\mathfrak{b}}\int_1^{T'}\left|\delta_{\mathfrak{a} = \mathfrak{b}}y^{1/2 + it} + \alpha(n,1/2 + it)\phi_{\mathfrak{a},\mathfrak{b}}(1/2 + it)y^{1/2 - it}\right|^2\dfrac{dy}{y^2}}{\sum_{\mathfrak{b}}\int_1^{T}\left|\delta_{\mathfrak{a} = \mathfrak{b}}y^{1/2 + it} + \alpha(n,1/2 + it)\phi_{\mathfrak{a},\mathfrak{b}}(1/2 + it)y^{1/2 - it}\right|^2\dfrac{dy}{y^2}}
\]
\[
\sup_{n\in 2\mathbb{Z}}
\dfrac{\sum_{\mathfrak{b}}\int_1^{T'}\left|\delta_{\mathfrak{a} = \mathfrak{b}}y^{1/2 + it} + \alpha(n,1/2 + it)\phi_{\mathfrak{a},\mathfrak{b}}(1/2 + it)y^{1/2 - it}\right|^2\dfrac{dy}{y^2}}{\sum_{\mathfrak{b}}\int_1^{T}\left|\delta_{\mathfrak{a} = \mathfrak{b}}y^{1/2 + it} + \alpha(n,1/2 + it)\phi_{\mathfrak{a},\mathfrak{b}}(1/2 + it)y^{1/2 - it}\right|^2\dfrac{dy}{y^2}}.
\]
Note that
\[
\sum_{\mathfrak{b}}\int_1^{U}\left|\delta_{\mathfrak{a} = \mathfrak{b}}y^{1/2 + it} + \alpha(n,1/2 + it)\phi_{\mathfrak{a},\mathfrak{b}}(1/2 + it)y^{1/2 - it}\right|^2\dfrac{dy}{y^2} = 
\]
\[
\int_1^{U}|1 + y^{-2it}\alpha(n,1/2 + it)\phi_{\mathfrak{a},\mathfrak{a}}(1/2 + it)|^2\dfrac{dy}{y} + \sum_{\mathfrak{b}\neq\mathfrak{a}}\int_{1}^{U}|\phi_{\mathfrak{a},\mathfrak{b}}(1/2 + it)|^2\dfrac{dy}{y},
\]
and because of the unitarity of $\Phi(1/2 + it)$, the second term is simplified into
\[
\log U\sum_{\mathfrak{b}\neq\mathfrak{a}}|\phi_{\mathfrak{a},\mathfrak{b}}(1/2 + it)|^2 = \log U\left(1 - |\phi_{\mathfrak{a},\mathfrak{a}}(1/2 + it)|^2\right).
\]
Plugging back into the numerator and the denominator, while applying the mediant inequality, we obtain
\[
\dfrac{||\rho(g')\phi||_P^2}{||\phi||_P^2} \le 2 + \dfrac{\log T'}{\log T} + \sup_{n\in 2\mathbb{Z}}
\dfrac{\int_1^{T'}\left|1 + \alpha(n,1/2 + it)\phi_{\mathfrak{a},\mathfrak{a}}(1/2 + it)y^{- 2it}\right|^2\dfrac{dy}{y}}{\int_1^{T}\left|1 + \alpha(n,1/2 + it)\phi_{\mathfrak{a},\mathfrak{a}}(1/2 + it)y^{- 2it}\right|^2\dfrac{dy}{y}},
\]
which completes the proof since $T \gg 1$ and $T' \ll 1$.
\end{proof}
\begin{claim}
\label{claim:constant_term_ratio}
One has for all $n\in 2\mathbb{Z}$,
\[
\dfrac{\int_1^{T'}|1 + y^{-2it}\alpha(n,1/2 + it)\phi_{\mathfrak{a},\mathfrak{a}}(1/2 + it)|^2\dfrac{dy}{y}}{\int_1^{T}|1 + y^{-2it}\alpha(n,1/2 + it)\phi_{\mathfrak{a},\mathfrak{a}}(1/2 + it)|^2\dfrac{dy}{y}} \ll 1.
\]
\end{claim}
\begin{proof}
This is proven by case analysis. Assume first that $|\phi_{\mathfrak{a},\mathfrak{a}}(1/2 + it)| < 1/2$. In this case, by the triangle inequality we find that
\[
1/2 \le |1 + y^{-2it}\alpha(n,1/2 + it)\phi_{\mathfrak{a},\mathfrak{a}}(1/2 + it)| \le 3/2,
\]
which implies that
\[
\dfrac{\int_{1}^{T'}|1 + y^{-2it}\alpha(n,1/2 + it)\phi_{\mathfrak{a},\mathfrak{a}}(1/2 + it)|^2\dfrac{dy}{y}}{\int_1^{T}|1 + y^{-2it}\alpha(n,1/2 + it)\phi_{\mathfrak{a},\mathfrak{a}}(1/2 + it)|^2\dfrac{dy}{y}} \ll \dfrac{\log T'}{\log T} \ll 1,
\]
and the claim follows. Therefore, we may assume throughout the rest of the proof that $|\phi_{\mathfrak{a},\mathfrak{a}}(1/2 + it)| \ge 1/2$. We analyze this case separately for small and large values of $|t|$, starting from the simpler case, which is the case where $|t| > 1$. 

\begin{claim}
\label{claim:large_t_bound}
Assume $|\phi_{\mathfrak{a},\mathfrak{a}}(1/2 + it)| \ge 1/2$ and $|t| > 1$, then
\[
\int_{1}^{T}|1 + y^{-2it}\alpha(n,1/2 + it)\phi_{\mathfrak{a},\mathfrak{a}}(1/2 + it)|^2\dfrac{dy}{y} \gg 1.
\]
\end{claim}
\begin{proof}
Denote by $y_0\in [1,\infty)$ a real number for which $1/2 \le \alpha(1/2 + it,n)\phi_{\mathfrak{a},\mathfrak{a}}(1/2 + it)y_0^{-2it} \in \mathbb{R}$. For such $y_0$,
\[
|1 + y_0^{-2it}\alpha(n,1/2 + it)\phi_{\mathfrak{a},\mathfrak{a}}(1/2 + it)|^2 \ge 9/4.
\]
Possibly replacing $y_0\in [1,\infty)$ by some larger value, we may assume that there exists an interval around $y_0$ within $[1,\infty)$ with boundaries $y_1,y_2$, and the property that for all $y\in [y_1,y_2]$, one has 
\[
|\arg(y^{-2it}\alpha(n,1/2 + it)\phi_{\mathfrak{a},\mathfrak{a}}(1/2 + it))| \le \dfrac{\pi}{2}.
\]
For all such $y$, one has
\[
|1 + y^{-2it}\alpha(n,1/2 + it)\psi(1/2 + it)|^2 \ge 1.
\]
We may trivially lower bound
\[
\int_{y_1}^{y_2}|1 + y^{-2it}\alpha(n,1/2 + it)\phi_{\mathfrak{a},\mathfrak{a}}(1/2 + it)|^2\dfrac{dy}{y} \ge
\int_{y_1}^{y_2}\dfrac{dy}{y} = \log\dfrac{y_2}{y_1}.
\]
Next, we would like to estimate $\log\dfrac{y_2}{y_1}$. By assumption,
\[
\arg(y_2^{-2it}\alpha(n,1/2 + it)\phi_{\mathfrak{a},\mathfrak{a}}(1/2 + it)) - \arg(y_1^{-2it}\alpha(n,1/2 + it)\phi_{\mathfrak{a},\mathfrak{a}}(1/2 + it)) = \pi,
\]
while, in fact,
\[
|\arg(y_2^{-2it}\alpha(n,1/2 + it)\phi_{\mathfrak{a},\mathfrak{a}}(1/2 + it)) - \arg(y_1^{-2it}\alpha(n,1/2 + it)\phi_{\mathfrak{a},\mathfrak{a}}(1/2 + it))| = |2t\log \dfrac{y_2}{y_1}|.
\]
Therefore,
\[
\log \dfrac{y_2}{y_1} = \dfrac{\pi}{2|t|}.
\]
Plugging this back into $\int_{y_1}^{y_2}|1 + y^{-2it}\alpha(n,1/2 + it)\phi_{\mathfrak{a},\mathfrak{a}}(1/2 + it)|^2\dfrac{dy}{y}$, we obtain the lower bound
\[
\int_{y_1}^{y_2}|1 + y^{-2it}\alpha(n,1/2 + it)\phi_{\mathfrak{a},\mathfrak{a}}(1/2 + it)|^2\dfrac{dy}{y} \ge \dfrac{\pi}{2|t|}.
\]
Next, we note that a sufficient condition to have $[t]$-disjoint intervals $[y_1^i,y_2^i]_{i=1}^{[t]}\subseteq[1,T]$, is that when $y$ traverses through the interval $[1,T]$, $y^{-2it}$ traverses an arc of length at least $3\pi[t]$ on the unit circle. In this case, for each $1\le i\le [t]$, we have a unique interval $I_{3\pi}^i$ in which $y^{-2it}$ traverses an arc of length $3\pi$. For each interval $I_{3\pi}^i$ there exists a subinterval $I_{2\pi}^i$ in which $y^{-2it}$ traverses an arclength of $2\pi$, with the property that for each $y\in I_{2\pi}^i$ there exists an interval $[y-\delta_1,y+\delta_2]\subseteq I_{3\pi}^i$, such that $y^{-2it}$ traverses the arc $[\arg(y^{-2it})-\pi/2,\arg(y^{-2it})+\pi/2]$. Therefore, there exists $y_0^i\in I_{2\pi}^i\subseteq I_{3\pi}^i$ such that $\arg((y_0^i)^{-2it}) = \theta_0$, and a subinterval $[y_1^i,y_2^i]\subseteq I_{3\pi}^i$ such that $y^{-2it}$ obtains all arguments in $[\theta_0-\pi/2,\theta_0+\pi/2]$. 

Next, we claim that a sufficient condition to having $y^{-2it}$ traverse an arc of length $3\pi[t]$ as $y$ traverses $[1,T]$, it is sufficient to have $T$ large enough such that
\[
|-2t\log T|\ge 3\pi[t] \iff T\ge e^{\frac{3\pi[t]}{2|t|}},
\]
and since by assumption $T \ge e^{2\pi}$, the inequality holds.

Therefore, since
\[
\int_{1}^{T}|1 + y^{-2it}\alpha(n,1/2 + it)\phi_{\mathfrak{a},\mathfrak{a}}(1/2 + it)|^2\dfrac{dy}{y} \ge 
\sum_{j=1}^{[t]}\int_{y_1^j}^{y_2^{j}}|1 + y^{-2it}\alpha(n,1/2 + it)\phi_{\mathfrak{a},\mathfrak{a}}(1/2 + it)|^2\dfrac{dy}{y} \ge \sum_{j=1}^{[t]}\dfrac{\pi}{2|t|} \gg 1,
\]
our claim holds.
\end{proof}

We find that
\[
\dfrac{\int_{1}^{T'}|1 + y^{-2it}\alpha(n,1/2 + it)\phi_{\mathfrak{a},\mathfrak{a}}(1/2 + it)|^2\dfrac{dy}{y}}{\int_1^{T}|1 + y^{-2it}\alpha(n,1/2 + it)\phi_{\mathfrak{a},\mathfrak{a}}(1/2 + it)|^2\dfrac{dy}{y}} \ll \log T' \ll 1,
\]
as $\int_{1}^{T'}|1 + y^{-2it}\alpha(n,1/2 + it)\phi_{\mathfrak{a},\mathfrak{a}}(1/2 + it)|^2\dfrac{dy}{y} \ll \log T' \ll 1$, trivially.

Next, we assume that $0 < |t|\le 1$. Our first objective will be to bound the supremum of $|1 + \alpha(1/2 + it,n)\phi_{\mathfrak{a},\mathfrak{a}}(1/2 + it)y^{-2it}|$ from below for $y\in [1,e]\subseteq [1,T],[1,T']$. 

\begin{claim}
\label{claim:integrand_sup_bound}
Let $0 < |t|\le 1$, $y\in [1,e]$ and assume $|\phi_{\mathfrak{a},\mathfrak{a}}(1/2 + it)| \ge 1/2$, then
\[
\sup_{y\in [1,e]}\left|1 + \alpha(1/2 + it,n)\phi_{\mathfrak{a},\mathfrak{a}}(1/2 + it)y^{-2it}\right| \gg |t|.
\]
\end{claim}
\begin{proof}
We write
\[
|1 + \alpha(1/2 + it,n)\phi_{\mathfrak{a},\mathfrak{a}}(1/2 + it)y^{-2it}| = 
\]
\[
\left|1 + \alpha(1/2 + it,n)\phi_{\mathfrak{a},\mathfrak{a}}(1/2 + it) - \alpha(1/2 + it,n)\phi_{\mathfrak{a},\mathfrak{a}}(1/2 + it) + \alpha(1/2 + it,n)\phi_{\mathfrak{a},\mathfrak{a}}(1/2 + it)y^{-2it}\right|
\]
\[
\ge
|\alpha(1/2 + it,n)\phi_{\mathfrak{a},\mathfrak{a}}(1/2 + it)||y^{-2it} - 1| - |1 + \alpha(1/2 + it,n)\phi_{\mathfrak{a},\mathfrak{a}}1/2 + it)|
\]
\[
= |\phi_{\mathfrak{a},\mathfrak{a}}(1/2 + it)||y^{-2it} - 1| - |1 + \alpha(1/2 + it,n)\phi_{\mathfrak{a},\mathfrak{a}}(1/2 + it)|
\]
\[
\ge |y^{-2it} - 1|/2 - |1 + \alpha(1/2 + it,n)\phi_{\mathfrak{a},\mathfrak{a}}(1/2 + it)|
\]
since $|\alpha(1/2 + it,n)| = 1$ and by assumption, $|\phi_{\mathfrak{a},\mathfrak{a}}(1/2 + it)| \ge 1/2$. Now, either $|1 + \alpha(1/2 + it,n)\phi_{\mathfrak{a},\mathfrak{a}}(1/2 + it)| > |t|/2$, and then the supremum is too (substitute $y=1$), or the above expression is at least
\[
|y^{-2it} - 1|/2 - |t|/2.
\]
Observe that
\[
|y^{-2it} - 1| = \sqrt{2 - y^{-2it} - y^{2it}} = \sqrt{2}\sqrt{1 - \cos(2t\log y)} = 2\sin(|t|\log y).
\]
Assuming $y\in [1,e]$, then the Taylor expansion of $\sin(|t|\log y)$ in $|t|\log y$ is a Leibniz series, and is lower bounded by
\[
\sin(|t|\log y) \ge |t|\log y - \dfrac{(|t|\log y)^3}{6} \ge \dfrac{5|t|\log y}{6},
\]
so that for $y\in [1,e]$, one has
\[
|y^{-2it} - 1|/2 \ge 5(|t|\log y)/6.
\]
In particular, for $y=e$, we find that
\[
|y^{-2it} - 1|/2 - |t|/2 \ge |t|/3,
\]
so that in any case $\sup_{y\in [1,e]}|1 + \alpha(1/2 + it,n)\phi_{\mathfrak{a},\mathfrak{a}}(1/2 + it)y^{-2it}| \ge |t|/3$, which is what we wanted to show.
\end{proof}
Next, we notice that there is a relationship between $\sup_{y\in [1,V]}|1 + \alpha(1/2 + it,n)\phi_{\mathfrak{a},\mathfrak{a}}(1/2 + it)y^{-2it}|$ and $\int_1^V|1 + \alpha(1/2 + it,n)\phi_{\mathfrak{a},\mathfrak{a}}(1/2 + it)y^{-2it}|^2\dfrac{dy}{y}$, whenever $V \ge e$.

\begin{claim}
\label{claim:s_and_I_relations}
Denote by $s(V)$ the supremum $\sup_{y\in [1,V]}|1 + \alpha(1/2 + it,n)\phi_{\mathfrak{a},\mathfrak{a}}(1/2 + it)y^{-2it}|$, with $V \ge e$, and by $I(V)$ the value of the integral $\int_1^V|1 + \alpha(1/2 + it,n)\phi_{\mathfrak{a},\mathfrak{a}}(1/2 + it)y^{-2it}|^2\dfrac{dy}{y}$. Then,
\begin{itemize}
    \item $I(V) \le s^2(V)\log V$,
    \item $I(V) \gg s^2(V)$.
\end{itemize}
\end{claim}
\begin{proof}
The upper bound is trivial. As for the lower bound, let $y_0\in [1,V]$ be the point in which $|1 + \alpha(1/2 + it,n)\phi_{\mathfrak{a},\mathfrak{a}}(1/2 + it)y^{-2it}|$ attains its supremum. Consider the ratio
\[
\dfrac{|1 + \alpha(1/2 + it,n)\phi_{\mathfrak{a},\mathfrak{a}}(1/2 + it)y^{-2it}|}{|1 + \alpha(1/2 + it,n)\phi_{\mathfrak{a},\mathfrak{a}}(1/2 + it)y_0^{-2it}|} =
|1 + \dfrac{\alpha(1/2 + it,n)\phi_{\mathfrak{a},\mathfrak{a}}(1/2 + it)y^{-2it} - \alpha(1/2 + it,n)\phi_{\mathfrak{a},\mathfrak{a}}(1/2 + it)y_0^{-2it}}{1 + \alpha(1/2 + it,n)\phi_{\mathfrak{a},\mathfrak{a}}(1/2 + it)y_0^{-2it}}|
\]
\[
\ge 1 - \dfrac{|y^{-2it} - y_0^{-2it}|}{s(V)}.
\]
Clearly, if $|t\log\dfrac{y}{y_0}| \le 1$, then
\[
|y^{-2it} - y_0^{-2it}| = |(y/y_0)^{-2it} - 1| \ll |t|\cdot|\log y/y_0|,
\]
combining with our knowledge from the previous claim that $s(V) \gg |t|$, we find that 
\[
\dfrac{|y^{-2it} - y_0^{-2it}|}{s(V)} \ll |\log y/y_0|,
\]
so that there exists some constant $e > c > 1$ (very close to 1), such that for all $y\in (1,\infty)$ with $1/c \le y/y_0 \le c$, one has
\[
\dfrac{|1 + \alpha(1/2 + it,n)\phi_{\mathfrak{a},\mathfrak{a}}(1/2 + it)y^{-2it}|}{|1 + \alpha(1/2 + it,n)\phi_{\mathfrak{a},\mathfrak{a}}(1/2 + it)y_0^{-2it}|} \ge 1/2.
\]
It follows that since $V \ge e > c$, one has
\[
I(V) \gg s^2(V).
\]
\end{proof}
Applying the above Claim for $V = T$, and denoting $s = s(T)$, $I = I(T)$, we obtain the inequality $I \gg s^2$. Similarly, writing $s' = s(T')$ and $I' = I(T')$, we find that
\[
(s')^2 \ll I' \le (s')^2\log T' \ll (s')^2.
\]
We will only need the trivial implication $I' \ll (s')^2$.

Let $y\in [1,T']$ be arbitrary, and denote by $y_0\in [1,T]$ the $y$ for which $|1 + \alpha(1/2 + it,n)\phi_{\mathfrak{a},\mathfrak{a}}(1/2 + it)y^{-2it}|$ attains its supremum on the interval $[1,T]$. Then
\[
\dfrac{|1 + \alpha(1/2 + it,n)\phi_{\mathfrak{a},\mathfrak{a}}(1/2 + it)y^{-2it}|}{|1 + \alpha(1/2 + it,n)\phi_{\mathfrak{a},\mathfrak{a}}(1/2 + it)y_0^{-2it}|} \ll 1 + \dfrac{|t|\cdot|\log y/y_0|}{s} \ll 1 + |\log y/y_0| \ll 1 + \log T + \log T' \ll 1,
\]
where again we used the fact that $|s|\gg |t|$.

Since this holds for all $y\in [1,T']$, we conclude that
$s' \ll s$, so that $I' \ll s^2$. Applying the bound $I \gg s^2$, we conclude that
\[
\dfrac{\int_{1}^{T'}|1 + y^{-2it}\alpha(n,1/2 + it)\phi_{\mathfrak{a},\mathfrak{a}}(1/2 + it)|^2\dfrac{dy}{y}}{\int_1^{T}|1 + y^{-2it}\alpha(n,1/2 + it)\phi_{\mathfrak{a},\mathfrak{a}}(1/2 + it)|^2\dfrac{dy}{y}} = \dfrac{I'}{I} \ll 1.
\]
\end{proof}
Finally, the case where $t = 0$ is trivial, since then $|1 + y^{-2it}\alpha(n,1/2 + it)\phi_{\mathfrak{a},\mathfrak{a}}(1/2)|^2 = 4$ for all $y\in [1,T]$ and the analysis follows in a similar way as that in the case where $|\phi_{\mathfrak{a},\mathfrak{a}}(1/2 + it)| \le 1/2$.

Combining the different bounds for $|t| = 0$, $0 < |t| \le 1$ and $|t| > 1$, we obtain
\[
\dfrac{\int_{1}^{T'}|1 + y^{-2it}\alpha(n,1/2 + it)\phi_{\mathfrak{a},\mathfrak{a}}(1/2 + it)|^2\dfrac{dy}{y}}{\int_1^{T}|1 + y^{-2it}\alpha(n,1/2 + it)\phi_{\mathfrak{a},\mathfrak{a}}(1/2 + it)|^2\dfrac{dy}{y}} \ll 1.
\]
Claim~\ref{claim:constant_term_ratio} together with Claim~\ref{claim:reduction_to_constant_term_ratio} imply that for all $g'\in R(\delta)$, one has
\[
||\rho(g')||_P^2 \ll 1,
\]
which completes the proof of Claim~\ref{claim:right_shift_bound}.
\end{proof}

\section{Bounding the $R$-right shift norm}
Our goal in this section is to bound the $R$-right shift norm, $||\rho(g)||_R$ (see Corollary~\ref{corollary:right_shift_r_norm}). We approach this task by comparing $R$ with closely related norm, $N$, defined in terms of the Lie algebra, for which the right shift action can be bounded in terms of the norm of the adjoint action of $g$ on the Lie algebra and $||\rho(g)||_P$.

Let $\mathfrak{g} = \mathfrak{sl}_2(\mathbb{R})$ denote the Lie algebra of $SL_2(\mathbb{R})$, and consider the orthonormal basis $U,W,H$ of the standard $K$-invariant Hermitian bilinear form $(X,Y)\mapsto tr(XY^*)$,
\[
\sqrt{2}U = u =
\begin{pmatrix} 
0 & 1 \\
1 & 0 
\end{pmatrix}
\quad
\sqrt{2}W = \kappa = 
\begin{pmatrix} 
0 & 1 \\
-1 & 0 
\end{pmatrix}
\quad
\sqrt{2}\hat{H} = \hat{h} = 
\begin{pmatrix} 
1 & 0 \\
0 & -1 
\end{pmatrix}.
\]
The elements of $\mathfrak{g}$ act on the subspace of smooth functions on $X_0(q)$ as follows. Let $M\in \mathfrak{g}$, and $\phi\in C^{\infty}(X)$ be a smooth function. We write
\[
M\phi(g) = \dfrac{d}{dt}\left(\phi(ge^{tM})\right)|_{t=0}.
\]
We define the norm $N$ via the basis $U,W,\hat{H}$. 
\begin{definition}[The norm $N$]
Given an Eisenstein series $f\in V$, we define its $N$ norm by
\[
||f||_N^2 = N(f) := P(Uf) + P(Wf) + P(\hat{H}f).
\]
\end{definition}

The following claim shows that the norm $N$ behaves nicely under right translations.
\begin{claim}
\label{claim:norm_n_right_shift_bound}
For all $g\in SL_2(\mathbb{R})$, one has
\[
||\rho(g)||_N^2 \ll ||\rho(g)||_P^2||Ad(g)||^2.
\]
\end{claim}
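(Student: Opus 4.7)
The plan is to exploit the basic Lie-theoretic identity relating left-invariant vector fields and right translations via the adjoint action. For any $M \in \mathfrak{g}$ and $\phi \in V$, using $he^{tM}g = hg\cdot e^{t\text{Ad}(g^{-1})M}$, I would compute
\[
(M\rho(g)\phi)(h) = \dfrac{d}{dt}\phi(he^{tM}g)\Big|_{t=0} = \dfrac{d}{dt}\phi\bigl(hg\cdot e^{t\text{Ad}(g^{-1})M}\bigr)\Big|_{t=0} = \rho(g)\bigl[(\text{Ad}(g^{-1})M)\phi\bigr](h),
\]
so as operators on $V$ one has the conjugation identity $M\rho(g) = \rho(g)\cdot\text{Ad}(g^{-1})(M)$. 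This reduces the task to controlling the Lie derivative in the rotated direction $\text{Ad}(g^{-1})M$ in terms of derivatives along $U, W, \hat{H}$, after pulling $\rho(g)$ outside.

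I would then decompose $\text{Ad}(g^{-1})M = c_1(g,M)U + c_2(g,M)W + c_3(g,M)\hat{H}$ in the orthonormal basis. Since $\{U,W,\hat{H}\}$ is orthonormal for the Hermitian form on $\mathfrak{g}$, the coefficients satisfy $\sum_j |c_j|^2 \le ||\text{Ad}(g^{-1})||^2\, ||M||^2$, where $||\text{Ad}(g^{-1})||$ is the operator norm with respect to that same form. Combining the definition of $||\rho(g)||_P$ (which gives $P(\rho(g)\psi) \le ||\rho(g)||_P^2 P(\psi)$ for every $\psi \in V$) with the elementary inequality $P(\psi_1+\psi_2+\psi_3) \le 3(P(\psi_1)+P(\psi_2)+P(\psi_3))$, I get
\[
P(M\rho(g)\phi) = P\bigl(\rho(g)(c_1 U\phi + c_2 W\phi + c_3 \hat{H}\phi)\bigr) \ll ||\rho(g)||_P^2\,||\text{Ad}(g^{-1})||^2\, ||M||^2\, ||\phi||_N^2.
\]
Summing over the three basis elements $M \in \{U, W, \hat{H}\}$ (each of unit norm) yields $||\rho(g)\phi||_N^2 \ll ||\rho(g)||_P^2\, ||\text{Ad}(g^{-1})||^2\, ||\phi||_N^2$.

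To finish, I would replace $||\text{Ad}(g^{-1})||$ by $||\text{Ad}(g)||$. Using the Cartan decomposition $g = k_1 a k_2$ with $k_i \in SO_2(\mathbb{R})$ and $a = \text{diag}(y^{1/2}, y^{-1/2})$, the fact that $\text{Ad}(k_i)$ acts orthogonally on $\mathfrak{g}$ gives $||\text{Ad}(g)|| = ||\text{Ad}(a)||$, while a direct computation shows that both $\text{Ad}(a)$ and $\text{Ad}(a^{-1})$ have eigenvalue set $\{y, 1, y^{-1}\}$; hence $||\text{Ad}(g^{-1})|| = ||\text{Ad}(g)||$, completing the proof. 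There is no serious analytic obstacle here---the entire content of the claim is the infinitesimal conjugation identity in the first display, and everything else is elementary linear algebra on the three-dimensional space $\mathfrak{g}$. The only step requiring any care is being precise about the direction of the adjoint (it is $\text{Ad}(g^{-1})$, not $\text{Ad}(g)$, that arises naturally), but as noted this is harmless on $SL_2(\mathbb{R})$.
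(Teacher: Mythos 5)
Your proof is correct and follows essentially the same route as the paper: the infinitesimal conjugation identity $M\rho(g)=\rho(g)\,\mathrm{Ad}(g^{-1})(M)$, decomposition in the orthonormal basis $\{U,W,\hat{H}\}$, and the operator norms $\|\rho(g)\|_P$ and $\|\mathrm{Ad}(g)\|$. Your added care about the direction of the adjoint, together with the Cartan-decomposition observation that $\|\mathrm{Ad}(g^{-1})\|=\|\mathrm{Ad}(g)\|$, is a precision the paper glosses over but does not change the argument.
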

\begin{proof}
Let $X\in\mathfrak{g}$ be an element of the Lie algebra, and $v\in V$ be an Eisenstein series, then
\[
P(X\rho(g)v) = ||X\rho(g)v||_P^2 = ||\rho(g)\rho(gXg^{-1})v||_P^2,
\]
therefore,
\[
\dfrac{||\rho(g)v||_N^2}{||v||_N^2} = 
\dfrac{||\rho(g)\rho(gUg^{-1})v||_P^2 + ||\rho(g)\rho(gWg^{-1})v||_P^2 + ||\rho(g)\rho(g\hat{H}g^{-1})v||_P^2}{||\rho(U)v||_P^2 + ||\rho(W)v||_P^2 + ||\rho(\hat{H})v||_P^2} \ll ||\rho(g)||_P^2||Ad(g)||^2.
\]
Here $||\rho(g)||_P^2$ is given by $\sup_{v\in V}\dfrac{||\rho(g)v||_P^2}{||v||_P}$, and $||Ad(g)||^2$ is the norm of the adjoint action of $g$ on the Lie algebra $\mathfrak{sl}_2(\mathbb{R})$.

Taking the supremum over $0\neq v\in V$ on the left hand side, the claim follows.
\end{proof}

The following Claim gives an upper bound on $||\rho(g)||_R^2$ in terms of $||\rho(g)||_P^2$ and $||\rho(g)||_N^2$.
\begin{claim}
\label{claim:q_norm_right_shift_bound}
Let $g\in SL_2(\mathbb{R})$. One has
\[
||\rho(g)||_R^2 \ll_{\epsilon} q^{\epsilon}(||\rho(g)||_P^2 + ||\rho(g)||_N^2),\quad \forall\epsilon > 0.
\]
\end{claim}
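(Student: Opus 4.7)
The plan is to derive the claim from a two-sided comparison of the norms $R$ and $N+P$ on $V$. Suppose we have, for every $v \in V$,
\[
R(v) \ll_\epsilon q^\epsilon\bigl(N(v) + P(v)\bigr) \qquad\text{and}\qquad N(v) + P(v) \ll_\epsilon q^\epsilon\,R(v).
\]
Then for any $0 \ne v \in V$,
\[
\frac{\|\rho(g)v\|_R^2}{\|v\|_R^2}\;\ll_\epsilon\; q^\epsilon\,\frac{N(\rho(g)v)+P(\rho(g)v)}{R(v)}\;\le\; q^\epsilon\,\frac{\|\rho(g)\|_N^2\,N(v)+\|\rho(g)\|_P^2\,P(v)}{R(v)}\;\ll_\epsilon\; q^\epsilon\bigl(\|\rho(g)\|_N^2+\|\rho(g)\|_P^2\bigr),
\]
and taking the supremum over $v$ gives the claim (after absorbing $q^{2\epsilon}$ into $q^\epsilon$). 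So the real work is the two-sided comparison.

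For the upper direction $R(v) \ll_\epsilon q^\epsilon(N(v)+P(v))$, I would first rewrite $\mathcal{D}$ in Lie-algebra terms. The vector field $\kappa$ acts on smooth functions on $SL_2(\mathbb{R})$ as $-\partial_\theta$, so $\partial_\theta^2 = 2W^2$ and hence $\mathcal{D} = \mathfrak{C} - 2W^2$; moreover $\mathfrak{C}$ is a fixed quadratic polynomial in $U, W, \hat H$ via the standard Casimir expansion for $\mathfrak{sl}_2(\mathbb{R})$. Substituting this expansion expresses $R(v)$ as a linear combination of integrals of the form $\int_{X_0(q)} \Lambda^T(X^2 v)\,\overline{\Lambda^T v}\,d\chi$ for $X \in \{U, W, \hat H\}$. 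Since each such $X$ is a skew-adjoint left-invariant vector field on $SL_2(\mathbb{R})$, formal integration by parts gives $\int X^2 v\cdot\bar v\,d\chi = -\int|Xv|^2\,d\chi$ in the absence of truncation, and thus, after collecting signs, $R(v)$ equals the appropriate combination of $P(Xv)$ (i.e., essentially $\pm N(v)$) up to a boundary correction concentrated on the horocycles $\{h(x) = T\}$ at each cusp.

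The boundary correction depends only on the constant term of $v$ at each cusp, because that is precisely what $\Lambda^T$ subtracts in the cuspidal zones. Using the explicit Fourier expansion of the weight $n$ Eisenstein series, the scattering-matrix formula of Proposition~\ref{proposition:scattering_matrix}, and the pointwise bound $|\phi_{\mathfrak{a},\mathfrak{b}}(1/2+it)|\le 1$ together with the unitarity of $\Phi(1/2+it)$, this boundary contribution is bounded by $O_\epsilon(q^\epsilon P(v))$ in exactly the same spirit as the Maass--Selberg computation of Section~\ref{section:norm_bound}. The reverse comparison $N(v)+P(v) \ll_\epsilon q^\epsilon R(v)$ is easier: the pure-weight basis $(e_n)$ diagonalizes both $P$ and $R$, with $R(e_n) = (2\lambda+n^2)P(e_n) \ge P(e_n)$, and the off-diagonal contribution of $U$ and $\hat H$ to $N(v)$ (via the raising/lowering action coupling $e_n$ to $e_{n\pm 2}$) is controlled by Cauchy--Schwarz together with the polynomial-in-$(n,\lambda)$ growth of the raising/lowering coefficients and the mild bound on $P(e_n)$ from Claim~\ref{claim:truncated_eisenstein_norm_bound}.

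The principal obstacle is the precise, cusp-by-cusp evaluation of the boundary terms produced by integration by parts. Although the formal step is standard, $\Lambda^T$ breaks the $X$-equivariance and introduces a correction on $\{h(x)=T\}$ that must be controlled uniformly in $n$ and $t$ with only a $q^\epsilon$ dependence on the level. This is the step that, as in Section~\ref{section:norm_bound}, rests on the explicit arithmetic of the scattering matrix at squarefree level, and is the reason the argument does not generalize verbatim to non-arithmetic quotients.
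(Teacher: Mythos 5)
Your opening reduction is exactly the paper's proof of this claim: the paper also writes $R(\rho(g)f)\ll_\epsilon q^\epsilon(N(\rho(g)f)+P(\rho(g)f))\le q^\epsilon(\|\rho(g)\|_N^2N(f)+\|\rho(g)\|_P^2P(f))$ and then uses $P(f)\le R(f)$ and $N(f)\ll_\epsilon q^\epsilon R(f)$ before taking the supremum. The problem is in the part you correctly identify as ``the real work'': both of your sketched comparisons have genuine gaps. For the forward direction, your assertion that the boundary correction produced by integration by parts is $O_\epsilon(q^\epsilon P(v))$ is false. The boundary integrand is $(\Tilde{\nabla}$ of the constant term$)\cdot\vec n$ times $\overline{c_p(v)}$, and differentiating $e^{in\theta}y^{1/2\pm it}$ brings down a factor of size $\sqrt{\lambda}+|n|$; the paper's computation gives $|(B)+(C)|\ll_\epsilon q^\epsilon\sum_n|\beta_n|^2(\sqrt{\lambda}+|n|)$, which cannot be dominated by $q^\epsilon P(v)$ because $P(e_n)$ is only logarithmic in $n$ and $t$ (Claim~\ref{claim:truncated_eisenstein_norm_bound}). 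This is precisely why the paper proves the weaker difference bound $|R-N|\ll_\epsilon q^\epsilon P^{3/8}R^{5/8}$ (Claim~\ref{claim:norms_Q_and_N_diff_pure}), using the \emph{lower} bound $P(e_n)\gg$ (Claim~\ref{claim:p_norm_lower_bound}), raised only to a tiny power and combined with H\"older, and then runs a self-improving absorption step (Claim~\ref{claim:norms_Q_and_N_diff}) to reach $R\ll_\epsilon q^\epsilon(N+P)$. With your stronger claimed boundary bound none of this machinery would be needed, which is a sign it is too optimistic.

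The reverse direction has a uniformity gap. Writing $Ue_n,\hat H e_n$ via raising/lowering gives coefficients of size $\sqrt{\lambda}+|n|$, so after Cauchy--Schwarz you need something like $P(e_{n\pm2})\ll_\epsilon q^\epsilon P(e_n)$ uniformly in $n$ to conclude $N(v)\ll_\epsilon q^\epsilon R(v)$. The tools you cite do not give this: the upper bound of Claim~\ref{claim:truncated_eisenstein_norm_bound} controls the numerator only, and pairing it with the available lower bound (Claim~\ref{claim:p_norm_lower_bound}, whose worst case is $P(e_n)\gg t^2$ with $t$ as small as $(\log^2(1+q)+\log(1+|n/2|))^{-1}$) loses powers of $\log(2+|n|)$ that are unbounded over the weights appearing in $v$, so the resulting constant cannot be made independent of $v$ and the operator-norm statement does not follow. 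The paper avoids this entirely by deducing $N(f)\ll_\epsilon q^\epsilon R(f)$ from the same difference estimate (Claim~\ref{claim:norms_Q_and_N_diff_pure} with $g=I$): if $N(f)>R(f)$ then $N(f)\le R(f)+q^\epsilon P(f)^{3/8}R(f)^{5/8}\ll q^\epsilon R(f)$ since $P(f)\le R(f)$, with no per-weight comparison needed. So your architecture is the paper's, but to close the argument you need the $P(e_n)$ lower bound and the H\"older exponent-splitting (or an independent proof that $P(e_{n\pm2})\asymp P(e_n)$ up to $q^\epsilon$), neither of which is supplied by your sketch.
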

\begin{proof}
Let $f\in V$ be an Eisenstein series, and let $\epsilon > 0$. By Claim~\ref{claim:norms_Q_and_N_diff}, one has
\[
R(\rho(g)f) \ll_{\epsilon} q^{\epsilon}(N(\rho(g)f) + P(\rho(g)f)).
\]
Therefore,
\[
R(\rho(g)f) \ll_{\epsilon} q^{\epsilon}(||\rho(g)||_N^2N(f) + ||\rho(g)||_P^2P(f)).
\]
Decomposing $f = \sum_n\alpha_ne_n$ and using the fact that $e_n$ is an orthogonal basis relative to the norm $P$, we have
\[
P(f) = \sum_n|\alpha_n|^2P(e_n) \le \sum_n|\alpha_n|^2(2\lambda + n^2)P(e_n) = R(f).
\]
Next, either $N(f) \le R(f)$ as well, or $N(f) > R(f)$, and therefore, by Claim~\ref{claim:norms_Q_and_N_diff_pure}, applied with $g = I$,
\[
N(f) \ll_{\epsilon} R(f) + q^{\epsilon}P(f)^{3/8}R(f)^{5/8}.
\]
Hence, $N(f) \ll_{\epsilon} q^{\epsilon}R(f)$, and thus
\[
R(\rho(g)f) \ll_{\epsilon} q^{\epsilon}(||\rho(g)||_N^2q^{\epsilon} + ||\rho(g)||_P^2)R(f),
\]
which completes the proof.
\end{proof}

\begin{corollary}[$R$-norm right shift bound]
\label{corollary:right_shift_r_norm}
For all $g\in SL_2(\mathbb{R})$, one has
\[
||\rho(g)||_R^2 \ll_{\epsilon} q^{\epsilon}(||Ad(g)||^2 + 1)||\rho(g)||_P^2, \quad \forall \epsilon > 0.
\]
\end{corollary}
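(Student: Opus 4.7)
The plan is to obtain this corollary by directly combining the two preceding claims in this section: Claim~\ref{claim:q_norm_right_shift_bound}, which bounds $||\rho(g)||_R^2$ in terms of $||\rho(g)||_P^2$ and $||\rho(g)||_N^2$, and Claim~\ref{claim:norm_n_right_shift_bound}, which bounds $||\rho(g)||_N^2$ in terms of $||\rho(g)||_P^2$ and $||Ad(g)||^2$. Since the two inputs are already in place, the proof reduces to a short substitution.

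Concretely, I would start by invoking Claim~\ref{claim:q_norm_right_shift_bound} to write, for any $\epsilon > 0$,
\[
||\rho(g)||_R^2 \ll_{\epsilon} q^{\epsilon}\bigl(||\rho(g)||_P^2 + ||\rho(g)||_N^2\bigr).
\]
Then I would use Claim~\ref{claim:norm_n_right_shift_bound} to replace $||\rho(g)||_N^2$ with $||Ad(g)||^2 ||\rho(g)||_P^2$, obtaining
\[
||\rho(g)||_R^2 \ll_{\epsilon} q^{\epsilon}\bigl(||\rho(g)||_P^2 + ||Ad(g)||^2 ||\rho(g)||_P^2\bigr) = q^{\epsilon}(1 + ||Ad(g)||^2)||\rho(g)||_P^2,
\]
which is exactly the claimed bound.

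There is no real obstacle here: all the analytic and arithmetic content has already been absorbed into the two claims being combined. The only minor point to check is that the implied constants and the factor $q^{\epsilon}$ from Claim~\ref{claim:q_norm_right_shift_bound} are compatible with the absolute constant appearing in Claim~\ref{claim:norm_n_right_shift_bound}, which is immediate because the latter has no $q$-dependence. Hence the corollary follows in a single line of algebra from the two preceding claims.
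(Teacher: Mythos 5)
Your proposal is correct and coincides with the paper's own argument: the paper likewise derives Corollary~\ref{corollary:right_shift_r_norm} as an immediate consequence of Claim~\ref{claim:q_norm_right_shift_bound} and Claim~\ref{claim:norm_n_right_shift_bound}, exactly by the substitution you describe.
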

\begin{proof}
This is an immediate consequence of Claim~\ref{claim:q_norm_right_shift_bound} and Claim~\ref{claim:norm_n_right_shift_bound}.
\end{proof}

\begin{claim}
\label{claim:norms_Q_and_N_diff}
Let $f\in V$ be an Eisenstein series and let $g\in SL_2(\mathbb{R})$. One has
\[
R(\rho(g)f) \ll_{\epsilon}
q^{\epsilon}\left(N(\rho(g)f) + P(\rho(g)f)\right),\quad \forall \epsilon > 0.
\]
\end{claim}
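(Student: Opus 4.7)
The claim reduces to the $g$-free inequality $R(h) \ll_{\epsilon} q^{\epsilon}(N(h) + P(h))$ for $h = \rho(g)f \in V$, since $V$ is $\rho(g)$-invariant. The key algebraic input identifies $\mathcal{D}$ with (minus) the group Laplacian on $SL_2(\mathbb{R})$: on the principal series $V^t$ the Casimir element $\Omega := \hat{H}^2 + U^2 - W^2$ has eigenvalue $-2\lambda$ (equivalently $2\mathfrak{C} = W^2 - U^2 - \hat{H}^2$), and $\partial_\theta^2 = 2W^2$ on $K$-Fourier-expanded functions; combining gives
\[
\mathcal{D} = 2\mathfrak{C} - \partial_\theta^2 = -(U^2 + \hat{H}^2 + W^2).
\]

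Decompose $h = \sum_{n\in 2\mathbb{Z}} \beta_n e_n$. Pure-weight components are $P$-orthogonal (since $\Lambda^T$ respects $K$-equivariance), so $R(h) = \sum_n |\beta_n|^2 (2\lambda + n^2) P(e_n)$, $P(h) = \sum_n |\beta_n|^2 P(e_n)$, and $P(Wh) = \tfrac{1}{2}\sum_n n^2 |\beta_n|^2 P(e_n)$. Introduce the weight-shift operators $E^\pm := U \pm i\hat{H}$, satisfying $E^\pm e_n = \mu_n^\pm e_{n\pm 2}$ for scalars $\mu_n^\pm$. The pointwise identity $|Uh|^2 + |\hat{H}h|^2 = \tfrac{1}{2}(|E^+h|^2 + |E^-h|^2)$ together with $P$-orthogonality gives
\[
P(Uh) + P(\hat{H}h) = \tfrac{1}{2}\sum_n |\beta_n|^2\bigl(|\mu_n^+|^2 P(e_{n+2}) + |\mu_n^-|^2 P(e_{n-2})\bigr).
\]

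The Casimir further decomposes as $\mathfrak{C} = -\tfrac{1}{2}(E^+E^- + E^-E^+) + W^2$; applied to $e_n$ this yields the scalar identity $\mu_n^-\mu_{n-2}^+ + \mu_n^+\mu_{n+2}^- = -(4\lambda + n^2)$. In the unitary principal-series normalisation one computes $\mu_n^\pm = \sqrt{2}(s \pm n/2)$ with $s = 1/2+it$, so that $|\mu_n^+|^2 + |\mu_n^-|^2 = 4\lambda + n^2$ together with the symmetry $|\mu_n^\pm| = |\mu_{n\pm 2}^\mp|$. Passing to the constant-term normalisation of the paper introduces only unit-modulus rescaling factors involving $\alpha(n,1/2+it)$; combined with the upper bound $P(e_n) \ll_\epsilon q^\epsilon$ of Claim~\ref{claim:truncated_eisenstein_norm_bound} and the matching lower bound $P(e_n) \gg 1$ coming from the $2\log T$ term in Theorem~\ref{theorem:maass_selberg}, this gives the $q^\epsilon$-uniform comparability
\[
|\mu_n^+|^2 P(e_{n+2}) + |\mu_n^-|^2 P(e_{n-2}) \;\gg_\epsilon\; q^{-\epsilon}(4\lambda + n^2)P(e_n).
\]
Summing in $n$ weighted by $|\beta_n|^2$ yields $N(h) \gg_\epsilon q^{-\epsilon} R(h)$, which is the claim.

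The principal obstacle is the explicit transfer of the unitary-normalisation identities to the constant-term normalisation used in the paper: the rescaling factors relating $e_n$ to the unitary basis depend on $n$ through $\alpha(n,1/2+it)$ and through the scattering entries $\phi_{\mathfrak{a},\mathfrak{b}}$, and one must verify that $P(e_n)/P(e_{n\pm 2}) = 1 + O(q^\epsilon)$ uniformly in $n$ and $t$. This reduces to reading off the $n$-dependence of $P(e_n) = \|\Lambda^T E_{\mathfrak{a},n}(\cdot,1/2+it)\|^2$ directly from the Maass-Selberg expansion of Theorem~\ref{theorem:maass_selberg}, where all $n$-dependence enters through $\alpha(n,1/2+it)$ (of unit modulus) together with the bounded derivatives already estimated in Claim~\ref{claim:truncated_eisenstein_norm_bound}.
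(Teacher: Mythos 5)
Your algebraic skeleton is fine as far as it goes: within the fixed irreducible $V^t$ the ladder operators $E^{\pm}=U\pm i\hat H$ do satisfy $E^{\pm}e_n=\mu_n^{\pm}e_{n\pm2}$, the pure-weight components are $P$-orthogonal, and $|\mu_n^+|^2+|\mu_n^-|^2\asymp 4\lambda+n^2$. But the whole proof then hinges on the comparability $P(e_{n\pm2})\gg_{\epsilon}q^{-\epsilon}P(e_n)$, uniformly in $n$ and $t$ (it is unavoidable at least in the range $n^2\ll\lambda$, where the $W$-term cannot supply the $\lambda$-part of $2\lambda+n^2$), and this is precisely where the argument has a genuine gap. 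Neither of your cited inputs delivers it. The upper bound of Claim~\ref{claim:truncated_eisenstein_norm_bound} is $q^{\epsilon}(1+\log(1+|n/2|)+\log(1+|t|))$, and the asserted ``matching lower bound $P(e_n)\gg1$ coming from the $2\log T$ term'' of Theorem~\ref{theorem:maass_selberg} is not justified: the other two terms of the Maass--Selberg expansion are only bounded by $O(\log^2(1+q)+\log(1+|n/2|))$ (Claims~\ref{claim:third_term} and~\ref{claim:second_term}), can be negative, and can therefore swamp the constant $2\log T=4\pi$. Indeed the paper never proves $P(e_n)\gg1$ in general; its actual lower bound, Claim~\ref{claim:p_norm_lower_bound}, is weaker (only $\gg t^2$ in the intermediate range of $|t|$) and is proved not from Maass--Selberg but by integrating the constant term over $1\le y\le T$. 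Even if you substitute the paper's true two-sided bounds, the ratio $P(e_{n\pm2})/P(e_n)$ is controlled only up to factors like $\log(2+|t|)$ and powers of $\log(2+|n|)$, $\log(1+q)$, so your final inequality would carry losses that are not $O(q^{\epsilon})$ uniformly over $V$; and the step you flag as ``the principal obstacle,'' namely $P(e_n)/P(e_{n\pm2})=1+O(q^{\epsilon})$, does not reduce to reading off the $n$-dependence of Maass--Selberg, because that dependence enters through $\frac{d}{dt}\alpha(n,1/2+it)$, which is genuinely of size $\log(1+|n|)$ and of uncontrolled sign.

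This is also where your route diverges from the paper's, and why the paper can avoid the issue entirely: it never compares truncated norms across weights. Instead, Claim~\ref{claim:norms_Q_and_N_diff_pure} shows via Green's identity that $R(\rho(g)f)-N(\rho(g)f)$ is a boundary term at the truncation height, supported on the constant terms; this is bounded by $q^{\epsilon}\sum_n|\beta_n|^2(\sqrt{\lambda}+|n|)$ and then converted into $q^{\epsilon}P(\rho(g)f)^{3/8}R(\rho(g)f)^{5/8}$ using only the weak lower bound $q^{\epsilon}(\sqrt{\lambda}+|n|)^{\delta}P(e_n)\gg_{\epsilon,\delta}1$ together with H\"older, after which the stated claim follows by absorbing $R^{5/8}$. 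That is exactly why the modest Claim~\ref{claim:p_norm_lower_bound} suffices there. To salvage your approach you would need to prove genuine weight-to-weight comparability of $\|\Lambda^T E_{\mathfrak{a},n}\|^2$ within $q^{\pm\epsilon}$, which is a new and delicate statement about the Maass--Selberg formula; otherwise the boundary-term route is the one that closes.
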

\begin{proof}
Either $R(\rho(g)f) \le N(\rho(g)f)$ and we are done, or $R(\rho(g)f) > N(\rho(g)f)$. In this case, by Claim~\ref{claim:norms_Q_and_N_diff_pure}, we have
\[
R(\rho(g)f) \ll_{\epsilon} N(\rho(g)f) + q^{\epsilon}P(\rho(g)f)^{3/8}R(\rho(g)f)^{5/8}.
\]
Hence,
\[
R(\rho(g)f)^{3/8} \ll_{\epsilon} \dfrac{N(\rho(g)f)}{R(\rho(g)f)^{5/8}} + q^{\epsilon}P(\rho(g)f)^{3/8} \ll_{\epsilon} q^{\epsilon}\left(N(\rho(g)f)^{3/8} + P(\rho(g)f)^{3/8}\right),
\]
and therefore
\[
R(\rho(g)f) \ll_{\epsilon}
q^{\epsilon}\left(N(\rho(g)f) + P(\rho(g)f)\right).
\]
\end{proof}

\begin{claim}
\label{claim:norms_Q_and_N_diff_pure}
Let $f\in V$ be an Eisenstein series and let $g\in SL_2(\mathbb{R})$. One has
\[
\left|R(\rho(g)f) - N(\rho(g)f)\right| \ll_{\epsilon} q^{\epsilon}P(\rho(g)f)^{3/8}R(\rho(g)f)^{5/8}, \quad \forall 0 < \epsilon.
\]
\end{claim}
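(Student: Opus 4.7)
Write $v = \rho(g)f \in V$. The plan is to express $R(v) - N(v)$ as an explicit boundary integral on the truncation surface $\{h = T\}$ via integration by parts, and then bound that boundary integral using the Fourier structure of the Eisenstein constant terms.

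The first key observation is the algebraic identity
\[
\mathcal{D} \;=\; -\bigl(U^2 + W^2 + \hat{H}^2\bigr)
\]
in the universal enveloping algebra of $\mathfrak{sl}_2(\mathbb{R})$. Indeed, $W = \partial_\theta/\sqrt{2}$ gives $d^2/d\theta^2 = 2W^2$, and a direct computation of the trace-form Casimir yields $\mathfrak{C} = -(U^2 - W^2 + \hat{H}^2)$; the two combine to the displayed formula. In the cuspidal setting this identity, together with integration by parts (each $X \in \{U, W, \hat{H}\}$ is divergence-free on $X_0(q)$), would immediately give $R = N$; in the Eisenstein setting $R-N$ is precisely the truncation defect, which I will compute. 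A short Fourier argument in $x$---using that $\Lambda^T v$ has zero $x$-mean in each cuspidal zone while the swap errors are $x$-independent multiples of $\mathbf{1}_{h>T}$---first shows $R(v) = \langle \mathcal{D}v, \Lambda^T v\rangle$ and $P(Xv) = \langle Xv, \Lambda^T Xv\rangle$, removing one $\Lambda^T$ on each side. Distributional integration by parts, using the commutation $X\Lambda^T v = \Lambda^T Xv - v_0\, X\mathbf{1}_{h>T}$ (which rests on the identity $(Xv)_0 = Xv_0$), then gives
\[
\langle X^2 v, \Lambda^T v\rangle = -\langle Xv, \Lambda^T Xv\rangle + \int_{X_0(q)} Xv\cdot\overline{v_0}\cdot X\mathbf{1}_{h>T}\,d\chi ,
\]
and summing $-\sum_X$ yields $R(v) - N(v) = -\sum_{X,\mathfrak{b}} B_X^{(\mathfrak{b})}$, where $B_X^{(\mathfrak{b})}$ is a boundary integral at cusp $\mathfrak{b}$ involving the $\partial_y$-component $f_2^{(X)}$ of $X$ in Iwasawa coordinates.

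A direct Iwasawa computation gives
\[
f_2^{(U)} = -\sqrt{2}\,y\sin 2\theta,\qquad f_2^{(W)} = 0,\qquad f_2^{(\hat{H})} = \sqrt{2}\,y\cos 2\theta ,
\]
so $B_W^{(\mathfrak{b})} \equiv 0$; moreover in $B_U^{(\mathfrak{b})} + B_{\hat{H}}^{(\mathfrak{b})}$ the $\partial_\theta v_0$ contributions (with coefficients $\mp T\sin 2\theta\cos 2\theta$) cancel, while the $\partial_y v_0$ contributions add to the constant $2T^2$. This reduces the problem to bounding
\[
\sum_{\mathfrak{b}}\Bigl|\int_0^{2\pi}\partial_y v_0^{(\mathfrak{b})}(T,\theta)\,\overline{v_0^{(\mathfrak{b})}(T,\theta)}\,d\theta\Bigr| .
\]
Expand $v = \sum_n \beta_n e_n$ and use the Eisenstein Fourier expansion $v_0^{(\mathfrak{b})}(y,\theta) = \sum_n \beta_n e^{in\theta}(c_n^{+,\mathfrak{b}} y^{1/2+it} + c_n^{-,\mathfrak{b}} y^{1/2-it})$ with $|c_n^{\pm,\mathfrak{b}}| \le 1$. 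Parseval in $\theta$ and the estimate $|(1/2\pm it)c_n^{\pm,\mathfrak{b}}| \ll (1+|t|)$ bound the $n$-th contribution by $(1+|t|)(|c_n^{+,\mathfrak{b}}|^2 + |c_n^{-,\mathfrak{b}}|^2)$, crucially with no $|n|$-dependence. The unitarity $\sum_{\mathfrak{b}} |\phi_{\mathfrak{a},\mathfrak{b}}(1/2+it)|^2 = 1$ collapses the cusp sum, giving
\[
|R(v) - N(v)| \;\ll_{\epsilon}\; q^{\epsilon}(1+|t|)\,P(v) ,
\]
where the $q^\epsilon$ absorbs $\sum_n|\beta_n|^2 \ll q^\epsilon P(v)$ via a Maass--Selberg lower bound $P(e_n) \gg q^{-\epsilon}$ at the fixed truncation height $T = e^{2\pi}$. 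Since $R(v) \ge \lambda P(v) \gg (1+|t|)^2 P(v)$, one obtains $P(v)^{3/8} R(v)^{5/8} \gg (1+|t|)^{5/4} P(v) \gg (1+|t|) P(v)$, which closes the inequality.

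The main technical obstacle is justifying the distributional integration by parts when $v$ itself is not in $L^2$; a smooth cut-off of the cuspidal zones and a limiting argument should handle this. The key structural point, without which the argument would fail, is the $\partial_\theta v_0$ cancellation between $U$ and $\hat{H}$: a naive bound on $Xv_0$ would bring a factor of $|n|$, and the Sobolev-type interpolation at exponents $3/8, 5/8$ would then not close.
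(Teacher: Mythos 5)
Your overall route is the same as the paper's: integrate by parts against the divergence-free frame $\{U,W,\hat H\}$, identify $R-N$ with a boundary integral on the truncation surface involving only the constant term, and then estimate that boundary term using the Fourier expansion and the scattering matrix. Your structural observation is correct and is in fact sharper than what the paper does: since the inverse metric attached to the frame has no $y$--$\theta$ or $y$--$x$ cross terms, the normal derivative on $\{y=T\}$ is a pure $\partial_y$ (the $\partial_\theta v_0$ contributions of $U$ and $\hat H$ cancel), so the boundary term carries no factor of $|n|$. The paper instead bounds all three frame components separately, changes basis to $\{R,L,W\}$ and uses $|Rc_n|\ll(\sqrt{\lambda}+|n|)$, which is why it ends up with $\sum_n|\beta_n|^2(\sqrt{\lambda}+|n|)$ and then needs Claim~\ref{claim:p_norm_lower_bound} plus H\"older to reach the exponents $3/8$, $5/8$.

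The genuine gap is in your last step. The asserted lower bound $P(e_n)\gg q^{-\epsilon}$ uniformly in $n$ (hence $\sum_n|\beta_n|^2\ll q^{\epsilon}P(v)$, hence $|R(v)-N(v)|\ll q^{\epsilon}(1+|t|)P(v)$) is false. By Claim~\ref{claim:p_norm_lower_bound}, in the range $|t|\asymp\left(\log^2(1+q)+\log(1+|n/2|)\right)^{-1}$ one only has $P(e_n)\gg t^2$, and this is sharp: already for $q=1$ one has $|\phi_{\mathfrak{a},\mathfrak{a}}(1/2+it)|=|\psi(1/2+it)|=1$, so for small $t$ and $n$ with $\log|n|\asymp 1/|t|$ suitably aligned, the constant term nearly cancels on $[1,T]$ and $P(e_n)\asymp t^2\asymp\log^{-2}(2+|n|)\to 0$. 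Thus $\sum_n|\beta_n|^2$ is not controlled by $q^{\epsilon}P(v)$ when $\beta$ concentrates on such $n$, and your intermediate inequality does not follow (its two-line deduction of the claim via $R\ge 2\lambda P$ therefore also collapses). The fix is exactly the paper's: use the weighted lower bound $q^{\epsilon}(\sqrt{\lambda}+|n|)^{\delta}P(e_n)\gg_{\epsilon,\delta}1$, write your boundary bound as $q^{\epsilon}(1+|t|)\sum_n|\beta_n|^2\ll q^{\epsilon}\sum_n|\beta_n|^2(2\lambda+n^2)^{1/2+\delta/2}P(e_n)$ (absorbing $1+|t|\ll(2\lambda+n^2)^{1/2}$ into the weight), and apply H\"older to get $q^{\epsilon}P(v)^{(1-\delta)/2}R(v)^{(1+\delta)/2}$; taking $\delta=1/4$ gives the stated claim. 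With that repair your argument closes, and its $n$-aspect is cleaner than the paper's because of the normal-direction cancellation; the remaining issue you flag (justifying the integration by parts) is harmless, since on the cuspidal zones all integrands involve $v-v_0$ and its derivatives, which decay rapidly.
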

\begin{proof}
To see that, we write out the definition of $N(\rho(g)f)$.
\[
N(\rho(g)f) = P(U\rho(g)f) + P(W\rho(g)f) + P(\hat{H}\rho(g)f).
\]
The group $SL_2(\mathbb{R})$ is acting transitively on $X_0(q)$ (from the right). We choose the basis $U,W,\hat{H}$ on the Lie algebra of $SL_2(\mathbb{R})$. The action makes a canonical identification between the Lie algebra and the tangent space at each point on the manifold, and therefore it induces a frame on the manifold. There is a unique Riemanian structure on $X_0(q)$ such that this frame is orthonormal.

The induced gradient is given by $\Tilde{\nabla} f = (Uf,Wf,\hat{H}f)^t$, and the induced Laplacian, denoted by $\Tilde{\Delta}$, and defined to be $\text{Div}\circ \Tilde{\nabla}$, is given by $\Tilde{\Delta} f = (U^2 + W^2 + \hat{H}^2)f$.
Now, given a differential operator $X$, we note that
\[
P(X\rho(g)f) = \int_{X_0(q)}|\Lambda^TX\rho(g)f|^2(x)d\chi(x) = \int_{X_0(q)^T}|\Lambda^TX\rho(g)f|^2(x)d\chi(x) + \int_{X_0(q)^C}|\Lambda^TX\rho(g)f|^2(x)d\chi(x) = (I)_X + (II)_X,
\]
where $X_0(q)^T$ is the truncated modular 3-fold (points with height at most $T$) and $X_0(q)^C$ are the cuspidal zones (the complement). Then, since $\Lambda^T$ acts as the identity on $X_0(q)^T$, we have
\[
(I)_U + (I)_W + (I)_{\hat{H}} = \int_{X_0(q)^T}|U\rho(g)f|^2(x)d\chi(x) + \int_{X_0(q)^T}|W\rho(g)f|^2(x)d\chi(x) + \int_{X_0(q)^T}|\hat{H}\rho(g)f|^2(x)d\chi(x)
\]
\[
= \int_{X_0(q)^T}<(\Tilde{\nabla}\rho(g)f)(x),(\Tilde{\nabla}\rho(g)f)(x)>d\chi(x).
\]
Denote by $X$ a Lie derivative, then except for a measure zero set (the boundary of $X_0(q)^C$), one has $\Lambda^TX = X\Lambda^T$. Next, recall that on $X_0(q)^C$, $\rho(g)$ and the constant term operator commute, i.e. if for $v\in V$, we denote by $c_p(v)$ the constant term of $v$, then for all $x\in X_0(q)^C$, one has $(\Lambda^T\rho(g)e_n)(x) = \rho(g)e_n(x) - \rho(g)c_p(f)(x)$. Therefore,
\[
(II)_U + (II)_W + (II)_{\hat{H}} = \int_{X_0(q)^C}|U(\rho(g)f - \rho(g)c_p(f))(x)|^2d\chi(x) + \int_{X_0(q)^C}|W(\rho(g)f - \rho(g)c_p(f))(x)|^2d\chi(x)
\]
\[
+ \int_{X_0(q)^C}|\hat{H}(\rho(g)f - \rho(g)c_p(f))|^2(x)d\chi(x)
= \int_{X_0(q)^C}<(\Tilde{\nabla}(\rho(g)f - \rho(g)c_p(f)))(x),(\Tilde{\nabla}(\rho(g)f - \rho(g)c_p(f)))(x)>d\chi(x).
\]
Since the domain $X_0(q)^C$ is rectangular in the Iwasawa $x$-coordinate, we have orthogonality of characters, which implies that the above expression can be further simplified into
\[
(II)_U + (II)_W + (II)_{\hat{H}} = 
\int_{X_0(q)^C}<(\Tilde{\nabla}(\rho(g)f - \rho(g)c_p(f)))(x),(\Tilde{\nabla}\rho(g)f)(x)>d\chi(x).
\]
By Green's identity, applied for the domain $X_0(q)^T$, we have
\[
\int_{X_0(q)^T}<(\Tilde{\nabla}\rho(g)f)(x),(\Tilde{\nabla}\rho(g)f)(x)>d\chi(x)
+ \int_{X_0(q)^T}(\Tilde{\Delta}\rho(g)f)(x)\overline{\rho(g)f}d\chi(x) = 
\int_{\partial X_0(q)^T}((\Tilde{\nabla}\rho(g)f)(x)\cdot \vec{n}(x))\overline{\rho(g)f(x)}d\chi(x).
\]
Applying a similar identity for $X_0(q)^C$ and summing, we find that
\[
N(\rho(g)f) = -\int_{X_0(q)}(\Lambda^T\Tilde{\Delta}\rho(g)f)(x)\overline{\Lambda^T\rho(g)f(x)}d\chi(x) + 
\]
\[
\int_{\partial X_0(q)^T}((\Tilde{\nabla}\rho(g)f)(x)\cdot \vec{n}(x))\overline{(\rho(g)f)(x)}d\chi_{\partial X_0(q)^T}(x) + \int_{\partial X_0(q)^C}((\Tilde{\nabla}(\rho(g)f - \rho(g)c_p(f)))(x)\cdot \vec{n}(x))\overline{(\rho(g)f)(x)}d\chi_{\partial X_0(q)^C}(x)
\]
\[
= (A) + (B) + (C).
\]
\textbf{Remark}: here $d\chi_{\partial X_0(q)^T}(x)$ and $d\chi_{\partial X_0(q)^C}(x)$ are the corresponding volume forms on the boundaries of $X_0(q)^T$ and $X_0(q)^C$, respectively.

Since the Casimir satisfies the identity $\Delta = -\dfrac{1}{2}\left(UU^* + WW^* + \hat{H}\hat{H}^*\right)$, where the adjoint now is taken relative to the metric induced by the standard K-invariant bilinear form $(X,Y\mapsto tr(XY^*))$, and since this adjoint is simply the matrix transpose (when applied to the real elements of $\mathfrak{sl}_2(\mathbb{C})$), by the definitions of $U,W,\hat{H}$,
\[
\mathfrak{C} = -\dfrac{1}{2}\left(UU^* + WW^* + \hat{H}\hat{H}^*\right) = -\dfrac{1}{2}\left(U^2 - W^2 + \hat{H}^2\right).
\]
Hence
\[
\Tilde{\Delta} = -2\mathfrak{C} + 2W^2 = 2\Delta.
\]
Since $\sqrt{2}\rho(W) = \dfrac{d}{d\theta}$,
\[
(A) = R(\rho(g)f),
\]
we have
\[
|R(\rho(g)f) - N(\rho(g)f)| \le |(B) + (C)|.
\]
We proceed to analyze $(B)$ and $(C)$. We note that the boundaries $\partial(X_0(q)^T)$ and $\partial(X_0(q)^C)$ are equal set theoretically but have opposite normal orientations. Up to sign,
\[
(B) + (C) = \int_{\partial^T}((\Tilde{\nabla}\rho(g)c_p(f))(x)\cdot \vec{n}(x))\overline{(\rho(g)f)(x)}d\chi_{\partial^T}(x) = \int_{\partial^T}((\Tilde{\nabla}\rho(g)c_p(f))(x)\cdot \vec{n}(x))\overline{(\rho(g)c_p(f))(x)}d\chi_{\partial^T}(x),
\]
where in the last equality we used the orthogonality relations in the Iwasawa $x$-coordinate.

Splitting the set $\partial^T$ into a (disjoint) union over the cuspidal zones, i.e. $\partial^T = \cup_{\mathfrak{b}}\partial^T_{\mathfrak{b}}$, we have
\[
(B) + (C) = \sum_{\mathfrak{b}}\int_{\sigma_{\mathfrak{b}}^{-1}\partial^T_{\mathfrak{b}}}((\Tilde{\nabla}\rho(g)c_p(f))(\sigma_{\mathfrak{b}}x)\cdot \vec{n}(\sigma_{\mathfrak{b}}x))\overline{\rho(g)c_p(f)(\sigma_{\mathfrak{b}}x)}d\chi_{\partial^T}(\sigma_{\mathfrak{b}}x).
\]
where $\sigma_{\mathfrak{b}}$ is the scaling matrix sending the cusp at $\infty$ to the cusp $\mathfrak{b}$.

Since $\sigma_{\mathfrak{b}}^{-1}\partial^T_{\mathfrak{b}} = \partial^T_{\infty} = \{p\in X_0(q)|y(p) = T\}$, the set $\partial^T_{\infty}$ is naturally parametrized by the torus $\mathbb{T} = (x,\theta)\in ([0,1]/\sim)\times([0,2\pi]/\sim)$, where $[a,b]/\sim$ is the quotient of the segment $[a,b]$ by the identification $a = b$. We denote this parametrization by $g$, i.e. $g(x,\theta) = (x,T,\theta)$.

At each point $(x,\theta)\in \mathbb{T} = ([0,1]/\sim)\times([0,2\pi]/\sim)$, the tangent space, $T_{g(x,\theta)}(\partial^T_{\infty})$, is generated by the images of $\dfrac{d}{dx}$ and $\dfrac{d}{d\theta}$ under $g$, which are canonically identified with the elements of the Lie algebra corresponding to infinitesimal left translations by $N$, the unipotent group of upper triangular matrices, and infinitesimal right translation by $K = SO_2(\mathbb{R})$. Since our metric is flat, and the identification $T_p(\partial^T_{\infty})\hookrightarrow{}\mathfrak{sl}_2(\mathbb{R})$ is identical at each point $p\in \partial^T_{\infty}$ and independent of $T$, the normal, $\vec{n}(p)$, is constant, and depends only on our choice of metric. Moreover, since $T_p(\partial^T_{\infty})\hookrightarrow{}\mathfrak{sl}_2(\mathbb{R})$ is $N$-left translation and $K$-right translation invariant, the volume form, $d\chi_{\partial^T}(p)$, is too, and is therefore a constant multiple of $dxd\theta$, where the constant is independent of $x,\theta$, and hence may depend only on $y$, however, since $y = T \ll 1$ is fixed, we conclude that $d\chi_{\partial^T}(\sigma_{\mathfrak{b}}x) \ll dxd\theta$ for all $x\in \partial^T_{\infty}$.

Let $x\in \partial^T_{\infty}$. Write $\vec{n}(x) := \vec{n} = (\alpha_U,\alpha_W,\alpha_{\hat{H}})^t$, then
\[
|(B) + (C)| \ll \sum_{\mathfrak{b}}\sum_{X\in\{U,W,\hat{H}\}}\left|\int_0^1\int_0^{2\pi}((X\rho(g)c_p(f))(\sigma_{\mathfrak{b}}x))\overline{\rho(g)c_p(f)(\sigma_{\mathfrak{b}}x)}dxd\theta\right| = \sum_{\mathfrak{b}}\sum_{X\in\{U,W,\hat{H}\}}(III)_{X,\mathfrak{b}}.
\]
We note that for any pair of bases $\{A_1,A_2,A_3\}$ and $\{B_1,B_2,B_3\}$ of $\mathfrak{sl}_2(\mathbb{R})$, by the triangle inequality,
\[
\sum_{\mathfrak{b}}\sum_{X_1\in\{A_1,A_2,A_3\}}(III)_{X,\mathfrak{b}} \ll \sum_{\mathfrak{b}}\sum_{X_1\in\{B_1,B_2,B_3\}}(III)_{X,\mathfrak{b}}.
\]
Applying this observation with the basis $\{R,L,W\}$, where $L = \overline{R}$ and $R$ is defined by (see~\cite[p. 143]{BUMP}):
\[
R = e^{2i\theta}\left(iy\dfrac{d}{dx} + y\dfrac{d}{dy} + \dfrac{1}{2i}\dfrac{d}{d\theta}\right),
\]
we obtain the upper bound
\[
|(B) + (C)| \ll \sum_{\mathfrak{b}}\sum_{X_1\in\{R,L,W\}}(III)_{X,\mathfrak{b}}.
\]
Write $\rho(g)f = \sum_{n}\beta_ne_n$, then since the constant term operator and right translations commute, $\rho(g)c_p(f) = c_p(\rho(g)f) = c_p(\sum_n\beta_ne_n)$. By linearity, $\rho(g)c_p(f) = \sum_n\beta_nc_p(e_n)$. Denote by $c_n$ the constant term of $e_n$, i.e. $c_n = c_p(e_n)$, then, if $X\in \{R,L,W\}$,
\[
(III)_{X,\mathfrak{b}} = \left|\sum_n\sum_m\int_0^1\int_0^{2\pi}\beta_n\overline{\beta_m}(Xc_n)(\sigma_{\mathfrak{b}}x)\overline{c_n(\sigma_{\mathfrak{b}}x)}dxd\theta\right|.
\]
By definition (see Section~\ref{section:norm_bound}):
\[
c_n(\sigma_{\mathfrak{b}}x) = e^{in\theta}\left(\delta_{\mathfrak{a}=\mathfrak{b}}y^{1/2 + it} + \alpha(n,1/2 + it)\phi_{\mathfrak{a},\mathfrak{b}}(1/2 + it)y^{1/2 - it}\right).
\]
Since this is a constant function of $x$, we may further simplify
\[
(III)_{X,\mathfrak{b}} = \left|\sum_n\sum_m\int_0^{2\pi}\beta_n\overline{\beta_m}(Xc_n)(\sigma_{\mathfrak{b}}x)\overline{c_n(\sigma_{\mathfrak{b}}x)}d\theta\right|.
\]
Next, we note that the operators $R$ and $L$ increase a forms weight by $2$ or decrease it by $2$, respectively, and that $W$ leaves a forms' weight unchanged, by orthogonality relations in the $\theta$ coordinate, we have
\[
(III)_{R,\mathfrak{b}} = \left|\sum_n\int_0^{2\pi}\beta_{n+2}\overline{\beta_n}(Xc_{n+2})(\sigma_{\mathfrak{b}}x)\overline{c_n(\sigma_{\mathfrak{b}}x)}d\theta\right|.
\]
Recall that $|\alpha(n,1/2 + it)\phi_{\mathfrak{a},\mathfrak{b}}(1/2 + it)| \ll 1$, and $y = T \ll 1$, and therefore
\[
|c_n(\sigma_{\mathfrak{b}}x)| \ll 1\quad \forall x\in \partial^T_{\infty}.
\]
By the definition of $R$:
\[
|Re^{in\theta}y^{1/2 \pm it}| \ll (\sqrt{\lambda} + |n|)y^{1/2},
\]
and therefore,
\[
|Rc_n(\sigma_{\mathfrak{b}}x)| \ll (\sqrt{\lambda} + |n|).
\]
Plugging back to $(III)_{R,\mathfrak{b}}$, we obtain the bound
\[
(III)_{R,\mathfrak{b}} \ll \sum_n\left|\beta_{n+2}\beta_n(\sqrt{\lambda} + |n|)\right| \ll \sum_n|\beta_n|^2(\sqrt{\lambda} + |n|).
\]
A similar argument works for $(III)_{L,\mathfrak{b}}, (III)_{W,\mathfrak{b}}$. Summing over $X\in \{R,L,W\}$ and $\mathfrak{b}\in\mathfrak{A}$, we obtain the bound
\[
|(B) + (C)| \ll_{\epsilon} q^{\epsilon}\sum_n|\beta_n|^2(\sqrt{\lambda} + |n|),
\]
valid for all $\epsilon > 0$.

As a last step, we would like to bound $q^{\epsilon}\sum_n|\beta_n|^2(\sqrt{\lambda} + |n|)$ in terms of $R(\rho(g)f)$ and $P(\rho(g)f)$.

By Claim~\ref{claim:p_norm_lower_bound}, for all $n\in 2\mathbb{Z}$, $\epsilon,\delta > 0$:
\[
q^{\epsilon}(\sqrt{\lambda} + |n|)^{\delta}P(e_n) \gg_{\epsilon,\delta} 1.
\]
Hence,
\[
|(B) + (C)| \ll_{\epsilon,\delta} q^{\epsilon}\sum_n|\beta_n|^2(\sqrt{\lambda} + |n|)^{1+\delta}P(e_n).
\]
Writing 
\[
|\beta_n|^2(\sqrt{\lambda} + |n|)^{1+\delta}P(e_n) = 
|\beta_n|^{1-\delta}P(e_n)^{(1-\delta)/2}(\sqrt{\lambda} + |n|)^{1+\delta}|\beta_n|^{1 + \delta}P(e_n)^{(1+\delta)/2},
\]
and applying the H\"{o}lder inequality, valid under the assumption that $0 < \delta < 1/2$, we find that
\[
\sum_n|\beta_n|^2(\sqrt{\lambda} + |n|)^{1+\delta}P(e_n) \le \left(\sum_n|\beta_n|^2P(e_n)\right)^{(1-\delta)/2}\left(\sum_n|\beta_n|^2(\sqrt{\lambda} + |n|)^2
P(e_n)\right)^{(1+\delta)/2}
\]
\[
\ll
\left(\sum_n|\beta_n|^2P(e_n)\right)^{(1-\delta)/2}\left(\sum_n|\beta_n|^2(2\lambda + n^2)
P(e_n)\right)^{(1+\delta)/2} = 
P(\rho(g)f)^{(1-\delta)/2}R(\rho(g)f)^{(1+\delta)/2},
\]
which implies that for all $\epsilon > 0$ and $1/2 > \delta > 0$:
\[
\left|R(\rho(g)f) - N(\rho(g)f)\right| \ll_{\epsilon,\delta} q^{\epsilon}P(\rho(g)f)^{(1-\delta)/2}R(\rho(g)f)^{(1+\delta)/2}.
\]
Our claim follows from substituting $\delta = 1/4$.
\end{proof}

\begin{claim}
\label{claim:p_norm_lower_bound}
The following bound holds,
\[
P(e_n) \gg
\begin{cases}
1& |t| \ll \left(\log^2(1 + q) + \log(1 + |n/2|)\right)^{-1}\\
t^2& \left(\log^2(1 + q) + \log(1 + |n/2|)\right)^{-1} \ll |t| \le 1,\\
1& |t| > 1.
\end{cases}
\]
\end{claim}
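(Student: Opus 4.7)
The strategy is to lower bound $P(e_n) = \|\Lambda^T e_n\|^2$ via a Parseval-type inequality obtained by restricting the integration defining $P$ to the cuspidal zones at each cusp $\mathfrak{b}$ with $1 \le y \le T$, on which $\Lambda^T$ acts as the identity. Since these zones are rectangles in Iwasawa coordinates $(x,y,\theta)$, orthogonality of Fourier modes in $x$ isolates the constant term $c_{n,\mathfrak{b}}$ from the higher modes, giving a nonnegative lower bound in terms of just the constant-term mass. Using $|\alpha(n,1/2+it)| = 1$, the unitarity identity $\sum_\mathfrak{b} |\phi_{\mathfrak{a},\mathfrak{b}}(1/2+it)|^2 = 1$, and that cusp widths are at least $1$, I expect (in the spirit of the computation that appears in the proof of Claim~\ref{claim:reduction_to_constant_term_ratio}) the bound
\[
P(e_n) \gg (1 - |\phi_{\mathfrak{a},\mathfrak{a}}(1/2+it)|^2) \log T + \int_1^T \bigl|1 + \alpha(n,1/2+it)\, \phi_{\mathfrak{a},\mathfrak{a}}(1/2+it)\, y^{-2it}\bigr|^2 \frac{dy}{y}.
\]

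A case analysis on $|\phi_{\mathfrak{a},\mathfrak{a}}(1/2+it)|$ and $|t|$ then handles the three regimes. If $|\phi_{\mathfrak{a},\mathfrak{a}}| \le 1/2$, the first term already yields $P(e_n) \gg \log T \gg 1$. Otherwise $|\phi_{\mathfrak{a},\mathfrak{a}}| > 1/2$ and I focus on the integral. For $|t| > 1$, Claim~\ref{claim:large_t_bound} applies verbatim and gives $\int \gg 1$. For $0 < |t| \le 1$, combining Claim~\ref{claim:integrand_sup_bound} (which gives $\sup_{y \in [1,e]} |1 + \alpha\phi y^{-2it}| \gg |t|$ exactly under $|\phi| \ge 1/2$) with Claim~\ref{claim:s_and_I_relations} yields $\int \gg t^2$, producing the middle-regime bound $P(e_n) \gg t^2$.

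The main obstacle is the very small regime $|t| \ll (\log^2(1+q) + \log(1+|n/2|))^{-1}$, where the integral produced by Claim~\ref{claim:integrand_sup_bound} contributes only $O(t^2)$ and the first term is $O(t^2 \log T)$, yet I need $P(e_n) \gg 1$. The plan here is to exploit the central-point normalizations $\alpha(n,1/2) = 1$ and $\phi_{\mathfrak{a},\mathfrak{a}}(1/2) = 1$ (as stated in the remark following Proposition~\ref{proposition:scattering_matrix}) together with the derivative bounds $|\frac{d}{dt}\alpha(n,1/2+it)| \ll 1 + \log(1+|n/2|)$ from Claim~\ref{claim:alpha_bound} and $|\frac{d}{dt}\phi_{\mathfrak{a},\mathfrak{a}}(1/2+it)| \ll \log^2(1+q)$, the latter being extracted from the analysis of $(2.1)+(2.2)$ in the proof of Claim~\ref{claim:third_term}. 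A Taylor expansion around $t = 0$ then gives
\[
\alpha(n,1/2+it)\, \phi_{\mathfrak{a},\mathfrak{a}}(1/2+it) = 1 + O\bigl(|t|\bigl(\log^2(1+q) + \log(1+|n/2|)\bigr)\bigr),
\]
which stays within absolute distance $1/10$ (say) of $1$ once $|t|(\log^2(1+q) + \log(1+|n/2|))$ is below a small absolute constant. Combined with the bound $|y^{-2it} - 1| \le 2|t \log y|$ which is uniformly small on $y \in [1,2]$ when $|t| \le 1$, this keeps the integrand $|1 + \alpha\phi\, y^{-2it}|^2$ uniformly bounded below by a positive constant on $[1,2]$, yielding $\int_1^2 \gg 1$ and hence $P(e_n) \gg 1$.

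The principal technical difficulty is the small-$|t|$ analysis, where one must balance the first-order Taylor expansion of $\alpha\phi$ against the $y$-variation of $y^{-2it}$ and rely on the central-point normalization; the threshold in the claim is exactly the size of the worst-case derivative of $\alpha\phi$ at the central point, with the $\log^2(1+q)$ factor arising by summing $|\phi'|$ contributions over the $O(\log(1+q))$ prime divisors of $q$, each of size up to $\log(1+q)$.
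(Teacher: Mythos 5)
Your proposal is correct and follows essentially the same route as the paper: restrict to the rectangular cuspidal zones of height between $1$ and $T$, use orthogonality and unitarity of $\Phi(1/2+it)$ to reduce to the constant-term integral plus the $(1-|\phi_{\mathfrak{a},\mathfrak{a}}|)$ term, then split into the same three regimes, invoking Claims~\ref{claim:large_t_bound}, \ref{claim:integrand_sup_bound} and \ref{claim:s_and_I_relations} exactly as the paper does. Your small-$|t|$ argument via Taylor expansion of $\alpha\phi_{\mathfrak{a},\mathfrak{a}}$ around the central point with the derivative bounds from Claims~\ref{claim:alpha_bound} and~\ref{claim:third_term} is the same mean-value argument the paper carries out (``arguing as in the proof of Claim~\ref{claim:third_term}''), differing only in the inessential restriction of the integral to $y\in[1,2]$.
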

\begin{proof}
Clearly,
\[
P(e_n) = \int_{X_0(q)}|\Lambda^Te_n(x)|^2d\chi(x) \ge \int_{X_0(q)^T\setminus X_0(q)^1}|\Lambda^Te_n(x)|^2d\chi(x),
\]
where here the notation $X_0(q)^L$ denotes the set of all points $x\in X_0(q)$ with height at most $L$. The advantage of the domain $X_0(q)^T\setminus X_0(q)^1$ is that it is rectangular so that we may use orthogonality relations, and that $\Lambda^Te_n = e_n$ in this domain, hence
\[
P(e_n) \ge \int_{X_0(q)^T\setminus X_0(q)^1}|e_n(x)|^2d\chi(x).
\]
Next, denote by $\sigma_{\mathfrak{b}}$ the scaling matrix sending the cusp at $\infty$ to the cusp at $\mathfrak{b}$. We write $X_0(q)^T\setminus X_0(q)^1 = \cup_{\mathfrak{b}}R_{\mathfrak{b}}$, where the set $R_{\mathfrak{b}}$ denotes the set of points in the cuspidal zone of the cusp $\mathfrak{b}$ of height between $1$ and $T$. Then,
\[
\int_{X_0(q)^T\setminus X_0(q)^1}|e_n(x)|^2d\chi(x) = \sum_{\mathfrak{b}}\int_{\sigma_{\mathfrak{b}}^{-1}R_{\mathfrak{b}}}|e_n(\sigma_{\mathfrak{b}}x)|^2d\chi(\sigma_{\mathfrak{b}}x) \ge 
\sum_{\mathfrak{b}}\int_{\sigma_{\mathfrak{b}}^{-1}R_{\mathfrak{b}}}|c_n(\sigma_{\mathfrak{b}}x)|^2d\chi(\sigma_{\mathfrak{b}}x),
\]
where $c_n$ denotes the constant term of $e_n$, given by the formula
\[
c_n(\sigma_{\mathfrak{b}}x) = e^{in\theta}\left(\delta_{\mathfrak{a}=\mathfrak{b}}y^{1/2 + it} + \alpha(n,1/2 + it)\phi_{\mathfrak{a},\mathfrak{b}}(1/2 + it)y^{1/2 - it}\right).
\]
Since $\sigma_{\mathfrak{b}}^{-1}R_{\mathfrak{b}} = R_{\infty}$ for all $\mathfrak{b}$, we obtain the lower bound
\[
P(e_n) \ge 
\sum_{\mathfrak{b}}\int_0^1\int_1^T\int_0^{2\pi}|e^{in\theta}\left(\delta_{\mathfrak{a}=\mathfrak{b}}y^{1/2 + it} + \alpha(n,1/2 + it)\phi_{\mathfrak{a},\mathfrak{b}}(1/2 + it)y^{1/2 - it}\right)|^2\dfrac{dxdyd\theta}{y^2}
\]
\[
\gg \int_1^T|1 + y^{-2it}\alpha(n,1/2 + it)\phi_{\mathfrak{a},\mathfrak{a}}(1/2 + it)|^2\dfrac{dy}{y} + \left(1 - |\phi_{\mathfrak{a},\mathfrak{a}}(1/2 + it)|\right).
\]
Where we used the fact that $\Phi(1/2 + it)$ is unitary (see Proposition~\ref{proposition:scattering_matrix}) and $|\alpha(n,1/2 + it)| = 1$ (see Claim~\ref{claim:truncated_eisenstein_norm_bound}) and by definition $T \ge e^{2\pi}$.

As in the statement of Claim~\ref{claim:p_norm_lower_bound}, we split our analysis into cases with respect to how large $|t|$ is. 

We have three cases:
\begin{itemize}
    \item $|t| \ll \left(\log^2(1 + q) + \log(1 + |n/2|)\right)^{-1}$,
    \item $\left(\log^2(1 + q) + \log(1 + |n/2|)\right)^{-1} \ll |t| \le 1$,
    \item $|t| > 1$.
\end{itemize}

\begin{claim}
Assume $|t| \ll \left(\log^2(1 + q) + \log(1 + |n/2|)\right)^{-1}$, then
\[
P(e_n) \gg 1.
\]
\end{claim}
\begin{proof}
Arguing as in the proof of Claim~\ref{claim:third_term}, assuming $y\in [1,T]$, we have
\[
\left|\dfrac{\alpha(n,1/2 + it)\phi_{\mathfrak{a},\mathfrak{a}}(1/2 + it)y^{-2it} - 1}{t}\right| \ll \log^2(1 + q) + \log(1 + |n/2|).
\]
Therefore, since
\[
\left|\left|1 + \alpha(n,1/2 + it)\phi_{\mathfrak{a},\mathfrak{a}}(1/2 + it)y^{-2it}\right| - 2\right| \le \left|\alpha(n,1/2 + it)\phi_{\mathfrak{a},\mathfrak{a}}(1/2 + it)y^{-2it} - 1\right|
\]
\[
\ll |t|\left(\log^2(1 + q) + \log(1 + |n/2|)\right),
\]
we find that there exists some absolute constant $C > 1$, such that for $|t| < \dfrac{1}{C\left(\log^2(1 + q) + \log(1 + |n/2|)\right)}$, one has
\[
\left|1 + \alpha(n,1/2 + it)\phi_{\mathfrak{a},\mathfrak{a}}(1/2 + it)y^{-2it}\right| \ge 1.
\]
Thus,
\[
P(e_n) \gg \int_1^T|1 + y^{-2it}\alpha(n,1/2 + it)\phi_{\mathfrak{a},\mathfrak{a}}(1/2 + it)|^2\dfrac{dy}{y} \ge \log T \ge 1,
\]
which completes the proof.
\end{proof}
From this point onward, we split the analysis into two cases. If $|\phi_{\mathfrak{a},\mathfrak{a}}(1/2 + it)| < 1/2$, then clearly
\[
P(e_n)\gg \left(1 - |\phi_{\mathfrak{a},\mathfrak{a}}(1/2 + it)|\right) > 1/2
\]
and Claim~\ref{claim:p_norm_lower_bound} holds. 

Otherwise,
\begin{claim}
Assume $\left(\log^2(1 + q) + \log(1 + |n/2|)\right)^{-1} \ll |t| \le 1$ and that $|\phi_{\mathfrak{a},\mathfrak{a}}(1/2 + it)| \ge 1/2$, then
\[
P(e_n) \gg t^2.
\]
\end{claim}
\begin{proof}
By claim~\ref{claim:integrand_sup_bound}, since $T \ge e^{2\pi} \ge e$ by assumption,
\[
\sup_{y\in [1,T]}|1 + y^{-2it}\alpha(n,1/2 + it)\phi_{\mathfrak{a},\mathfrak{a}}(1/2 + it)| \gg |t|.
\]
Combining with Claim~\ref{claim:s_and_I_relations}, we find that
\[
P(e_n) \ge \int_1^T|1 + y^{-2it}\alpha(n,1/2 + it)\phi_{\mathfrak{a},\mathfrak{a}}(1/2 + it)|^2\dfrac{dy}{y} \gg 
t^2,
\]
which completes the proof.
\end{proof}

\begin{claim}
Assume $|t| > 1$ and that $|\phi_{\mathfrak{a},\mathfrak{a}}(1/2 + it)| \ge 1/2$, then
\[
P(e_n) \gg 1.
\]
\end{claim}
\begin{proof}
Our Claim follows from Claim~\ref{claim:large_t_bound}.
\end{proof}

Combining the previous bounds we find that
\[
P(e_n) \gg
\begin{cases}
1& |t| \ll \left(\log^2(1 + q) + \log(1 + |n/2|)\right)^{-1}\\
t^2& \left(\log^2(1 + q) + \log(1 + |n/2|)\right)^{-1} \ll |t| \le 1,\\
1& |t| > 1.
\end{cases}
\]
Thus completing the proof.
\end{proof}

\section{Bounding the $Q$-right shift norm}
Our goal in this section is to prove:
\begin{claim}
\label{claim:real_q_norm_right_shift_bound}
For all small translates $g\in R(\delta)$ (see Claim~\ref{claim:delta_existence}), one has
\[
\forall g\in R(\delta): ||\rho(g)||_Q \le ||\rho(g)||_P^{1/2 - \epsilon_0}||\rho(g)||_R^{1/2 + \epsilon_0}.
\]
\end{claim}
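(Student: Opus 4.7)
The key observation is that the pure-weight basis $(e_n)_{n\in 2\mathbb{Z}}$ simultaneously diagonalises all three inner products, and that the eigenvalues satisfy the interpolation identity
\[
Q(e_n) \;=\; P(e_n)^{1/2-\epsilon_0}\,R(e_n)^{1/2+\epsilon_0},
\]
which is immediate from the remarks after Definitions~\ref{definition:r_norm} and~\ref{definition:q_norm}, i.e.\ from $R(e_n)=(2\lambda+n^2)P(e_n)$ and $Q(e_n)=(2\lambda+n^2)^{1/2+\epsilon_0}P(e_n)$. Thus the norm $Q$ is precisely the interpolated Hilbert-space norm between $P$ and $R$ at parameter $\theta = s_0 = 1/2+\epsilon_0$, and the desired estimate is the standard complex-interpolation bound for an operator between Hilbert spaces whose norms differ by conjugation by powers of a single positive self-adjoint operator.

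The plan is to establish this by Hadamard's three-lines theorem. Let $H$ denote the completion of $V$ under $\langle\cdot,\cdot\rangle_P$. The operator $\mathcal{D}$ is positive self-adjoint on $H$ with purely discrete spectrum, namely the eigenvalues $2\lambda+n^2$ on the orthonormal basis $e_n/\sqrt{P(e_n)}$, so its complex powers $\mathcal{D}^z$ are defined via the functional calculus and $\mathcal{D}^{it}$ is unitary on $H$ for every real $t$. A direct change of variables in the definition of each operator norm gives
\[
\|\rho(g)\|_P = \|\rho(g)\|_H,\qquad
\|\rho(g)\|_R = \|\mathcal{D}^{1/2}\rho(g)\mathcal{D}^{-1/2}\|_H,\qquad
\|\rho(g)\|_Q = \|\mathcal{D}^{s_0/2}\rho(g)\mathcal{D}^{-s_0/2}\|_H,
\]
using the identities $R(v)=\|\mathcal{D}^{1/2}v\|_H^2$ and $Q(v)=\|\mathcal{D}^{s_0/2}v\|_H^2$.

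Next I would introduce the operator-valued holomorphic family
\[
F(z) \;=\; \mathcal{D}^{z/2}\,\rho(g)\,\mathcal{D}^{-z/2},\qquad z\in\Sigma:=\{w\in\mathbb{C}:0\le\mathrm{Re}(w)\le 1\}.
\]
Because $\mathcal{D}^{it/2}$ is unitary on $H$ and commutes with $\mathcal{D}^{1/2}$ (both are Borel functions of $\mathcal{D}$), the boundary-line factorisation $F(1+it)=\mathcal{D}^{it/2}\bigl(\mathcal{D}^{1/2}\rho(g)\mathcal{D}^{-1/2}\bigr)\mathcal{D}^{-it/2}$ together with $\|UAU^*\|_H=\|A\|_H$ for unitary $U$ yields, for every $t\in\mathbb{R}$,
\[
\|F(it)\|_H = \|\rho(g)\|_P,\qquad \|F(1+it)\|_H = \|\rho(g)\|_R.
\]
The boundedness of $\rho(g)$ at both endpoints—which is exactly what Claim~\ref{claim:right_shift_bound} and Corollary~\ref{corollary:right_shift_r_norm} provide for $g\in R(\delta)$—then makes the three-lines theorem applicable, giving for every $\theta\in[0,1]$ the interpolation inequality
\[
\|F(\theta)\|_H \;\le\; \|\rho(g)\|_P^{\,1-\theta}\,\|\rho(g)\|_R^{\,\theta}.
\]
Specialising to $\theta=s_0$ and identifying $\|F(s_0)\|_H=\|\rho(g)\|_Q$ closes the argument.

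The only mildly delicate step is verifying the admissible-growth hypothesis of the three-lines theorem for $F$ across the full strip $\Sigma$, since a priori $\mathcal{D}^{z/2}$ is an unbounded operator for $\mathrm{Re}(z)>0$. I expect this to be the main (though still routine) technical point: in our purely discrete-spectrum setting one can approximate by truncating to spectral projections of $\mathcal{D}$ onto finite-dimensional bands $\{2\lambda+n^2\le N\}$, apply the three-lines theorem to each finite-rank approximation $F_N(z)$ where boundedness is automatic, and pass to the limit $N\to\infty$ using boundedness of $\rho(g)$ at both endpoints. Everything else is a direct application of the standard Hilbert-space complex-interpolation formalism.
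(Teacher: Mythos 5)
Your proposal is correct, and it reaches the inequality by essentially reproving, in this special case, the abstract fact that the paper simply cites. The paper's own proof is a two-line appeal to quadratic interpolation theory: it invokes the results of the cited interpolation literature to assert that $Q$ is the norm of the interpolating Hilbert space between $P$ and $R$ and that this interpolation is exact of exponent $\theta_0 = 1/2+\epsilon_0$, and then applies the resulting operator inequality $\|A\|_Q \le \|A\|_P^{1-\theta_0}\|A\|_R^{\theta_0}$ to $A=\rho(g)$, using only the finiteness of $\|\rho(g)\|_P$ and $\|\rho(g)\|_R$ for small translates — exactly the two inputs you also use. What you do differently is make the hypothesis of that citation explicit and then prove the exactness yourself: you observe that $(e_n)$ simultaneously diagonalises $P$, $R$, $Q$ with the eigenvalue identity $Q(e_n)=P(e_n)^{1/2-\epsilon_0}R(e_n)^{1/2+\epsilon_0}$, rewrite the three operator norms as $\|\rho(g)\|_H$, $\|\mathcal{D}^{1/2}\rho(g)\mathcal{D}^{-1/2}\|_H$, $\|\mathcal{D}^{s_0/2}\rho(g)\mathcal{D}^{-s_0/2}\|_H$ on the $P$-completion, and run Hadamard's three-lines theorem on $F(z)=\mathcal{D}^{z/2}\rho(g)\mathcal{D}^{-z/2}$, with the unitarity of $\mathcal{D}^{it/2}$ giving the boundary values. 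This buys a self-contained argument that makes transparent \emph{why} $Q$ is the interpolated norm (the paper leaves this implicit in the reference), at the cost of the technical point you correctly flag — controlling $F$ inside the strip where $\mathcal{D}^{z/2}$ is unbounded; your truncation scheme works, and it is even slightly cleaner to apply the three-lines theorem to the scalar functions $z\mapsto\langle\rho(g)\mathcal{D}^{-z/2}u,\mathcal{D}^{\bar z/2}v\rangle_P$ with $u,v$ finite linear combinations of the $e_n$, which are bounded holomorphic on the strip since the eigenvalues $2\lambda+n^2$ are bounded below, and then take suprema over the dense span. Both routes rest on the same structural facts (simultaneous diagonalisation and finiteness of the two endpoint norms), so the difference is one of self-containment rather than of mathematical substance.
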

\textbf{Remark}: recall that $0 < \epsilon_0 < 1/2$ is as in Definition~\ref{definition:q_norm}.
\begin{proof}
Quadratic interpolation (see~\cite{QUAD1} and~\cite[Chapter 4, Theorem 1.13]{KREIN} for reference), is the study of the interpolation of a pair of Banach spaces in the special case where both Banach spaces are Hilbert spaces, and their interpolation is also a Hilbert space.

Although our ``interpolated norm" $Q$ was defined independently, and had nothing to do with quadratic interpolation a priori,~\cite[Theorem 3.3]{QUAD1} shows that the norm $Q$ is, in fact, the norm of an interpolating Hilbert space, and~\cite[Theorem 3.5]{QUAD1} shows that this interpolation is exact of exponent $\theta_0 = 1/2 + \epsilon_0$.

Having the $Q$-norm being an exact interpolation of exponent $\theta_0$ means that for all linear operators $A:V\longrightarrow V$, for which $||A||_P,||A||_R < \infty$, i.e. the $P$-norm and the $R$-norm of $A$ are bounded, the $Q$-norm of $A$ is also bounded, and moreover,
\[
||A||_Q \le ||A||_P^{1 - \theta_0}||A||_R^{\theta_0}.
\]
This is the interpolation inequality. Applying the interpolation inequality to the operator $\rho(g)$ for $g\in R(\delta)$, i.e. $g$ a small translate, we know that $||\rho(g)||_P,||\rho(g)||_R < \infty$ and the claim follows.
\end{proof}

\appendix
\section{The approximate functional equation technique}
\label{appendix:blomer}
We sketch an alternative approach to our theorem, which was explained to us by Valentin Blomer~\cite{BLOMER}.

\begin{theorem}
Let $g\in SL_2(\mathbb{R})$ have Iwasawa coordinates $x,y,\theta$, let $q$ be a positive squarefree integer, $n\in 2\mathbb{Z}$, and $\mathfrak{a}$ be a cusp of $X_0(q)$. For $t\in\mathbb{R}$, denote by $E_{\mathfrak{a},n}(*,1/2 + it)$ the Eisenstein series of the cusp $\mathfrak{a}$, weight $n$, with spectral parameter $1/2 + it$, normalized by the constant term. Then
\[
|E_{\mathfrak{a},n}(x,1/2 + it)| \ll_{\epsilon} q^{\epsilon}(y^{1/2} + y^{-1/2})(1 + |t| + |n|)^{1/2+{\epsilon}},\quad \forall 0 < \epsilon < 1/2.
\]
\end{theorem}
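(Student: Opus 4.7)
The plan is to Fourier--expand $E_{\mathfrak{a},n}(\cdot,1/2+it)$ at a convenient cusp, isolate the constant term (which gives the $\sqrt{y+y^{-1}}$ main contribution), and estimate the Whittaker--weighted Fourier tail through the approximate functional equation applied to the Dirichlet series of Fourier coefficients. First I would move $g$ into a fundamental domain for $X_0(q)$, using $\Gamma_0(q)$-invariance of $|E_{\mathfrak{a},n}(g,\cdot)|$, and choose $\mathfrak{b}$ to be the cusp whose cuspidal zone contains $g$ (taking $\mathfrak{b}$ arbitrary when $h(g)\le 1$). Writing $y := y(\sigma_{\mathfrak{b}}^{-1} g)$ and $x,\theta$ for the other Iwasawa coordinates, one has $y \ll y(g)+y(g)^{-1}$ and the Fourier expansion
\[
E_{\mathfrak{a},n}(\sigma_{\mathfrak{b}} g,1/2+it) = e^{in\theta} c_{\mathfrak{a},\mathfrak{b},0}(y) + e^{in\theta}\sum_{m\neq 0}\rho_{\mathfrak{a},\mathfrak{b}}(m,1/2+it)\,W_{\mathrm{sgn}(m)\,n/2,\,it}(4\pi|m|y)\,e(mx),
\]
where $c_{\mathfrak{a},\mathfrak{b},0}(y)=\delta_{\mathfrak{a}=\mathfrak{b}}y^{1/2+it}+\phi_{\mathfrak{a},\mathfrak{b}}(1/2+it)\alpha(n,1/2+it)y^{1/2-it}$. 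Since $|\phi_{\mathfrak{a},\mathfrak{b}}(1/2+it)|\le 1$ and $|\alpha(n,1/2+it)|=1$ (see Claim~\ref{claim:truncated_eisenstein_norm_bound} and Proposition~\ref{proposition:scattering_matrix}), the constant term contributes at most $2y^{1/2}\le 2\sqrt{y+y^{-1}}$.

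Next I would bound the Fourier tail. For squarefree $q$ the coefficients $\rho_{\mathfrak{a},\mathfrak{b}}(m,1/2+it)$ are explicit divisor sums; up to a normalizing factor they look like $\sigma_{-2it}(|m|)|m|^{it-1/2}$ times local factors at $p\mid q$ depending on the relative positions of the cusps $\mathfrak{a},\mathfrak{b}$, and they satisfy the divisor bound $|\rho_{\mathfrak{a},\mathfrak{b}}(m,1/2+it)|\ll q^{\epsilon}|m|^{-1/2+\epsilon}$. The associated Dirichlet series is a ratio of products of two Riemann zeta functions (at shifts $s\pm it$) times local polynomials in $p^{-s}$, hence admits the standard functional equation. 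Applying the approximate functional equation with a smooth test function adapted to the Whittaker weight $W_{n/2,\,it}(4\pi|m|y)$ truncates the series to $|m|\lesssim (1+|t|+|n|)/y$, the transition region of the Whittaker function, with negligible error. On this range one has a uniform bound $|W_{n/2,\,it}(v)|\ll 1$, so combined with the coefficient bound
\[
\Bigl|\sum_{m\neq 0}\rho_{\mathfrak{a},\mathfrak{b}}(m,1/2+it)W_{\mathrm{sgn}(m)\,n/2,\,it}(4\pi|m|y)e(mx)\Bigr| \ll q^{\epsilon}\!\!\sum_{1\le m\le (1+|t|+|n|)/y}\!\!\frac{m^{\epsilon}}{\sqrt{m}} \ll q^{\epsilon}y^{-1/2}(1+|t|+|n|)^{1/2+\epsilon}.
\]
Adding this to the constant--term contribution yields the stated bound.

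The main obstacle will be establishing the uniform Whittaker estimate $|W_{n/2,\,it}(v)|\ll 1$ on the effective support $v\lesssim 1+|t|+|n|$ with superpolynomial decay beyond, so that the truncation in the Fourier sum is at the sharp scale $(1+|t|+|n|)/y$. A clean way to do this is via the Mellin--Barnes integral representation for $W_{n/2,\,it}$ together with Stirling for the archimedean gamma factors $\Gamma(1/2+it\pm n/2)$; here the smooth cutoff that the approximate functional equation introduces is what absorbs the oscillation of those gamma factors and converts the naive $O((1+|t|+|n|)^{1+\epsilon})$ estimate into the desired $O((1+|t|+|n|)^{1/2+\epsilon})$ saving.
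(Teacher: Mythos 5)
There is a genuine gap, and it sits exactly where you flag ``the main obstacle.'' Your two key inputs --- the coefficient bound $|\rho_{\mathfrak{a},\mathfrak{b}}(m,1/2+it)|\ll q^{\epsilon}|m|^{-1/2+\epsilon}$ and the uniform estimate $|W_{n/2,\,it}(v)|\ll 1$ on the effective range $v\lesssim 1+|t|+|n|$ --- cannot both hold with the standard normalizations. For the classical Whittaker function the second claim is false in the weight aspect: $W_{\kappa,\mu}(v)\sim v^{\kappa}e^{-v/2}$, and for instance $W_{k/2,(k-1)/2}(v)=v^{k/2}e^{-v/2}$ is of size $\exp\bigl(\tfrac{k}{2}(\log k-1)\bigr)$ near the turning point $v\asymp k$. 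In the Fourier expansion of $E_{\mathfrak{a},n}(\cdot,1/2+it)$ normalized by its constant term, this super-exponential growth is offset by archimedean factors of the shape $\Gamma(1/2+it+\mathrm{sgn}(m)\,n/2)^{-1}$ (and $e^{\pm\pi t/2}$-type factors) sitting inside the coefficients, which you have absorbed into the phrase ``up to a normalizing factor'' while simultaneously asserting the bare divisor bound. The entire analytic content of your route is therefore the uniform bound, in both the $n$- and $t$-aspects, for the correctly normalized product of coefficient and Whittaker function --- including the turning-point (Airy) region and the terms with $\mathrm{sgn}(m)$ opposite to the sign of $n$, where the Whittaker function is tiny but the reciprocal gamma factor is huge --- and this is deferred to an unexecuted ``Mellin--Barnes plus Stirling'' sketch. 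Note also that in your setup the approximate functional equation buys nothing: once the truncated $m$-sum is estimated by absolute values, a smooth cutoff cannot ``absorb the oscillation of the gamma factors''; the truncation at $|m|\lesssim(1+|t|+|n|)/y$ already follows from Whittaker decay, and the target $q^{\epsilon}y^{-1/2}(1+|t|+|n|)^{1/2+\epsilon}$ follows from the absolute-value count alone \emph{provided} the normalized Whittaker bound holds, so your argument stands or falls with that unproved archimedean estimate. (Two smaller points: expand at the cusp realizing the invariant height $h(g)$, not an arbitrary one, so that $y(\sigma_{\mathfrak{b}}^{-1}\gamma g)\ge y(g)$ and hence the tail factor $y(\sigma_{\mathfrak{b}}^{-1}\gamma g)^{-1/2}\le y^{-1/2}$, with $h(g)\ll y+y^{-1}$ handling the constant term; and the level-$q$ coefficients at the various cusps carry explicit $q$-dependence that must be checked to be $\ll q^{\epsilon}$, not just quoted.)

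For comparison, the paper's proof of this theorem avoids the Fourier--Whittaker expansion altogether: it reduces to level one via the Conrey--Iwaniec decomposition of $E_{\mathfrak{a},0}$ into dilated level-one Eisenstein series, commutes the raising/lowering operators past the dilation, and applies the approximate functional equation to the lattice sum $\sum_{(c,d)\neq(0,0)}\bigl(\tfrac{c\bar z+d}{|cz+d|}\bigr)^{n}|cz+d|^{-2s}$, whose functional equation has the \emph{unimodular} gamma ratio $\Gamma(1/2+it+|n|/2)/\Gamma(1/2-it+|n|/2)$; the truncated sum is then handled by elementary dyadic lattice-point counting and the bound $|\zeta(1+2it)|^{-1}\ll(1+|t|)^{\epsilon}$. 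That is why no uniform Whittaker asymptotics are needed there, whereas they are the crux of your version.
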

\begin{proof}
The starting point is a reduction from the squarefree level $q$ case to the level 1 case, via the following formula~\cite{CONREYIWANIEC}[3.25]
\[
E_{\mathfrak{a},0}(z,1/2 + it) = 
\prod_{p|q}(1-\dfrac{1}{p^{1 + 2it}})^{-1}\dfrac{\mu(v)}{(qv)^{1/2 + it}}\sum_{\beta | v}\sum_{\gamma | w}\dfrac{\mu(\beta \gamma)\beta^{1/2  + it}}{\gamma^{1/2 + it}}E_0(\beta\gamma z, 1/2 + it).
\]
In this formula, $\mathfrak{a} = v$ is some divisor of $q$, and $w$ is the cusp's width. Together, the two integers satisfy the equality $vw = q$.

The formula gives us a way to write a spherical Eisenstein series of level $q$ (the left hand side) as a linear combination of Eisenstein series of level 1, possibly with their argument multiplied by some integer, which is a divisor of $q$ (the right hand side).

Since we are interested in bounding Eisenstein series of arbitrary weight, we may adjust the weight of the formula by applying the raising/lowering operators on both sides, to obtain
\[
E_{\mathfrak{a},n}(x,y,\theta,1/2 + it) = 
\prod_{p|q}(1-\dfrac{1}{p^{1 + 2it}})^{-1}\dfrac{\mu(v)}{(qv)^{1/2 + it}}\sum_{\beta | v}\sum_{\gamma | w}\dfrac{\mu(\beta \gamma)\beta^{1/2 + it}}{\gamma^{1/2 + it}}E_n(\beta\gamma x, \beta\gamma y, \theta, 1/2 + it).
\]
To see that this formula indeed holds, note that in $SL_2(\mathbb{R})$ coordinates, $\beta\gamma z$ can be written as
\[
\begin{pmatrix}
(\beta\gamma)^{1/2}& 0\\
0& (\beta\gamma)^{-1/2}
\end{pmatrix}
\begin{pmatrix}
1& x\\
0& 1
\end{pmatrix}
\begin{pmatrix}
y^{1/2}& 0\\
0& y^{-1/2}
\end{pmatrix},
\]
i.e. multiplication by $\beta\gamma$ can be interpreted as a certain left translation. Since left and right translations commute, and since the raising operator is a limit of right translations (as a Lie derivative), it follows that the two translations commute. Therefore
\[
R^{n/2}E_0(\beta\gamma z, 1/2 + it) = E_{n}(\beta\gamma x, \beta\gamma y, \theta, 1/2 + it),
\]
and similarly for the lowering operator.

Let $\epsilon > 0$ be arbitrary. Applying the triangle inequality, we obtain the bound
\[
|E_{\mathfrak{a},n}(x,y,\theta,1/2 + it)| \ll_{\epsilon} 
q^{\epsilon - 1/2}\max_{d|q}|E_n(d x, d y, \theta, 1/2 + it)|.
\]
To bound $|E_n(d x, d y, \theta, 1/2 + it)|$, recall the functional equation, satisfied by $\Tilde{E_n}(x,y,\theta,1/2 + it) = E_n(x, y, \theta, 1/2 + it)\zeta(1 + 2it)$ (see~\cite{FUNCTIONAL_EQUATION}[4.47, 4.48]),
\[
\Gamma(1/2 + it + |n|/2)\Tilde{E_n}(x,y,\theta,1/2 + it) = \Gamma(1/2 - it + |n|/2)\Tilde{E_n}(x,y,\theta,1/2 - it).
\]
The weight $n$ Eisenstein series is a generalized Dirichlet series, given by the formula
\[
\Tilde{E_n}(x,y,\theta,s) = e^{in\theta}y^s\sum_{(c,d)\neq (0,0)}\left(\dfrac{c\overline{z} + d}{|cz + d|}\right)^n\dfrac{1}{|cz + d|^{2s}},
\]
where $z = x + iy$. This formula is obtained by applying the raising/lowering operators term by term over $\Tilde{E_0}(x,y,\theta,s) = y^s\sum_{(c,d)\neq (0,0)}\dfrac{1}{|cz + d|^{2s}}$.

Using the approximate functional equation, as in~\cite{BLOMER_APPROX}[2.1] yields the following bound on $|\Tilde{E_n}(x,y,\theta,1/2 + it)|$:
\[
|\Tilde{E_n}(x,y,\theta,1/2 + it)| \ll_{\epsilon} 1 + y^{1/2}\sum_{\substack{(c,d)\neq (0,0)\\|cz + d|^2 \ll (1 + |t| + |n|)^{1+{\epsilon}}}}\dfrac{1}{|cz + d|}.
\]
Note that there are at most $O((1 + \sqrt{T}/y)\sqrt{T})$ choices for $(c, d)$ with $|cz + d|^2 \ll T$, and therefore
\[
|\Tilde{E_n}(x,y,\theta,1/2 + it)| \ll_{\epsilon} 1 + y^{1/2}\sum_{k: 2^k \ll (1 + |t| + |n|)^{1+{\epsilon}}}\sum_{\substack{(c,d)\neq (0,0)\\2^{k-1}\le |cz + d|^2 < 2^k}}\dfrac{1}{|cz + d|} \ll_{\epsilon} 1 + y^{1/2}\sum_{k: 2^k \ll (1 + |t| + |n|)^{1+{\epsilon}}}(1 + 2^{k/2}/y)
\]
\[
\ll_{\epsilon} 1 + y^{1/2}(1 + |t| + |n|)^{\epsilon} + y^{-1/2}(1 + |t| + |n|)^{1/2+{\epsilon}/2}.
\]

Recalling that $|\zeta(1 + 2it)| \gg 1/(1 + |t|)^{\epsilon}$ for $t\in \mathbb{R}$, and that $E_n(x,y,\theta,1/2 + it) = \Tilde{E_n}(x,y,\theta,1/2 + it)/\zeta(1 + 2it)$, we deduce that
\[
|E_n(x,y,\theta,1/2 + it)| \ll_{\epsilon} (y^{1/2} + y^{-1/2})(1 + |t| + |n|)^{1/2+{\epsilon}}.
\]
Plugging this into $|E_{\mathfrak{a},n}(x,y,\theta,1/2 + it)| \ll_{\epsilon} 
q^{\epsilon - 1/2}\max_{d|q}|E_n(d x, d y, \theta, 1/2 + it)|$, the theorem follows.
\end{proof}

\printbibliography
\end{document}